\documentclass[11pt]{amsart}
\usepackage{geometry}
\usepackage[dvips]{graphicx}
\usepackage{url}

\usepackage{amscd,amsfonts,amssymb,amsmath,amsthm,latexsym}

\usepackage{graphics}
\usepackage[all]{xy}
\xyoption{curve}
\xyoption{import}
\xyoption{arc}
\xyoption{ps}

\numberwithin{equation}{section}

\theoremstyle{plain}
    \newtheorem{thm}{Theorem}[section]
    \newtheorem{lemma}[thm]{Lemma}
    \newtheorem{coro}[thm]{Corollary}
    \newtheorem{prop}[thm]{Proposition}

\theoremstyle{definition}
    \newtheorem{defi}[thm]{Definition}
    \newtheorem{ex}[thm]{Example}
\theoremstyle{remark}
    \newtheorem{remark}[thm]{Remark}
\theoremstyle{remark}
    \newtheorem{case}{Case}
    \newtheorem{mainthmcase}{Case}


\newcommand{\suchthat}{\ | \ }
\newcommand{\marked}{\mathbb{M}}
\newcommand{\punct}{\mathbb{P}}
\newcommand{\surf}{(\Sigma,\marked)}
\newcommand{\surfnoM}{\Sigma}
\newcommand{\qtau}{Q(\tau)}
\newcommand{\stau}{S(\tau,\mathbf{x})}
\newcommand{\qstau}{(\qtau,\stau)}
\newcommand{\unredqtau}{\widehat{Q}(\tau)}
\newcommand{\unredstau}{\widehat{S}(\tau,\mathbf{x})}
\newcommand{\unredqstau}{(\unredqtau,\unredstau)}
\newcommand{\wtau}{W(\tau,\mathbf{x},i,j)}
\newcommand{\qwtau}{(\qtau,\wtau)}
\newcommand{\qsigma}{Q(\sigma)}
\newcommand{\unredqsigma}{\widehat{Q}(\sigma)}
\newcommand{\ssigma}{S(\sigma,\mathbf{x})}
\newcommand{\wsigma}{W(\sigma,\mathbf{x},i,j)}

\newcommand{\qssigma}{(\qsigma,\ssigma)}
\newcommand{\qwsigma}{(\qsigma,\wsigma)}

\newcommand{\arc}{i}

\newcommand{\tagfunction}{\mathfrak{t}}
\newcommand{\arcsinsurf}{\mathbf{A}^\circ\surf}
\newcommand{\taggedinsurf}{\mathbf{A}^{\bowtie}\surf}

\newcommand{\maxid}{\mathfrak{m}}
\newcommand{\idealM}{\maxid}
\newcommand{\completeRQ}{\RA {Q}}
\newcommand{\depth}{\operatorname{depth}}
\newcommand{\field}{K}

\newcommand{\short}{\operatorname{short}}
\newcommand{\length}{\ell}

\newcommand{\RA}[1]{R\langle\hspace{-0.05cm}\langle #1\rangle\hspace{-0.05cm}\rangle}

\makeatletter
\def\blfootnote{\xdef\@thefnmark{}\@footnotetext}
\makeatother

\setlength{\textheight}{595pt}
\addtolength{\voffset}{-10pt}
\addtolength{\textheight}{55pt}
\addtolength{\textwidth}{35pt}
\addtolength{\evensidemargin}{-40pt}
\addtolength{\headsep}{10pt}



\begin{document}

\title[QPs associated to triangulated surfaces IV]{Quivers with potentials associated to triangulated surfaces, Part IV: Removing boundary assumptions}
\author{Daniel Labardini-Fragoso}
\address{Instituto de Matem\'aticas, Universidad Nacional Aut\'onoma de M\'exico}
\email{labardini@matem.unam.mx}
\subjclass[2010]{05E99, 13F60, 16G20.}
\keywords{Tagged triangulation, flip, pop, quiver with potential, mutation, right-equivalence, non-degeneracy, cluster algebra.}
\dedicatory{Dedicated to the memory of Professor Andrei Zelevinsky}
\maketitle

\begin{abstract} We prove that the quivers with potentials associated to triangulations of surfaces with marked points, and possibly empty boundary, are non-degenerate, provided the underlying surface with marked points is not a closed sphere with exactly 5 punctures. This is done by explicitly defining the QPs that correspond to tagged triangulations and proving that whenever two tagged triangulations are related by a flip, their associated QPs are related by the corresponding QP-mutation. As a byproduct, for (arbitrarily punctured) surfaces with non-empty boundary we obtain a proof of the non-degeneracy of the associated QPs which is independent from the one given by the author in the first paper of the series.

The main tool used to prove the aforementioned compatibility between flips and QP-mutations is what we have called \emph{Popping Theorem}, which, roughly speaking, says that an apparent lack of symmetry in the potentials arising from ideal triangulations with self-folded triangles can be fixed by a suitable right-equivalence.
\end{abstract}

{\small\tableofcontents}\blfootnote{This is the final version. The version published by \textsl{Selecta Mathematica (New Series)} contains a few minor redaction errors that were introduced during the final edition process. E.g., ``related by a flip'' and ``related by a QP-mutation'' were incorrectly replaced by ``related to a flip'' and ``related to a QP-mutation'' in the published version.}

\section{Introduction}

The quivers with potentials associated in \cite{Labardini1} to ideal triangulations of arbitrarily punctured surfaces, and independently in \cite{ABCP} in the particular case of unpunctured surfaces, have had appearances both in Mathematics (cf. for example \cite{Bridgeland-Smith}, \cite{Nagao1}, \cite{Nagao2}) and Physics (cf. for example \cite{ACCERV1}, \cite{ACCERV2}, \cite{Cecotti}, \cite{Xie}), with the compatibility between ideal flips and QP-mutations proved in \cite{Labardini1} appearing as well.

The definition of the aforementioned QPs is quite simple-minded: every ``\emph{sufficiently nice}" ideal triangulation possesses two obvious types of cycles on its signed adjacency quiver, and the associated potential just adds them up. Simplicity has come with a price though, namely, a considerable amount of algebraic-combinatorial computations in order to prove statements that are otherwise quite natural (cf. for example \cite{Labardini1}). The price is paid once more in the present note, where to each tagged triangulation $\tau$ of a surface with marked points $\surf$ we associate a QP $\qstau$, and prove that whenever two tagged triangulations $\tau$ and $\sigma$ are related by the flip of a tagged arc $i$, the QPs $\mu_i\qstau$ and $\qssigma$ are right-equivalent, provided $\surf$ is not a closed sphere with exactly 5 punctures. As a direct consequence of this \emph{flip $\leftrightarrow$ QP-mutation compatibility}, we establish the non-degeneracy of all the QPs associated to (tagged) triangulations (except in the case just mentioned).

Let us describe the contents of the paper in more detail. In Section \ref{sec:background} we give some background concerning homomorphisms between complete path algebras (Subsection \ref{subsec:automorphisms}) and signatures of tagged triangulations (Subsection \ref{subsec:signatures}). In Section \ref{sec:definition-of-Stau} we define a potential $\stau$ for each tagged triangulation $\tau$ of a punctured surface with respect to a choice $\mathbf{x}=(x_p)_{p\in\punct}$ of non-zero scalars for the punctures of the surface, and then recall \cite[Theorem 30]{Labardini1}, stated as Theorem \ref{thm:ideal-flips<->mutations} below, which says that flips between ideal triangulations are always compatible with QP-mutations.

In the somewhat intuitive Section \ref{sec:folded-sides-the-problem}, we use an explicit example to illustrate a general problem that arises when trying to prove that flips of folded sides of ideal triangulations are compatible with QP-mutations. In Section \ref{sec:def-of-popped-potentials} we go back to formal considerations. There, for each ideal triangulation $\tau$ having a self-folded triangle we define a \emph{popped potential} $\wtau$ as the result of applying an obvious quiver automorphism, induced by the self-folded triangle, to the potential associated to $\tau$ with respect to another choice $\mathbf{y}=(y_p)_{p\in\punct}=((-1)^{\delta_{p,q}}x_p)$ of non-zero scalars, where $q$ is the punctured enclosed in the self-folded triangle (thus in this paper the signs of the scalars attached to the punctures will play some role, at least in the absence of boundary). Based on Theorem \ref{thm:ideal-flips<->mutations} we then show Lemma \ref{lemma:pop-commutes-with-mutation}, which says that as long as none of the two arcs contained in a fixed self-folded triangle is ever flipped, the popped potentials $\wtau$ associated to the ideal triangulations containing the fixed self-folded triangle have the same flip/mutation dynamics possessed by the potentials $\stau$. That is, if two ideal triangulations share a self-folded ideal triangle and are related by a flip, then their popped QPs are related by the corresponding QP-mutation.

Section \ref{sec:pop-is-re} is the technical core of the present work. Its Subsection \ref{subsec:Pop-Thm-statement} is devoted to state what is instrumentally the main result of the paper, Theorem \ref{thm:popping-is-right-equiv}, which we call the \emph{Popping Theorem} and says that if $\surf$ is not a closed sphere with less than 6 punctures, then for any ideal triangulation $\sigma$ with a self-folded triangle, the pop in $\sigma$ of such self-folded triangle induces right-equivalence, that is, the QP $\qssigma$ and the QP $\qwsigma$ with popped potential are right-equivalent (see \eqref{eq:pop-is-re-for-tau} below). Our first step towards the proof of Theorem \ref{thm:popping-is-right-equiv} makes use of Theorem \ref{thm:ideal-flips<->mutations} and Lemma \ref{lemma:pop-commutes-with-mutation} in order to reduce the alluded proof to showing the mere existence of a single ideal triangulation $\tau$ with a self-folded triangle whose pop in $\tau$ induces right-equivalence. The existence of such a $\tau$ is proved in Subsection \ref{subsec:proof-Pop-Thm-empty-boundary} for punctured surfaces with empty boundary that are different from a sphere with less than 7 punctures\footnote{For the sphere with exactly 6 punctures, the proof of existence of the alluded $\tau$ is omitted here, the reader can find such proof in the fourth arXiv version of this paper.}.

Having proved the Popping Theorem for surfaces with empty boundary, in Subsection \ref{subsec:proof-Pop-Thm-nonempty-boundary} we use restriction of QPs and gluing of disks along boundary components to deduce that it holds as well in the presence of boundary (cf. Proposition \ref{prop:pop-is-re-non-empty-boundary}). The key properties of restriction used are the facts that it preserves right-equivalences and takes potentials of the form $\stau$ to potentials of the same form and popped potentials to popped potentials.

Despite its somewhat technical proof, the Popping Theorem easily yields Theorem \ref{thm:leaving-positive-stratum}, the second main result of the paper, stated and proved in Section \ref{sec:leaving-pos-stratum}, which says that flips of folded sides of self-folded triangles are compatible with QP-mutations. Theorems \ref{thm:ideal-flips<->mutations} and \ref{thm:leaving-positive-stratum} are used in Section \ref{sec:flip<->mutations} to deduce our third main result, Theorem \ref{thm:tagged-flips<->mutations}, which states that arbitrary flips of tagged arcs are always compatible with QP-mutation, that is, that any two tagged triangulations related by a flip give rise to quivers with potentials related by the corresponding QP-mutation. Theorem \ref{thm:tagged-flips<->mutations} has the non-degeneracy of the QPs $\qstau$ as an immediate consequence. This is the fourth main result of the present note and is stated in Section \ref{sec:nondegeneracy} as Corollary \ref{coro:non-degenerate}.

In Section \ref{sec:irrelevant} we show that for surfaces with non-empty boundary, the scalars $x_p$ attached to the punctures $p\in\punct$, and more importantly, the signs in the potentials $\stau$ arising from the weak signatures $\epsilon_\tau$, are irrelevant; that is, for any two choices $\mathbf{x}=(x_p)_{p\in\punct}$ and $\mathbf{y}=(y_p)_{p\in\punct}$ of non-zero scalars, the QPs $\qstau$ and $(\qtau,S(\tau,\mathbf{y}))$ are right-equivalent. This implies that the QPs defined in \cite{CI-LF} for surfaces with non-empty boundary are right-equivalent to the ones defined here.

Finally, in Section \ref{sec:Jacobi-finiteness} we state recent results, proved independently by Ladkani (cf. \cite{Ladkani}) and Trepode--Valdivieso-D\'{i}az (cf. \cite{TV}), that answer a question of the author on the Jacobi-finiteness of the QPs associated to ideal triangulations of surfaces with empty boundary.

We remind the reader that starting in \cite{Labardini1} we have decided not to work with the situation where $\surf$ is an unpunctured or once-punctured monogon or digon, an unpunctured triangle, or a closed sphere with less than five punctures (though the 4-punctured sphere has proven important both in cluster algebra theory and representation theory, cf. \cite{BG}). In the remaining situations, up to now,
potentials have been defined for all ideal triangulations of arbitrarily-punctured surfaces, regardless of emptiness or non-emptiness of the boundary (cf. \cite{Labardini1}; and in the unpunctured non-empty boundary case, \cite{ABCP}), and for all tagged triangulations of arbitrarily-punctured surfaces with non-empty boundary (cf. \cite{CI-LF}), but not for non-ideal tagged triangulations of (necessarily punctured) surfaces with empty boundary. Also, non-degeneracy has been shown for all arbitrarily-punctured surfaces with non-empty boundary and for all empty-boundary positive-genus surfaces with exactly one puncture (cf. \cite{Labardini1}), but not for empty-boundary surfaces with more than one puncture. Furthermore, the compatibility between flips and QP-mutations has not been shown in general for arbitrary flips of tagged triangulations, but only for flips between ideal triangulations (with no extra assumption on the boundary, cf. \cite{Labardini1}), and for flips occurring inside certain subsets $\bar{\Omega}'_\epsilon$ of Fomin-Shapiro-Thurston's \emph{closed strata} $\bar{\Omega}_\epsilon$ (with the assumption of non-empty boundary, cf. \cite{CI-LF}).

Hence, the results presented here strongly improve the results obtained so far regarding both the non-degeneracy question, and the compatibility between flips of tagged triangulations and mutations of quivers with potentials. Indeed, here we give the definition of a potential for any tagged triangulation of any surface (definition completely missing in \cite{Labardini1}, and missing in \cite{CI-LF} for surfaces with empty boundary), and prove the desired compatibility between QP-mutations and flips of tagged triangulations for all surfaces but the 5-punctured sphere (a strong improvement of
\cite[Theorem 4.4 and Corollary 4.9]{CI-LF}). As a byproduct, we obtain a direct proof of the non-degeneracy of the QPs $\qstau$ for all surfaces $\surf$ different from the 5-punctured sphere, with no assumptions on the possible emptiness of the boundary of $\Sigma$ (in contrast to \cite{Labardini1}, where, as we said in the previous paragraph, non-degeneracy was shown only for surfaces with non-empty boundary and for positive-genus empty-boundary surfaces with exactly one punctures).
Furthermore, the proof given here of the non-degeneracy of
the QPs arising from ideal triangulations of surfaces with non-empty boundary is independent from, and more direct than, the one given in
\cite{Labardini1},
precisely because we calculate the potentials corresponding to tagged triangulations and do not appeal to a proof via rigidity.

Some words on the background needed to understand the statements in this note: Since detailed background sections on Derksen-Weyman-Zelevinsky's QP-mutation theory and Fomin-Shapiro-Thurston's development of surface cluster algebras have been included in \cite{Labardini1} and \cite{CI-LF}, we have decided not to include similar sections here. The reader unfamiliar with the combinatorial and algebraic background from tagged triangulations and quivers with potentials not provided here is kindly asked to look at \cite{DWZ1}, \cite{FST}, \cite{Zelevinsky-Oberwolfach}, \cite[Section 2]{Labardini1} or \cite[Sections 2 and 3]{CI-LF}.

\section*{Acknowledgements}

I am sincerely grateful to Tom Bridgeland for his interest in this work and many very pleasant discussions. I thank him as well for informing me of Gaiotto-Moore-Neitzke's use of the term ``pop" (cf. \cite[Pages 11-12, and Sections 5.6 and 5.8]{GMN}) for the analogue in Geometry and Physics of what was called ``swap" in a first version of this paper.

Thanks are owed to the anonymous referee for a number of useful suggestions.

I started considering the problem of compatibility between flips and QP-mutations when I was a PhD student at Northeastern University (Boston, MA, USA). As such, I profitted from numerous discussions with Jerzy Weyman and Andrei Zelevinsky. I am deeply grateful to both of them for their teachings.

Professor Andrei Zelevinsky unfortunately passed away a few weeks before this paper was submitted. No words can express my gratitude towards him for all the teachings, guidance and encouragement I constantly received from him during my Ph.D. studies and afterwards. My admiration for his way of doing Mathematics will always be of the deepest kind.

\section{Algebraic and combinatorial background}\label{sec:background}

\subsection{Automorphisms of complete path algebras}\label{subsec:automorphisms}

Here we briefly recall some basic facts and definitions concerning homomorphisms between complete path algebras.

The following proposition tells us that in order to have an $R$-algebra homomorphism between complete path algebras, it is sufficient to send each arrow $a$ to a (possibly infinite) linear combination of paths with the same starting and ending points as $a$. It also gives us a criterion to decide whether a given $R$-algebra homomorphism is an isomorphism using basic linear algebra.

\begin{prop}[{\cite[Proposition 2.4]{DWZ1}}] Let $Q$ and $Q'$ be quivers on the same vertex set, and let their respective arrow spans be $A$ and $A'$. Every pair $(\varphi^{(1)},\varphi^{(2)})$ of $R$-$R$-bimodule homomorphisms $\varphi^{(1)}:A\rightarrow A'$, $\varphi^{(2)}:A\rightarrow\idealM(Q')^2$, extends uniquely to a continuous $R$-algebra homomorphism $\varphi:\completeRQ\rightarrow \RA{ Q'}$ such that $\varphi|_A=(\varphi^{(1)},\varphi^{(2)})$.
Furthermore, $\varphi$ is $R$-algebra isomorphism if and only if $\varphi^{(1)}$ is an $R$-$R$-bimodule isomorphism.
\end{prop}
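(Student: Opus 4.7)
The plan is to exploit the universal property of the free path algebra together with completeness of $\completeRQ$ and $R\langle\langle Q'\rangle\rangle$ in the $\idealM$-adic topology. First I would define $\varphi$ on generators by declaring $\varphi|_R=\mathrm{id}_R$ and $\varphi(a)=\varphi^{(1)}(a)+\varphi^{(2)}(a)$ for every arrow $a$ of $Q$. Because $\varphi^{(1)}$ and $\varphi^{(2)}$ are $R$-bimodule homomorphisms, these assignments respect the idempotent structure (every arrow is sent to an element with the correct source and target), so they extend uniquely by multiplicativity to an $R$-algebra homomorphism from the free path algebra $R\langle Q\rangle$ into $R\langle\langle Q'\rangle\rangle$.

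The next step is to promote this to a continuous map on the completion. The key observation is that $\varphi(\idealM(Q))\subseteq\idealM(Q')$ and in fact $\varphi(a)\equiv\varphi^{(1)}(a)\pmod{\idealM(Q')^2}$, so by multiplicativity $\varphi(\idealM(Q)^n)\subseteq\idealM(Q')^n$ for every $n\geq 1$. Consequently, for any element $\sum_{p}\alpha_p p\in\completeRQ$ the partial sums form a Cauchy sequence whose images under $\varphi$ are Cauchy in $R\langle\langle Q'\rangle\rangle$, hence convergent; this defines $\varphi$ on $\completeRQ$. Uniqueness is immediate: any continuous $R$-algebra extension is forced to agree on the topological generators $R$ and $A$, and hence everywhere.

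For the isomorphism criterion, the forward direction is essentially formal. The $\idealM$-adic filtration is preserved by $\varphi$, so $\varphi$ induces a graded map $\mathrm{gr}(\varphi)$ on associated graded algebras. Its degree-one component is exactly $\varphi^{(1)}$, because $\varphi^{(2)}$ takes values in $\idealM(Q')^2$ and therefore vanishes modulo $\idealM(Q')^2$. If $\varphi$ itself is an isomorphism, then so is $\mathrm{gr}(\varphi)$ in each degree, forcing $\varphi^{(1)}$ to be a bimodule isomorphism.

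The harder direction is the converse, and it is here that I expect the main technical work. Assuming $\varphi^{(1)}$ has inverse $\psi^{(1)}:A'\to A$, the strategy is to construct $\varphi^{-1}$ by successive approximation in the $\idealM$-adic topology. Concretely, I would establish surjectivity by solving $\varphi(x)=a'$ for each arrow $a'$ of $Q'$ via an iteration: set $x_1=\psi^{(1)}(a')$, so that $a'-\varphi(x_1)\in\idealM(Q')^2$; inductively, if $a'-\varphi(x_1+\cdots+x_n)\in\idealM(Q')^{n+1}$, lift the degree-$(n+1)$ residual through $\psi^{(1)}$ applied arrow-by-arrow to produce $x_{n+1}\in\idealM(Q)^{n+1}$. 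The series $x=\sum_n x_n$ then converges in $\completeRQ$ by completeness and satisfies $\varphi(x)=a'$, establishing surjectivity on arrows and thereby on all of $R\langle\langle Q'\rangle\rangle$. For injectivity, if $\varphi(u)=0$ with $u\neq 0$, one picks the largest $n$ with $u\in\idealM(Q)^n$ and observes that the image of $u$ in $\idealM(Q)^n/\idealM(Q)^{n+1}$ is a nonzero element killed by $\mathrm{gr}(\varphi)$ in degree $n$, contradicting that $\mathrm{gr}(\varphi)$ is an isomorphism in every degree once $\varphi^{(1)}$ is. The main obstacle is organizing this recursion cleanly so that the constructed inverse is well-defined and continuous; keeping the associated-graded perspective throughout makes both the argument and the bookkeeping transparent.
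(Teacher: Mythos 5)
The paper does not prove this proposition at all: it is quoted verbatim from \cite[Proposition 2.4]{DWZ1} as background, so there is no in-paper argument to compare yours with. Your proof is the standard one and is essentially correct; the extension-and-uniqueness part and the successive-approximation construction of a preimage of each arrow are exactly right. Two places need tightening. First, in the forward direction of the isomorphism criterion you assert that bijectivity of $\varphi$ forces $\mathrm{gr}(\varphi)$ to be bijective; this is not automatic, because $\varphi^{-1}$ is a priori only an abstract $R$-algebra homomorphism and is not known to preserve the $\maxid$-adic filtration. The fix: from $\varphi|_R=\mathrm{id}_R$, $\varphi(\maxid(Q))\subseteq\maxid(Q')$ and surjectivity one gets $\varphi(\maxid(Q))=\maxid(Q')$ (compare components in the decomposition $R\oplus\maxid(Q')$); then $\varphi(\maxid(Q)^k)=\maxid(Q')^k$ because, the arrow set being finite, $\maxid(Q)^k$ coincides with the algebraic $k$-fold product $\maxid(Q)\cdots\maxid(Q)$ (group a series of paths of length at least $k$ according to their initial subpaths of length $k-1$), and algebraic products of corresponding ideals are preserved by any ring isomorphism. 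Only after this does one know that $\varphi^{(1)}$, i.e.\ the induced map $\maxid(Q)/\maxid(Q)^2\to\maxid(Q')/\maxid(Q')^2$, is bijective. Second, ``surjectivity on arrows and thereby on all of $R\langle\langle Q'\rangle\rangle$'' is too quick: the image of a continuous map between complete spaces need not be closed, so hitting a dense subalgebra does not by itself give surjectivity. The clean repair is to note that your preimages $x_{a'}=\psi^{(1)}(a')+(\text{terms in }\maxid(Q)^2)$ are precisely the data to which the first half of the proposition applies, so they extend to a continuous $R$-algebra homomorphism $\psi$ with $\varphi\psi$ agreeing with the identity on $R$ and on all arrows; the uniqueness clause then gives $\varphi\psi=\mathrm{id}$, and together with your injectivity argument (which is fine) this yields the isomorphism.
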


\begin{defi}[{\cite[Definition 2.5]{DWZ1}}]\label{def:types-of-automorphisms} An $R$-algebra automorphism $\varphi:\completeRQ\rightarrow \RA{ Q}$ is said to be \begin{itemize}
\item \emph{unitriangular} if $\varphi^{(1)}$ is the identity of $A$;
\item \emph{of depth $\ell<\infty$} if $\varphi^{(2)}(A)\subseteq\maxid^{\ell+1}$, but $\varphi^{(2)}(A)\nsubseteq\maxid^{\ell+2}$;
\item \emph{of infinite depth} if $\varphi^{(2)}(A)=0$.
\end{itemize}
The depth of $\varphi$ will be denoted $\depth(\varphi)$.
\end{defi}

Thus for example, the identity is the only unitriangular automorphism of $\RA{ Q}$ that has infinite depth. Also, composition of unitriangular automorphisms is unitriangular.

For the following lemma, we use the convention that $\maxid^\infty=0$.

\begin{lemma}[{\cite[Equation (2.4)]{DWZ1}}]\label{lemma:depth-is-short-cycle-friendly} If $\varphi$ is a unitriangular automorphism of $\RA{ Q}$, then for every $n\geq 0$ and every $u\in\maxid^n$ we have $\varphi(u)-u\in\maxid^{n+\depth(\varphi)}$.
\end{lemma}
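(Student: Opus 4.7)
The plan is to reduce to the case of a single path and then expand, using that unitriangularity pins down the top-order behaviour of $\varphi$ on each arrow.

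Set $d=\depth(\varphi)$, so by definition $\varphi^{(2)}(A)\subseteq\maxid^{d+1}$, and since $\varphi$ is unitriangular, every arrow $a\in A$ satisfies
\[
\varphi(a)=a+\varphi^{(2)}(a)\in a+\maxid^{d+1}.
\]
Moreover, $\varphi$ is an $R$-algebra homomorphism, hence fixes $R$ pointwise. If $n=0$, write $u=r+u'$ with $r\in R$ and $u'\in\maxid$; then $\varphi(u)-u=\varphi(u')-u'$, so the $n=0$ case follows from the $n=1$ case together with the inclusion $\maxid^{1+d}\subseteq\maxid^{d}=\maxid^{n+d}$.

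Next I would establish the statement for a single path. Let $p=a_1a_2\cdots a_n$ with $n\geq 1$ and expand
\[
\varphi(p)=\prod_{i=1}^{n}\bigl(a_i+\varphi^{(2)}(a_i)\bigr)=p+\sum,
\]
where the sum ranges over the $2^n-1$ nonempty subsets $I\subseteq\{1,\ldots,n\}$ corresponding to the factors where $\varphi^{(2)}(a_i)$ is chosen. Each such term is a product of $n$ factors, all lying in $\maxid$, with at least one factor (indexed by some $i\in I$) lying in $\maxid^{d+1}$; hence the term lies in $\maxid^{(n-|I|)+|I|(d+1)}\subseteq\maxid^{n+d}$. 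Therefore $\varphi(p)-p\in\maxid^{n+d}$.

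Finally, since $\maxid^n$ is the closure of the $R$-span of paths of length $\geq n$, every $u\in\maxid^n$ is a (possibly infinite) $R$-linear combination of such paths. Both $\varphi$ and the identity are continuous $R$-bimodule maps, and $\maxid^{n+d}$ is closed in the $\maxid$-adic topology on $\completeRQ$; so the path-level conclusion propagates to arbitrary $u\in\maxid^n$. The convention $\maxid^{\infty}=0$ matches the infinite-depth case, where $\varphi^{(2)}=0$ forces $\varphi$ to fix every arrow and hence, by continuity and multiplicativity, every element of $\completeRQ$. No real obstacle arises; the only care needed is the bookkeeping of orders in the expansion above and the routine appeal to continuity in the last step.
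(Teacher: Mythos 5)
Your proof is correct. The paper itself does not prove this lemma---it is quoted verbatim from \cite[Equation (2.4)]{DWZ1}---and your argument (writing $\varphi(a)=a+\varphi^{(2)}(a)$ with $\varphi^{(2)}(a)\in\maxid^{d+1}$, expanding $\varphi$ on a path factor by factor so that every cross term lands in $\maxid^{n+|I|d}\subseteq\maxid^{n+d}$, and then passing to arbitrary elements of $\maxid^n$ by continuity of $\varphi-\operatorname{id}$ and closedness of $\maxid^{n+d}$, with the $n=0$ and infinite-depth cases handled separately) is exactly the standard verification that the citation suppresses.
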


\begin{lemma}\label{lemma:well-defined-limit-automorphism} Let $Q$ be any
quiver, and $(\psi_n)_{n>0}$ be a sequence of unitriangular $R$-algebra
automorphisms of $\RA{ Q}$. Suppose that
$\lim_{n\to\infty}\depth(\psi_n)=\infty$. Then the limit
\[
\psi=\lim_{n\to\infty}\psi_n\psi_{n-1}\ldots \psi_2\psi_1
\]
is a well-defined  unitriangular $R$-algebra automorphism of $\RA{ Q}$.
If, moreover, $S$ and $(S_n)_{n>0}$ are respectively a potential and a
sequence of potentials on $Q$, satisfying $\lim_{n\to\infty}S_n=0$ and such that
$\psi_n$ is a right-equivalence $(Q,S+S_n)\to(Q,S+S_{n+1})$ for all $n>0$, then
$\psi=\lim_{n\to\infty}\psi_n\psi_{n-1}\ldots \psi_2\psi_1$
is a right-equivalence $(Q,S+S_1)\to(Q,S)$.
\end{lemma}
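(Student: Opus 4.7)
The plan is to split the proof into three stages: (i) show that for every $u\in\completeRQ$ the sequence $\Psi_n(u)$, with $\Psi_n:=\psi_n\circ\psi_{n-1}\circ\cdots\circ\psi_1$, converges in the $\maxid$-adic topology, so that $\psi(u):=\lim_{n\to\infty}\Psi_n(u)$ is well defined; (ii) verify that $\psi$ so defined is a unitriangular $R$-algebra automorphism of $\completeRQ$; (iii) identify $\psi$ as a right-equivalence $(Q,S+S_1)\to(Q,S)$ by a telescoping argument.

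For (i), I would first observe that a composition of unitriangular automorphisms is unitriangular, so each $\Psi_{n}$ preserves the $\maxid$-adic filtration. Fixing $u\in\maxid^m$ and using that $\Psi_{n-1}(u)\in\maxid^m$, Lemma \ref{lemma:depth-is-short-cycle-friendly} applied to the unitriangular automorphism $\psi_n$ gives
\[
\Psi_n(u)-\Psi_{n-1}(u)=\psi_n(\Psi_{n-1}(u))-\Psi_{n-1}(u)\in\maxid^{m+\depth(\psi_n)}.
\]
Since $\depth(\psi_n)\to\infty$, the sequence $(\Psi_n(u))_{n>0}$ is Cauchy in the $\maxid$-adic topology, hence convergent by completeness of $\completeRQ$. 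A general $u$ can be handled either by decomposing $u=u_{<N}+u_{\geq N}$ with $u_{\geq N}\in\maxid^N$ and using the bound above uniformly in $N$, or by defining $\psi$ first on arrows $a\in A$ via $\psi(a):=\lim_n\Psi_n(a)$ (which exists by the same estimate with $m=1$) and then extending via Proposition \cite[Prop.~2.4]{DWZ1}; the two candidates agree on dense subspaces and continuity pins them down on all of $\completeRQ$.

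For (ii), the map $\psi$ is $R$-bilinear and multiplicative because algebra operations are continuous in the $\maxid$-adic topology and each $\Psi_n$ is an $R$-algebra homomorphism. Moreover, for each arrow $a\in A$, the bound $\Psi_n(a)-a\in\maxid^2$ (valid for every $n$, by unitriangularity of $\Psi_n$) passes to the limit and yields $\psi(a)-a\in\maxid^2$, i.e.\ $\psi^{(1)}=\mathrm{id}_A$. By Proposition \cite[Prop.~2.4]{DWZ1}, $\psi$ is an $R$-algebra automorphism, and by construction it is unitriangular.

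For (iii), iterating the hypothesis $\psi_n(S+S_n)=S+S_{n+1}$ produces the telescoping identity $\Psi_n(S+S_1)=S+S_{n+1}$. Taking the limit and using both the pointwise convergence established in (i) (applied to $u=S+S_1$) and the assumption $\lim_{n\to\infty}S_n=0$, we conclude $\psi(S+S_1)=S$, as desired. The only genuine obstacle is the bookkeeping in step (i): one must ensure that pointwise convergence holds for \emph{arbitrary} elements of $\completeRQ$ (not merely elements of some $\maxid^m$ or finite-length paths) and that the limit map is continuous; all of this, however, follows cleanly from Lemma \ref{lemma:depth-is-short-cycle-friendly} together with the completeness of the $\maxid$-adic topology, with no subtle ingredient beyond them.
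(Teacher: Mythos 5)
Your steps (i) and (ii) are sound and essentially coincide with the paper's argument: the paper regroups the $\psi_n$ into blocks $\varphi_n$ of strictly increasing depth and checks that each degree-$d$ component of $\Psi_n(u)$ stabilizes, while you phrase the same estimate from Lemma \ref{lemma:depth-is-short-cycle-friendly} as a Cauchy condition and invoke completeness of $\completeRQ$; these are equivalent, and your route avoids the block bookkeeping.

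The gap is in step (iii). A right-equivalence $(Q,S+S_n)\to(Q,S+S_{n+1})$ does \emph{not} mean $\psi_n(S+S_n)=S+S_{n+1}$; by definition it means only that $\psi_n(S+S_n)$ is \emph{cyclically equivalent} to $S+S_{n+1}$. Hence the telescoping identity $\Psi_n(S+S_1)=S+S_{n+1}$ on which your conclusion rests is not available. What one can extract from the hypothesis (after also noting that each $\psi_k$, being a continuous $R$-algebra homomorphism fixing $R$, sends cyclic differences to cyclic differences) is only that $\Psi_n(S+S_1)-(S+S_{n+1})$ lies in the subspace $V\subseteq\completeRQ$ defined as the topological closure of the span of the elements $\alpha_1\alpha_2\ldots\alpha_d-\alpha_2\ldots\alpha_d\alpha_1$. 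To conclude that $\psi(S+S_1)$ is cyclically equivalent to $S$ you must then pass this membership to the limit, which requires $V$ to be a closed subspace; this is precisely what the second half of the paper's proof supplies and what is missing from yours. As written, your final sentence both assumes a hypothesis stronger than the one given (equality of potentials rather than cyclic equivalence) and asserts a conclusion in the wrong form (equality $\psi(S+S_1)=S$ rather than cyclic equivalence), so the right-equivalence claim does not yet follow.
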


\begin{proof} We can suppose, without loss of generality, that
$\depth(\psi_n)<\infty$ for all $n>0$ (this is because the only unitriangular
automorphism of $\RA{ Q}$ that has infinite depth is
the identity). Since $\lim_{n\to\infty}\depth(\psi_n)=\infty$, there exists
$m_2>0$ such that $\depth(\psi_k)>\depth(\psi_1)$ for all $k\geq m_2$. And
once we have a positive integer $m_n$, we can find $m_{n+1}>m_n$ such that
$\depth(\psi_k)>\depth(\psi_{m_n})$ for all $k\geq m_n$. Set
$\varphi_n=\psi_{m_{n}}\psi_{m_{n}-1}\ldots\psi_{m_{n-1}+1}$, with the convention
that $m_1=1$ and $m_0=0$. Then $(\depth(\varphi_n))_{n>0}$ is an increasing
sequence of positive integers.

The limit
$\lim_{n\to\infty}\varphi_n\varphi_{n-1}\ldots \varphi_2\varphi_1$ is well-defined if and only if so is the
limit $\lim_{n\to\infty}\psi_{n}\psi_{n-1}\ldots \psi_2\psi_1$, and if both of these limits are well-defined, then they are equal. In order to show
that $\varphi=\lim_{n\to\infty}\varphi_n\varphi_{n-1}\ldots \varphi_2\varphi_1$ is
well-defined it suffices to show that for every $u\in\maxid$ and every $d>0$
the sequence $(u_n^{(d)})_{n>0}$ formed by the degree-$d$ components of the
elements $u_n=\varphi_n\ldots\varphi_1(u)$ eventually stabilizes as
$n\to\infty$. Notice that $\depth(\varphi_n)\geq n-1$ for all $n>0$. From this
and Lemma \ref{lemma:depth-is-short-cycle-friendly} we deduce that, for a
given $u\in\maxid$, there exists a sequence $(v_n)_{n>0}$ such that
$v_n\in\maxid^n$ and $u_n=u+\sum_{j=1}^nv_j$ for all $n>0$. From this we see that
$u_n^{(d)}=u_{n+1}^{(d)}$ for $n>d$, so the sequence $(u_n^{(d)})_{n>0}$ stabilizes.

For the second statement of the lemma, let $V$ be the topological closure of
the $\field$-vector subspace of $\RA{ Q}$ generated by all
elements of the form $\alpha_1\alpha_2\ldots\alpha_d-\alpha_2\ldots\alpha_d\alpha_1$
with $\alpha_1\ldots \alpha_d$ a
cycle on $Q$. Then $V$ is a $\field$-vector subspace of
$\RA{ Q}$ as well and two potentials are
cyclically-equivalent if and only if their difference belongs to $V$.

Compositions of right-equivalences is again a right-equivalence, hence
$\varphi_n$ is a right-equivalence $(Q,S+S_{m_{n}})\to(Q,S+S_{m_{n+1}})$ for all
$n>0$. We deduce that
\[
\varphi_n\varphi_{n-1}\ldots\varphi_2\varphi_1(S+S_1)-(S+S_{m_{n+1}})\in V.
\]
The fact that both sequences
$(\varphi_n\varphi_{n-1}\ldots\varphi_2\varphi_1(S+S_1))_{n>0}$ and
$(-(S+S_{m_{n+1}}))_{n>0}$ are convergent in $\RA{ Q}$
implies that
\[
(\varphi_n\varphi_{n-1}\ldots\varphi_2\varphi_1(S+S_1)-(S+S_{m_{n+1}}))_{n>0}
\]
converges as well. Since $V$ is closed, we have
$$
\varphi(W+S_1)-W=
\lim_{n\to\infty}(\varphi_n\varphi_{n-1}\ldots
\varphi_2\varphi_1(S+S_1)-(S+S_{m_{n+1}}))\in V.
$$
This finishes the proof of Lemma~\ref{lemma:well-defined-limit-automorphism}.
\end{proof}

\subsection{Signatures and weak signatures of tagged triangulations}\label{subsec:signatures}

In this subsection we review some elementary facts concerning tagged triangulations, their signatures, and the behavior of the latter ones under flips.

\begin{defi}\label{def:surf-with-marked-points} A \emph{bordered surface with marked points}, or simply a \emph{surface}, is a pair $\surf$, where $\surfnoM$ is a compact connected oriented Riemann surface with (possibly empty) boundary, and $\marked$ is a finite set of points on $\surfnoM$, called \emph{marked points}, such that $\marked$ is non-empty and has at least one point from each connected component of the boundary of $\surfnoM$. The marked points that lie in the interior of $\surfnoM$ are called \emph{punctures}, and the set of punctures of $\surf$ is denoted $\punct$. Throughout the paper we will always assume that $\surf$ is none of the following:
\begin{itemize}
\item a sphere with less than four punctures;
\item an unpunctured monogon, digon or triangle;
\item a once-punctured monogon or digon.
\end{itemize}
Here, by a monogon (resp. digon, triangle) we mean a disk with exactly one (resp. two, three) marked point(s) on the boundary.
\end{defi}

For the definitions of the following concepts we kindly ask the reader to look at the corresponding reference:
\begin{itemize}
\item ordinary arc \cite[Definition 2.2]{FST};
\item compatibility of pairs of ordinary arcs \cite[Definition 2.4]{FST};
\item ideal triangulation \cite[Definition 2.6]{FST};
\item tagged arc \cite[Defintion 7.1 and Remark 7.3]{FST};
\item compatibility of pairs of tagged arcs \cite[Definition 7.4 and Remark 7.5]{FST};
\item tagged triangulation \cite[Page 111]{FST}.
\end{itemize}

The set of ordinary arcs, taken up to isotopy relative to $\marked$, is denoted by $\arcsinsurf$, while the set of tagged arcs is denoted by $\taggedinsurf$.

\begin{defi}\label{def:types-of-ideal-triangles} Let $\tau$ be an ideal triangulation of a surface $\surf$.
\begin{enumerate}\item For each connected component of the complement in $\surfnoM$ of the union of the arcs in $\tau$, its topological closure $\triangle$ will be called an \emph{ideal triangle} of $\tau$.
\item An ideal triangle $\triangle$ is called \emph{interior} if its intersection with the boundary of $\surfnoM$ consists only of (possibly none) marked points. Otherwise it will be called \emph{non-interior}.
\item An interior ideal triangle $\triangle$ is \emph{self-folded} if it contains exactly two arcs of $\tau$ (see Figure \ref{Fig:selffoldedtriang}).
\end{enumerate}
\end{defi}

        \begin{figure}[!h]
                \caption{Self-folded triangle}\label{Fig:selffoldedtriang}
                \centering
                \includegraphics[scale=.4]{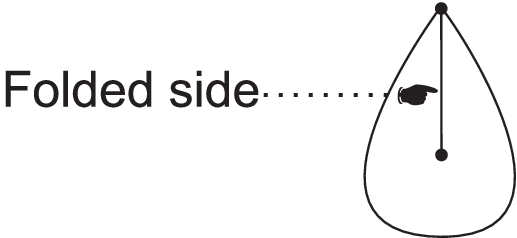}
        \end{figure}

All tagged triangulations of $\surf$
have the same cardinality and
every tagged arc $i$ in a tagged triangulation $\tau$ can be replaced by a uniquely defined, different tagged arc that together with the remaining tagged arcs from $\tau$ forms a tagged triangulation $\sigma$. This combinatorial replacement will be called \emph{flip}, written $\sigma=f_i(\tau)$. Furthermore, a sequence $(\tau_0,\ldots,\tau_\ell)$ of tagged triangulations will be called a \emph{flip-sequence} if $\tau_{k-1}$ and $\tau_k$ are related by a flip for $k=1,\ldots,\ell$. A flip-sequence will be called \emph{ideal flip-sequence} if it involves only ideal triangulations.

\begin{prop}
\label{prop:ideal-triangs-seqs-of-flips} Let $\surf$ be a surface.
\begin{itemize}\item Any two ideal triangulations $\tau$ and $\sigma$ of $\surf$ are members of an ideal flip-sequence. A flip sequence $(\tau,\tau_1,\ldots,\tau_{\ell-1},\sigma)$ can always be chosen in such a way that $\tau\cap\sigma\subseteq\tau_k$ for all $k=1,\ldots,\ell-1$.
\item There is at least one ideal triangulation of $\surf$ that does not have self-folded triangles.
\item Any two ideal triangulations without self-folded triangles are members of an ideal flip-sequence that involves only ideal triangulations without self-folded triangles.
\item If $\surf$ is not a once-punctured surface with empty boundary, then any two tagged triangulations are members of a flip-sequence.
\end{itemize}
\end{prop}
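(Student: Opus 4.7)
The plan is to address each bullet in turn, leaning on classical topological results recast in the cluster-algebraic setting by Fomin--Shapiro--Thurston~\cite{FST}.

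For the first bullet, the unrefined existence of a connecting flip-sequence is the standard connectedness of the flip graph of ideal triangulations, proved in \cite[Proposition 3.8, Theorem 3.10]{FST} (going back topologically to Harer--Mosher--Penner). For the refinement that every intermediate $\tau_k$ contains $\tau\cap\sigma$, I would proceed by induction on $|\tau\setminus\sigma|$ via a puzzle-piece decomposition: the arcs of $\tau\cap\sigma$ cut $\surfnoM$ into smaller marked surfaces on each of which $\tau\setminus\sigma$ and $\sigma\setminus\tau$ restrict to a pair of ideal triangulations agreeing only on the piece's boundary. Applying connectedness of the ideal flip graph piece by piece, and concatenating the resulting local flip-sequences in any order, produces a global flip-sequence in which the arcs of $\tau\cap\sigma$ are never touched. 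Pieces too small to admit any flip (unpunctured triangles, for example) contribute nothing to the concatenation.

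For the second bullet, I would build a self-folded-free triangulation by local surgery. A self-folded triangle of $\tau$ corresponds to a loop $\gamma\in\tau$ enclosing a puncture $p\in\punct$ of degree one; the loop $\gamma$ itself is not a folded side, so it admits an ordinary ideal flip that raises the degree of $p$ and destroys the self-folded triangle at $p$. Starting from an arbitrary ideal triangulation and iterating yields the desired triangulation; the standing exclusion of small cases in Definition~\ref{def:surf-with-marked-points} ensures the required flips are available. For the third bullet, I would first apply the first bullet to the two given non-self-folded triangulations, then detour around any intermediate $\tau_k$ that happens to contain a self-folded triangle, replacing the three-term subsequence $\tau_{k-1},\tau_k,\tau_{k+1}$ by a longer, self-folded-free subsequence using the same puzzle-piece idea applied to $\tau_{k-1}\cap\tau_{k+1}$.

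For the fourth bullet, I would invoke \cite[Proposition 7.10]{FST}: outside the once-punctured closed case, tagged triangulations of $\surf$ are in bijection with pairs consisting of an ideal triangulation and a signature $\punct\to\{+,-\}$, and combining the first bullet with the fact that a single tag change at a puncture can be realized by a flip of a tagged arc incident to that puncture settles the claim. The main obstacle is the rerouting in the third bullet and the puzzle-piece induction in the first: one must verify that the pieces and local modifications that arise are never so small as to obstruct the necessary flips, a bookkeeping that is tedious but elementary once the large-$\surf$ hypothesis from Definition~\ref{def:surf-with-marked-points} is fully exploited.
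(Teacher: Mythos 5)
The paper does not actually prove Proposition \ref{prop:ideal-triangs-seqs-of-flips}: it refers to \cite[Pages 36--41]{Mosher} for the first assertion, to \cite{FST} for the second and fourth, and to \cite[Proof of Corollary 6.7]{Labardini2} for the third. Your sketches for the first two bullets are sound and in line with the standard arguments: the puzzle-piece induction along $\tau\cap\sigma$ is the usual way to obtain the refinement in the first bullet, and flipping the enclosing loop of a self-folded triangle does strictly decrease the number of self-folded triangles (the two new triangles inside the once-punctured digon each have three distinct sides, and no triangle outside the digon is affected), so iterating proves the second bullet. For the fourth bullet your appeal to \cite{FST} is exactly what the paper does, though the gloss ``a single tag change at a puncture can be realized by a flip of a tagged arc incident to that puncture'' is not literally true: changing the signature at a puncture $p$ changes the tags of \emph{all} arcs incident to $p$ and requires a whole sequence of flips; this is precisely the content of the cited result, so nothing is lost, but the sentence should not be read as a proof.

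The genuine gap is in the third bullet. Your three-term detour presupposes that the intermediate triangulations containing self-folded triangles occur in isolation along the sequence furnished by the first bullet, which is false: a flip performed away from an existing self-folded triangle preserves it, so arbitrarily long consecutive runs of self-folded-containing triangulations can occur, and replacing a single $\tau_{k-1},\tau_k,\tau_{k+1}$ does not treat them. The proposed replacement is also circular: connecting the restrictions of $\tau_{k-1}$ and $\tau_{k+1}$ to the puzzle pieces \emph{through self-folded-free triangulations} is exactly the assertion being proved, now for the pieces. Finally, in the one case your detour does address it is unnecessary: if $\tau_k$ contains a self-folded triangle with folded side $i$ and enclosing loop $\ell$ while $\tau_{k-1}$ and $\tau_{k+1}$ contain none, then the arc of $\tau_k$ absent from $\tau_{k-1}$ must lie in $\{i,\ell\}$ (otherwise both $i$ and $\ell$, hence the self-folded triangle, survive in $\tau_{k-1}$), and it cannot be $i$ because the flip of a folded side does not produce an ideal triangulation; so $\tau_{k-1}=f_\ell(\tau_k)$, and by the same token $\tau_{k+1}=f_\ell(\tau_k)$, whence $\tau_{k-1}=\tau_{k+1}$ and the pair $\tau_k,\tau_{k+1}$ can simply be excised from the sequence. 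The real content of the third assertion is the handling of the long runs, for which you should follow \cite[Proof of Corollary 6.7]{Labardini2} rather than a local three-term repair.
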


The first assertion of Proposition \ref{prop:ideal-triangs-seqs-of-flips} is well known and has many different proofs, we refer the reader to \cite[Pages 36-41]{Mosher} for an elementary one. The second and fourth assertions of the proposition are proved in \cite{FST}. A proof of the third assertion can be found in \cite[Proof of Corollary 6.7]{Labardini2}.

Let us recall how to represent ideal triangulations with tagged ones and viceversa.

\begin{defi}[{\cite[Definitions 7.2 and 9.2]{FST}}]\label{def:tagfunction(ideal-arc)} Let $\epsilon:\punct\rightarrow\{-1,1\}$ be any function. We define a function $\tagfunction_\epsilon:\arcsinsurf\rightarrow\taggedinsurf$ that represents ordinary arcs by tagged ones as follows.
\begin{enumerate}\item If $\arc$ is an ordinary arc that is not a loop enclosing a once-punctured monogon, set $\arc$ to be the underlying ordinary arc of the tagged arc $\tagfunction_\epsilon(\arc)$. An end of $\tagfunction_\epsilon(\arc)$ will be tagged notched if and only if the corresponding marked point is an element of $\punct$ where $\epsilon$ takes the value $-1$.
\item If $j$ is a loop, based at a marked point $q$, that encloses a once-punctured monogon, being $p$ the puncture inside this monogon, then the underlying ordinary arc of $\tagfunction(j)$ is the arc that connects $q$ with $p$ inside the monogon. The end at $q$ will be tagged notched if and only if $q\in\punct$ and $\epsilon(q)=-1$, and the end at $p$ will be tagged notched if and only if $\epsilon(p)=1$.
\end{enumerate}
\end{defi}

\begin{ex} In Figure \ref{Fig:tagfunction_examples}
        \begin{figure}[!h]
                \caption{}\label{Fig:tagfunction_examples}
                \centering
                \includegraphics[scale=.35]{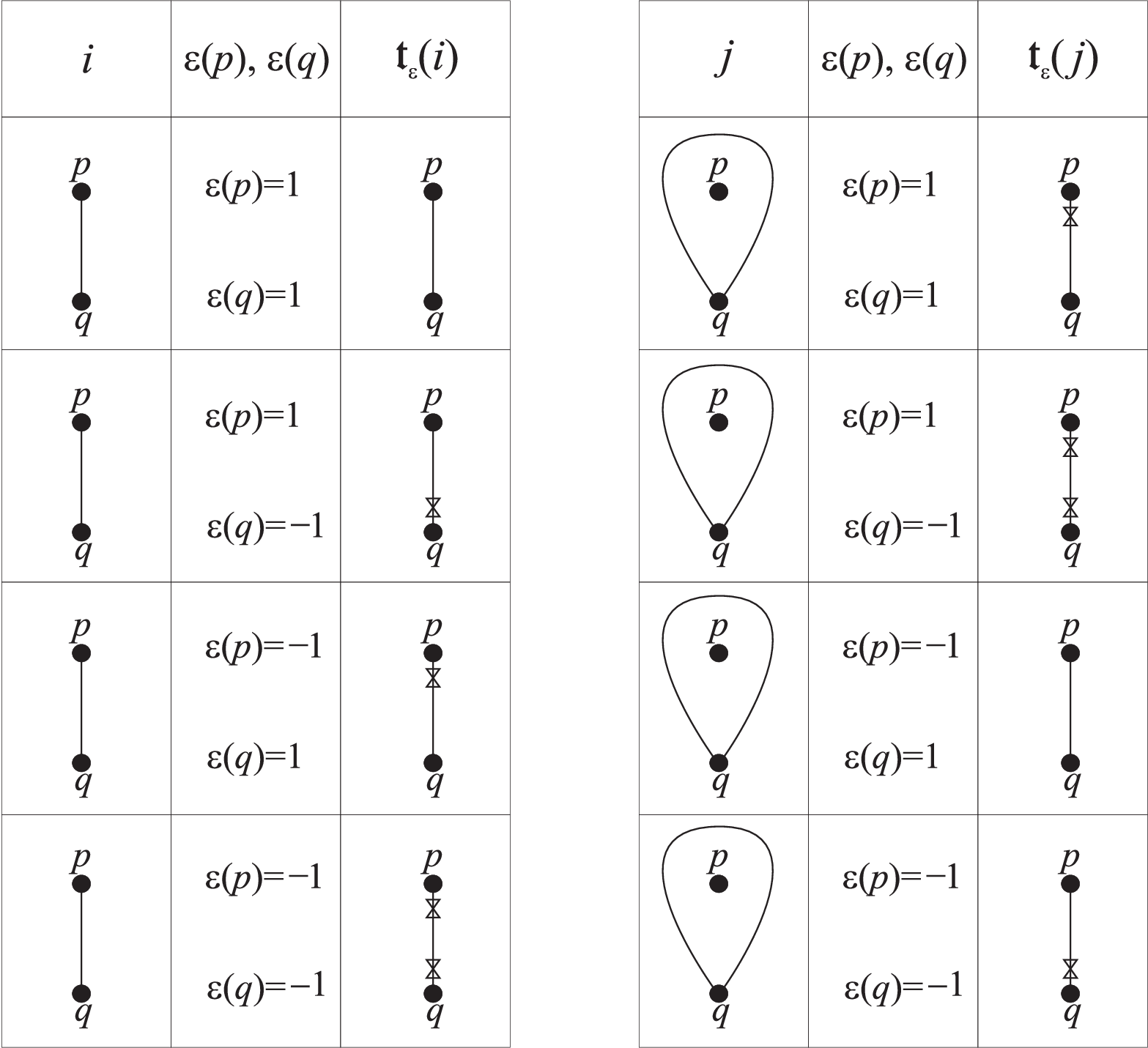}
        \end{figure}
we can see all possibilities for $\tagfunction_{\epsilon}(i)$ (resp. $\tagfunction_{\epsilon}(j)$) given an ordinary arc $i$ that does not cut out a once-punctured monogon (resp. a loop $j$ that cuts out a once-punctured monogon), and the values of $\epsilon$ at the endpoints of $i$ (resp. at the base-point of $j$ and the puncture enclosed by $j$). $\hfill{\blacktriangle}$
\end{ex}

To pass from tagged triangulations to ideal ones we need the notion of signature.

\begin{defi}\label{def:signature} Let $\tau$ be a tagged triangulation of $\surf$.
\begin{enumerate}
\item Following \cite[Definition 9.1]{FST}, we define the \emph{signature} of $\tau$ to be the function
$\delta_\tau:\punct\rightarrow\{-1,0,1\}$ given by
\begin{equation}
\delta_\tau(p)=
\begin{cases} 1 & \text{if all ends of tagged arcs in $\tau$ incident to $p$ are tagged plain;}\\
-1 & \text{if all ends of tagged arcs in $\tau$ incident to $p$ are tagged notched;}\\
0 & \text{otherwise.}
\end{cases}
\end{equation}
Note that if $\delta_\tau(p)=0$, then there are precisely two arcs in $\tau$ incident to $p$, the untagged versions of these arcs coincide and they carry the same tag at the end different from $p$.
\item The \emph{weak signature} of $\tau$ is the function $\epsilon_\tau:\punct\rightarrow\{-1,1\}$ defined by
\begin{equation}
\epsilon_\tau(p)=
\begin{cases} 1 & \text{if $\delta_\tau(p)\in\{0,1\}$;}\\
-1 & \text{otherwise.}
\end{cases}
\end{equation}
\end{enumerate}
\end{defi}

\begin{defi}[{\cite[Definition 9.2]{FST}}]\label{def:tau-circ} Let $\tau$ be a tagged triangulation of $\surf$. We replace each tagged arc in $\tau$ with an ordinary arc by means of the following rules:
\begin{enumerate}
\item delete all tags at the punctures $p$ with non-zero signature;
\item for each puncture $p$ with $\delta_\tau(p)=0$, replace the tagged arc $\arc\in\tau$ which is notched at $p$ by a loop enclosing $p$ and $\arc$.
\end{enumerate}
The resulting collection of ordinary arcs will be denoted by $\tau^\circ$. For $i\in\tau$ we will denote by $i^\circ$ the arc in $\tau^\circ$ that replaces $i$, although we stress the fact that $i^\circ$ really depends on $\tau$ and not on $i$ alone.
\end{defi}

The next proposition follows from the results in \cite[Subsection 9.1]{FST}.

\begin{prop}\label{prop:tagfunction-and-circ} Let $\surf$ be a surface.
\begin{enumerate}
\item For every function $\epsilon:\punct\rightarrow\{-1,1\}$, the function $\tagfunction_\epsilon:\arcsinsurf\to\taggedinsurf$ is injective and preserves compatibility. Thus, if $\arc_1$ and $\arc_2$ are compatible ordinary arcs, then $\tagfunction_\epsilon(\arc_1)$ and $\tagfunction_\epsilon(\arc_2)$ are compatible tagged arcs. Consequently, if $T$ is an ideal triangulation of $\surf$, then $\tagfunction_\epsilon(T)=\{\tagfunction_\epsilon(\arc)\suchthat\arc\in T\}$ is a tagged triangulation of $\surf$. Moreover, if $T_1$ and $T_2$ are ideal triangulations such that $T_2=f_\arc(T_1)$ for an arc $\arc\in T_1$, then $\tagfunction_\epsilon(T_2)=f_{\tagfunction_\epsilon(\arc)}(\tagfunction_{\epsilon}(T_1))$.
\item If $\tau$ is a tagged triangulation of $\surf$, then $\tau^\circ$ is an ideal triangulation of $\surf$ and $\arc\mapsto\arc^\circ$ is a bijection between $\tau$ and $\tau^\circ$.
\item For every ideal triangulation $T$, we have $\tagfunction_\mathbf{1}(T)^\circ=T$, where $\mathbf{1}:\punct\to\{-1,1\}$ is the constant function taking the value 1.
\item For every tagged triangulation $\tau$ and every tagged arc $i\in\tau$ we have $\tagfunction_{\epsilon_\tau}(i^\circ)=i$, where $i^\circ\in\tau^\circ$ is the ordinary arc that replaces $i$ in Definition \ref{def:tau-circ}. Consequently, $\tagfunction_{\epsilon_\tau}(\tau^\circ)=\tau$.
\item Let $\tau$ and $\sigma$ be tagged triangulations such that $\epsilon_\tau=\epsilon_\sigma$.
    If $\sigma=f_{\arc}(\tau)$ for a tagged arc $\arc\in\tau$, then $\sigma^\circ=f_{\arc^\circ}(\tau^\circ)$, where $\arc^\circ\in\tau^\circ$ is the ordinary arc that replaces $\arc$ in Definition \ref{def:tau-circ}. Moreover, the diagram of functions
    \begin{equation}\label{eq:commutative-diagram-ordinary-flips}
    \xymatrix{\tau \ar[r]^{?^\circ} \ar[d] & \tau^\circ \ar[d] \ar[r]^{\tagfunction_{\epsilon_\tau}} & \tagfunction_{\epsilon_\tau}(\tau^\circ) \ar[d] \\
    \sigma \ar[r]_{?^\circ} & \sigma^\circ\ar[r]_{\tagfunction_{\epsilon_\sigma}} & \tagfunction_{\epsilon_\sigma}(\sigma^\circ)
    }
    \end{equation}
    commutes, where the three vertical arrows are canonically induced by the operation of flip.
\item Let $\tau$ and $\sigma$ be tagged triangulations such that $\sigma=f_i(\tau)$ for some tagged arc $i\in\tau$. Then $\epsilon_\tau$ and $\epsilon_\sigma$ either are equal or differ at exactly one puncture $q$. In the latter case, if $\epsilon_\tau(q)=1=-\epsilon_\sigma(q)$, then $\arc^\circ$ is a folded side of $\tau^\circ$ incident to the puncture $q$, and $\tagfunction_{\epsilon_\tau\epsilon_\sigma}(\sigma^\circ)=f_{\tagfunction_{\mathbf{1}}(i^\circ)}(\tagfunction_{\mathbf{1}}(\tau^\circ))$, where $\epsilon_\tau\epsilon_\sigma:\punct\to\{-1,1\}$ is the function defined by $p\mapsto\epsilon_\tau(p)\epsilon_\sigma(p)$.
    Moreover, the diagram of functions
    \begin{equation}\label{eq:commutative-diagram-tagged-flips}
    \xymatrix{\tau \ar[r]^{?^\circ} \ar[d] & \tau^\circ \ar[r]^{\tagfunction_{\mathbf{1}}} & \tagfunction_{\mathbf{1}}(\tau^\circ) \ar[d]\\
\sigma=f_i(\tau) \ar[r]_{\phantom{xxx}?^\circ} & \sigma^\circ \ar[r]_{\tagfunction_{\epsilon_\tau\epsilon_\sigma}\phantom{xxxxxxxxxx}} & \tagfunction_{\epsilon_\tau\epsilon_\sigma}(\sigma^\circ)=f_{\tagfunction_{\mathbf{1}}(i^\circ)}(\tagfunction_{\mathbf{1}}(\tau^\circ))
}
    \end{equation}
    commutes, where the two vertical arrows are canonically induced by the operation of flip.
\end{enumerate}
\end{prop}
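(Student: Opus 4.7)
The plan is to verify each of the six items in sequence, appealing to the definitions of $\tagfunction_\epsilon$ and $?^\circ$ and to the results of \cite[Subsection 9.1]{FST}; most of the work amounts to unpacking the definitions and checking the commutative diagrams by a case-by-case inspection of which kind of ordinary arc is being flipped.

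For item (1), injectivity of $\tagfunction_\epsilon$ follows at once from Definition \ref{def:tagfunction(ideal-arc)}: if $\arc$ is not a loop enclosing a once-punctured monogon, the underlying ordinary arc of $\tagfunction_\epsilon(\arc)$ is $\arc$ itself; if $\arc$ is such a loop, then the pair (underlying arc, which end carries which tag) determines $\arc$. Compatibility-preservation is a direct verification using \cite[Definitions 2.4 and 7.4]{FST} and the fact that tags at a shared endpoint of two compatible ordinary arcs will be assigned identically by $\epsilon$. That ideal triangulations go to tagged triangulations is then a consequence of compatibility-preservation plus the cardinality statement for tagged triangulations recalled above Proposition \ref{prop:ideal-triangs-seqs-of-flips}. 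Compatibility of $\tagfunction_\epsilon$ with flips follows from the fact that both operations are local: flipping an arc only modifies the single arc in question, and $\tagfunction_\epsilon$ acts arc-by-arc, so flipping and tagging commute once compatibility is known.

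Items (2)--(4) are direct translations of \cite[Definition 9.2 and surrounding discussion]{FST}. For (2), the rule in Definition \ref{def:tau-circ} produces ordinary arcs whose pairwise compatibility follows from that of their tagged originals, and the number of arcs matches; since the arc $\arc\in\tau$ is replaced either by its underlying ordinary arc or, when $\delta_\tau(p)=0$, by a loop, the map $\arc\mapsto\arc^\circ$ is a well-defined bijection. Item (3) is immediate because $\tagfunction_{\mathbf{1}}$ only adds plain tags, and $?^\circ$ then deletes them; no puncture has signature $0$ in $\tagfunction_{\mathbf{1}}(T)$, so no loop-to-notched conversion happens. Item (4) is a matter of checking the two clauses of Definition \ref{def:tagfunction(ideal-arc)} against the two clauses of Definition \ref{def:tau-circ}: the tag prescribed by $\epsilon_\tau$ at each endpoint of $i^\circ$ reconstructs precisely the tag $i$ carries there, including the case $\delta_\tau(p)=0$ where $i^\circ$ becomes a loop and the tag assignment recovers the notched end inside the monogon.

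For item (5), since $\epsilon_\tau=\epsilon_\sigma$ and $\sigma=f_i(\tau)$, item (4) gives $\tau=\tagfunction_{\epsilon_\tau}(\tau^\circ)$ and $\sigma=\tagfunction_{\epsilon_\tau}(\sigma^\circ)$; combined with item (1) and the injectivity of $\tagfunction_{\epsilon_\tau}$, the equality $\sigma^\circ=f_{i^\circ}(\tau^\circ)$ follows. The commutativity of \eqref{eq:commutative-diagram-ordinary-flips} is then a tautological rewriting of these identities.

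Item (6) is where the only real case analysis happens and is the main obstacle. First, I would observe that flipping a tagged arc $i$ can only alter signatures at the endpoints of $i$, and more precisely only at a puncture $p$ where $\delta_\tau(p)\in\{-1,1\}$ and where $i$ is one of exactly two arcs incident to $p$; this gives the dichotomy $\epsilon_\tau=\epsilon_\sigma$ or they differ at one puncture $q$. In the latter situation, after possibly swapping the roles of $\tau$ and $\sigma$, assume $\epsilon_\tau(q)=1=-\epsilon_\sigma(q)$. A careful inspection of the flip in $\tau$ — using the full classification of local flip configurations from \cite[Subsection 7.1]{FST} — shows that $i$ must be the tagged arc whose underlying ordinary arc is a loop enclosing $q$, so that in $\tagfunction_{\mathbf{1}}(\tau^\circ)$ this loop corresponds to the folded side of the self-folded triangle at $q$, confirming that $i^\circ$ is a folded side of $\tau^\circ$. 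The identity $\tagfunction_{\epsilon_\tau\epsilon_\sigma}(\sigma^\circ)=f_{i^\circ}(\tagfunction_{\mathbf{1}}(\tau^\circ))$ is then checked by explicit local inspection around $q$: flipping the folded side in $\tagfunction_{\mathbf{1}}(\tau^\circ)$ produces an ideal triangulation whose tagging under $\epsilon_\tau\epsilon_\sigma$ (which is $-1$ only at $q$) agrees, arc by arc and tag by tag, with $\sigma$. Commutativity of \eqref{eq:commutative-diagram-tagged-flips} packages these identifications. The hardest part is being completely rigorous about the local picture at $q$ and at the marked point at the base of the loop, so the bulk of the writing effort for (6) will go into that local analysis.
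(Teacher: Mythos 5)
The paper offers no proof of this proposition at all---it simply records that it ``follows from the results in \cite[Subsection 9.1]{FST}''---so your write-up is being measured against the definitions rather than against an argument in the text. Your overall strategy (unwind Definitions \ref{def:tagfunction(ideal-arc)} and \ref{def:tau-circ} item by item, and reduce the flip statements to injectivity of $\tagfunction_\epsilon$ plus uniqueness of the arc completing $n-1$ given arcs to a triangulation) is the right one, and items (1), (2), (4) and (5) are handled correctly.

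There are, however, two concrete errors, both on the same point: the dictionary between self-folded triangles and notched arcs. In item (3) you assert that $\tagfunction_{\mathbf{1}}$ ``only adds plain tags'' and that ``no puncture has signature $0$ in $\tagfunction_{\mathbf{1}}(T)$''. This is false whenever $T$ has a self-folded triangle: by clause (2) of Definition \ref{def:tagfunction(ideal-arc)}, the loop $j$ of a self-folded triangle enclosing a puncture $p$ is sent by $\tagfunction_{\mathbf{1}}$ to the enclosed radius \emph{notched} at $p$ (the end at $p$ is notched precisely when $\epsilon(p)=1$), while the folded side is sent to the same radius with plain tags; hence $\delta_{\tagfunction_{\mathbf{1}}(T)}(p)=0$, and it is exactly the loop-reconversion clause of Definition \ref{def:tau-circ} that recovers $T$. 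The identity $\tagfunction_{\mathbf{1}}(T)^\circ=T$ is still true, but your justification would fail on precisely the triangulations for which the statement has content. In item (6) you write that $i$ ``must be the tagged arc whose underlying ordinary arc is a loop enclosing $q$''. No tagged arc has such a loop as its underlying arc (this is excluded in the definition of tagged arc); what actually happens is that $\delta_\tau(q)=0$, the flipped arc $i$ is the one tagged \emph{plain} at $q$, its image $i^\circ$ is the radius, i.e.\ the folded side, and it is the other tagged arc of $\tau$ notched at $q$ (sharing its underlying ordinary arc with $i$) whose image under $?^\circ$ is the enclosing loop. Your final conclusions in (3) and (6) are correct, but the local picture used to reach them is wrong, and since that local picture is the entire substance of items (3), (4) and (6), those parts need to be redone with the correct correspondence: plain at $q$ corresponds to the folded side, notched at $q$ corresponds to the loop.
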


\begin{ex} Consider the tagged triangulation $\tau$ depicted on the left side of Figure \ref{Fig:flip_same_weak_signature_example}. Given any tagged arc $i\in\tau$, the tagged triangulation $f_i(\tau)$ will satisfy $\epsilon_\tau=\epsilon_{f_i(\tau)}$ if and only if $i$ is not one of the tagged arcs in $\tau$ that have been drawn bolder than the rest.
        \begin{figure}[!h]
                \caption{}\label{Fig:flip_same_weak_signature_example}
                \centering
                \includegraphics[scale=.4]{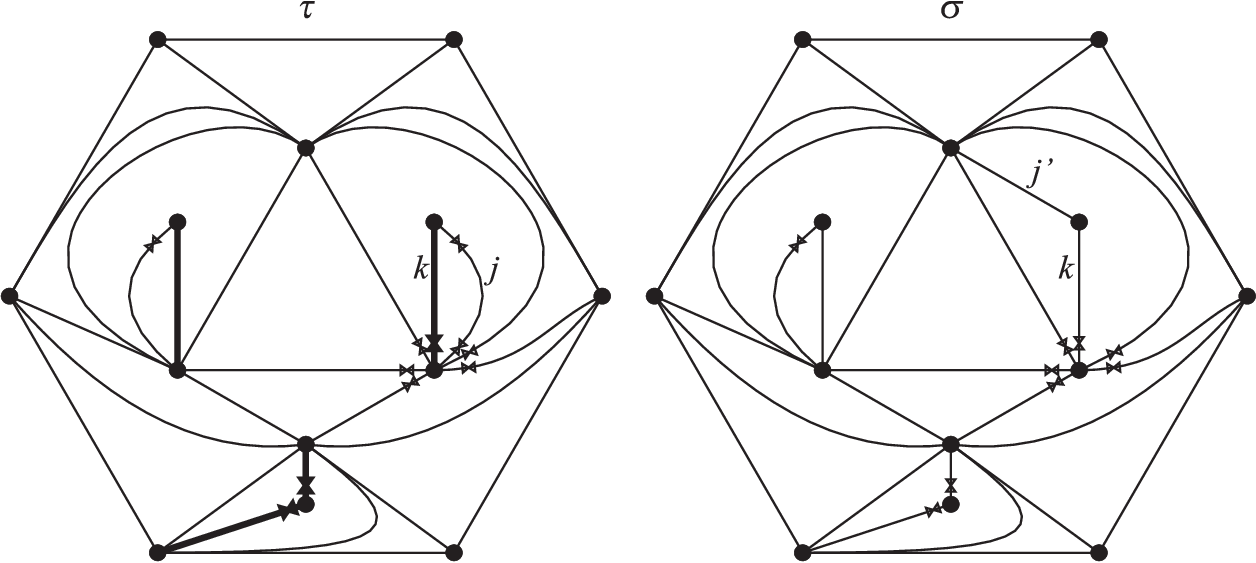}
        \end{figure}

If we flip the tagged arc $j\in\tau$ the resulting tagged triangulation $\sigma=f_j(\tau)$, depicted on the right side of Figure \ref{Fig:flip_same_weak_signature_example}, certainly satisfies $\epsilon_\tau=\epsilon_\sigma$. In Figure \ref{Fig:flip_same_weak_signature_example_circ} we have drawn the ideal triangulations $\tau^\circ$ and $\sigma^\circ$, and the reader can easily verify that, as stated in the fifth assertion of Proposition \ref{prop:tagfunction-and-circ}, we have $\sigma^\circ=f_{j^\circ}(\tau^\circ)$.
        \begin{figure}[!h]
                \caption{}\label{Fig:flip_same_weak_signature_example_circ}
                \centering
                \includegraphics[scale=.4]{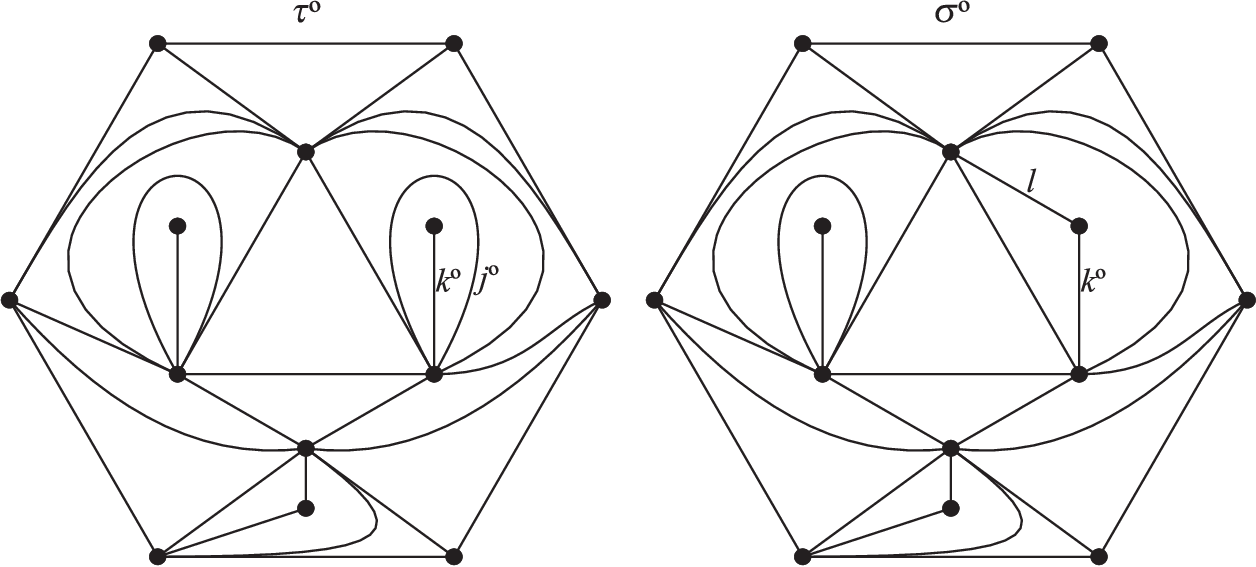}
        \end{figure}
The fact that the tagged arc $j'$ in Figure \ref{Fig:flip_same_weak_signature_example} equals the ordinary arc $l$ in Figure \ref{Fig:flip_same_weak_signature_example_circ} is pure coincidence: $\tau$ and $\sigma$ could as well have been such that the end of $j'$ not incident to $k$ were a notch.What is important is that the functions $?^\circ:\tau\to\tau^\circ$ and $?^\circ:\sigma\to\sigma^\circ$ respectively send $j$ to $j^\circ$ and $j'$ to $l$, which amounts to the commutativity of the left square in the diagram \eqref{eq:commutative-diagram-ordinary-flips} (for tagged triangulations with the same weak signature). Let us stress also the fact that $\tagfunction_{\epsilon_\tau}(k^\circ)=k=\tagfunction_{\epsilon_\sigma}(k^\circ)$, $\tagfunction_{\epsilon_\tau}(j^\circ)=j$, $\tagfunction_{\epsilon_\sigma}(l)=j'$, which amounts to the commutativity of the right square of the diagram \eqref{eq:commutative-diagram-ordinary-flips} (provided $\epsilon_\tau=\epsilon_\sigma$, of course).

If we on the other hand flip the tagged arc $k\in\tau$, we obtain the tagged triangulation $\rho=f_k(\tau)$ depicted on the left side of Figure \ref{Fig:flip_different_weak_signature_example}.
        \begin{figure}[!h]
                \caption{}\label{Fig:flip_different_weak_signature_example}
                \centering
                \includegraphics[scale=.4]{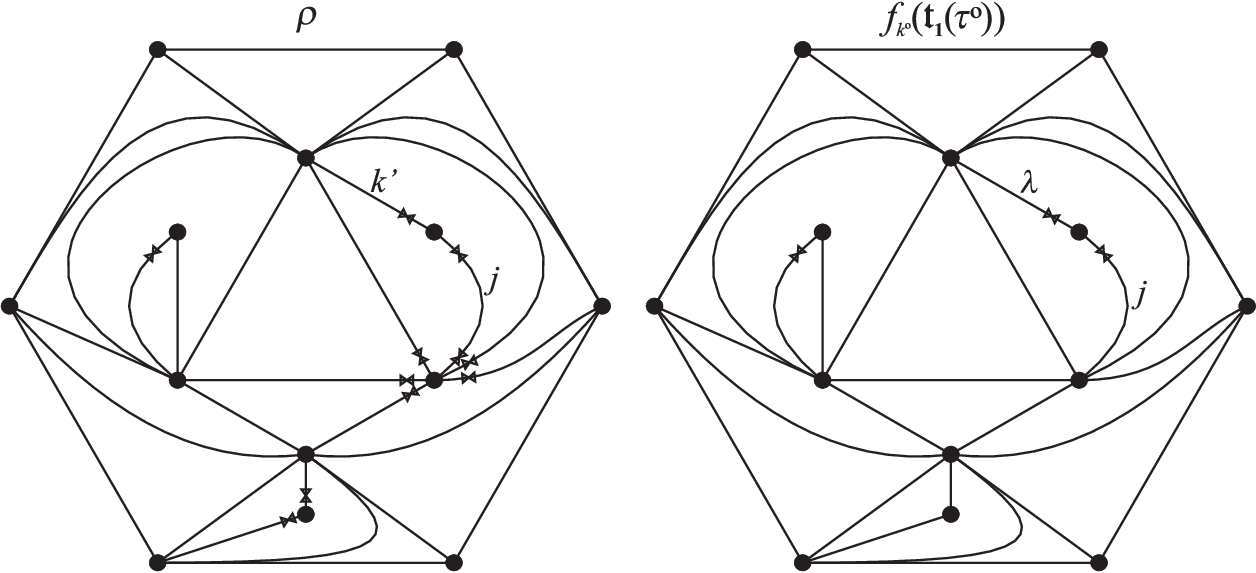}
        \end{figure}
On the right side of this Figure we can see the tagged triangulation $f_{\tagfunction_{\mathbf{1}}(k^\circ)}(\tagfunction_{\mathbf{1}}(\tau^\circ))$, and the equality $\tagfunction_{\epsilon_\tau\epsilon_\rho}(\rho^\circ)=f_{\tagfunction_{\mathbf{1}}(k^\circ)}(\tagfunction_{\mathbf{1}}(\tau^\circ))$, stated in the fifth assertion of Proposition \ref{prop:tagfunction-and-circ}, is easily verified.
Here again, the fact that the tagged arc $k'\in\rho$ equals the tagged arc $\lambda\in\tagfunction_{\epsilon_\tau\epsilon_\rho}(\rho^\circ)=f_{\tagfunction_{\mathbf{1}}(k^\circ)}(\tagfunction_{\mathbf{1}}(\tau^\circ))$ is mere coincidence. What matters is that the diagram \eqref{eq:commutative-diagram-tagged-flips} commutes.
$\hfill{\blacktriangle}$
\end{ex}

Let $\surf$ be any surface with marked points. To each ideal triangulation $\tau$ we associate two quivers $\unredqtau$ and $\qtau$ as follows. Both $\unredqtau$ and $\qtau$ have $\tau$ as vertex set, i.e., their vertices are precisely the arcs in $\tau$.

To define the number of arrows between any two given vertices of $\unredqtau$, we need an auxiliary function $\pi_\tau:\tau\rightarrow\tau$ which we now describe. Given an arc $i\in\tau$, let $\pi_\tau(i)$ be the arc in $\tau$ defined as follows. If $i$ is not the folded side of a self-folded triangle, then $\pi_\tau(i)=i$, whereas if $i$ is the folded side of a self-folded triangle $\triangle$, and $k\in\tau$ is the loop that encloses $\triangle$, then $\pi_\tau(i)=k$. Given arcs $i,j\in \tau$, the number of arrows of $\unredqtau$ that go from $j$ to $i$ is defined to be precisely the number of ideal triangles $\triangle$ of $\tau$ that satisfy the following conditions:
\begin{itemize}
\item $\triangle$ is not a self-folded triangle;
\item $\pi_\tau(i)$ and $\pi_\tau(j)$ are contained in $\triangle$;
\item inside $\triangle$, $\pi_\tau(j)$ directly precedes $\pi_\tau(i)$ according to the clockwise orientation of $\triangle$ which is inherited from the orientation of $\Sigma$.
\end{itemize}

The quiver $\unredqtau$ is said to be the \emph{unreduced signed-adjacency quiver} of $\tau$ (cf. \cite[Definition 8]{Labardini1}). Depending on $\tau$, it may or may not be 2-acyclic. By definition, $\qtau$ is the quiver obtained from $\unredqtau$ by deleting all 2-cycles. We call $\qtau$ the \emph{signed-adjacency quiver} of $\tau$ (cf. the two paragraphs that precede Theorem 7 in \cite{Labardini1}).

How about the (unreduced) signed-adjacency quiver of an arbitrary tagged triangulation of $\surf$? Let $\tau$ be any such. By definition, $\unredqtau$ and $\qtau$ are the quivers respectively obtained from $\widehat{Q}(\tau^\circ)$ and $Q(\tau^\circ)$ by replacing each $i\in\tau^\circ$ with $\tagfunction_{\epsilon_\tau}(i)\in\tau$ as a vertex of the respective quiver (see Definition \ref{def:tagfunction(ideal-arc)}).

The following result of Fomin-Shapiro-Thurston is, together with Derksen-Weyman-Zelevinsky's mutation theory of quivers with potential, one of the main motivations for the constructions of \cite{Labardini1} and this paper.

\begin{thm}\label{thm:FST}\cite[Proposition 4.8 and Lemma 9.7]{FST} Let $\surf$ be any surface with marked points, and let $\tau$ and $\sigma$ be tagged triangulations of $\surf$. If $\sigma$ can be obtained from $\tau$ by the flip of a tagged arc $i\in\tau$, then $\qsigma=\mu_i(\qtau)$.
\end{thm}

\begin{remark}\label{rem:Qtau-and-unredQtau}\begin{enumerate}\item The definition of $\qtau$ for an arbitrary tagged triangulation of a surface with marked points is due to
Fomin-Shapiro-Thurston. Indeed, in \cite[Definitions 4.1 and 9.6]{FST} they associated a skew-symmetric matrix $B(\tau)$ to each tagged triangulation $\tau$. By a well-known correspondence between skew-symmetric matrices and 2-acyclic quivers, $B(\tau)$ gives rise to a 2-acyclic quiver. This quiver is precisely $\qtau$.
\item The quiver $\unredqtau$, defined in \cite[Definition 8]{Labardini1} in the particular case where one has an ideal triangulation $\tau$, is introduced with the aim of being able to define a potential $\stau$ on $\qtau$ in a way which is as uniform as possible (see Definitions \ref{def:obvious-cycles} and \ref{def:QP-of-tagged-triangulation} below). See Remark \ref{rem:reduction-already-done} as well.
\end{enumerate}
\end{remark}

\section{Definition of $\qstau$}\label{sec:definition-of-Stau}

Let $\surf$ be a surface, with puncture set $\punct$. No assumptions on the boundary of $\Sigma$ are made throughout this section.

\begin{defi}\label{def:obvious-cycles}\cite[Definition 23]{Labardini1} Let $\tau$ be an ideal triangulation of $\surf$, and $\mathbf{y}=(y_p)_{p\in\punct}$ be a choice of non-zero scalars (one scalar $y_p\in\field$ per puncture $p$).
\begin{itemize} \item Each interior non-self-folded ideal triangle $\triangle$ of $\tau$ gives rise to an oriented triangle $\alpha\beta\gamma$ of $\unredqtau$, let $\widehat{S}^\triangle(\tau)=\alpha\beta\gamma$ be such oriented triangle up to cyclical equivalence.
\item If the interior non-self-folded ideal triangle $\triangle$ with sides $j$, $k$ and $l$, is adjacent to two self-folded triangles like in the configuration of Figure \ref{adsftriangs},
        \begin{figure}[!h]
                \caption{}\label{adsftriangs}
                \centering
                \includegraphics[scale=.5]{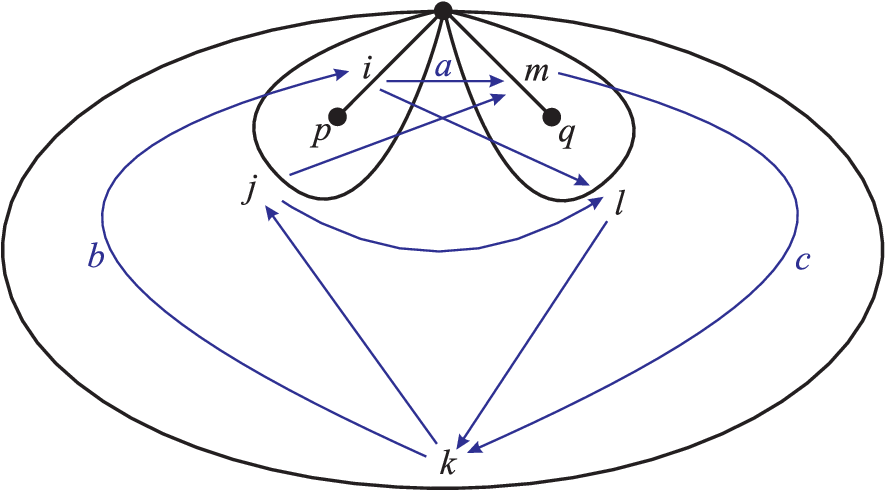}
        \end{figure}
define
$\widehat{U}^\triangle(\tau,\mathbf{y})=y_p^{-1}y_q^{-1}abc$ (up to cyclical equivalence), where $p$ and $q$ are the punctures enclosed in the self-folded triangles adjacent to $\triangle$.
Otherwise, if it is adjacent to less than two self-folded triangles, define $\widehat{U}^\triangle(\tau,\mathbf{y})=0$.
\item If a puncture $p$ is adjacent to exactly one arc $\arc$ of $\tau$, then $\arc$ is the folded side of a self-folded triangle of $\tau$ and around $\arc$ we have the configuration shown in Figure \ref{sftriangle}.
        \begin{figure}[!h]
                \caption{}\label{sftriangle}
                \centering
                \includegraphics[scale=.5]{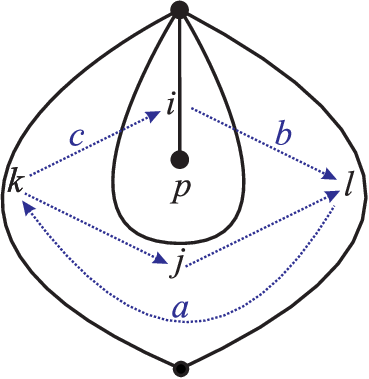}
        \end{figure}
In case both $k$ and $l$ are indeed arcs of $\tau$ (and not part of the boundary of $\surfnoM$), we define
$\widehat{S}^p(\tau,\mathbf{y})=-y_p^{-1}abc$ (up to cyclical equivalence).
Otherwise, if either $k$ or $l$ is a boundary segment, we define $\widehat{S}^{p}(\tau,\mathbf{y})=0$.
\item If a puncture $p$ is adjacent to more than one arc, delete all the loops incident to $p$ that enclose self-folded triangles. The arrows between the remaining arcs adjacent to $p$ form a unique cycle $a^p_1\ldots a^p_{d_p}$, without repeated arrows, that exhausts all such remaining arcs and gives a complete round around $p$ in the counter-clockwise direction defined by the orientation of $\surfnoM$. We define $\widehat{S}^p(\tau,\mathbf{y})=y_pa^p_1\ldots a^p_{d_p}$ (up to cyclical equivalence).
\end{itemize}
The \emph{unreduced potential $\widehat{S}(\tau,\mathbf{y})\in \RA{\unredqtau}$ associated to the ideal triangulation $\tau$ with respect to the choice $\mathbf{y}=(y_p)_{p\in\punct}$} is defined to be:
\begin{eqnarray}\widehat{S}(\tau,\mathbf{y}) &=&
\sum_\triangle\left(\widehat{S}^\triangle(\tau)+
\widehat{U}^\triangle(\tau,\mathbf{y})\right)+
\sum_{p\in\punct}\left(\widehat{S}^p(\tau,\mathbf{y})\right)
\end{eqnarray}
where the first sum runs over all interior non-self-folded triangles of $\tau$. We define $(\qtau,S(\tau,\mathbf{y}))$ to be the reduced part of $(\unredqtau,\widehat{S}(\tau,\mathbf{y}))$.
\end{defi}

Recall that given a tagged triangulation $\tau$, the function $\tagfunction_{\epsilon_\tau}:\tau^\circ\rightarrow\tau$ is a specific bijection. By its very definition, $\qtau$ is the quiver obtained from $Q(\tau^\circ)$ by replacing each vertex $i\in\tau^\circ$ with $\tagfunction_{\epsilon_\tau}(i)\in\tagfunction_{\epsilon_\tau}(\tau^\circ)=\tau$. During this replacement, the arrow set does not suffer any change whatsoever: if $a$ is an arrow in $Q(\tau^\circ)$ going from $j\in\tau^\circ$ to $i\in\tau^\circ$, then $a$ itself is an arrow in $\qtau$ going from $\tagfunction_{\epsilon_\tau}(j)$ to $\tagfunction_{\epsilon_\tau}(i)$. Therefore, the function $\tagfunction_{\epsilon_\tau}:\tau^\circ\rightarrow\tau$, together with the identity function of the arrow set, is a specific quiver isomorphism $Q(\tau^\circ)\rightarrow\qtau$, which we denote by $\tagfunction_{\epsilon_\tau}$ also. As such, $\tagfunction_{\epsilon_\tau}$ induces a $K$-algebra isomorphism $\RA{Q(\tau^\circ)}\rightarrow\RA{Q(\tau)}$, which, in a slight abuse of notation, we denote by $\tagfunction_{\epsilon_\tau}$ as well.

We are ready to define the potential of an arbitrary tagged triangulation. Given a tagged triangulation $\tau$ of $\surf$ and a choice $\mathbf{x}=(x_p)_{p\in\punct}$ be a choice of non-zero scalars, we shall write $\epsilon_\tau\cdot\mathbf{x}$ to denote the choice $(\epsilon_\tau(p)x_p)_{p\in\punct}$, which clearly consists of non-zero scalars as well.

\begin{defi}\label{def:QP-of-tagged-triangulation} Let $\tau$ be a tagged triangulation of $\surf$, and $\mathbf{x}=(x_p)_{p\in\punct}$ be a choice of non-zero scalars (one scalar $x_p\in\field$ per puncture $p$). The \emph{potential associated to $\tau$ with respect to the choice $\mathbf{x}=(x_p)_{p\in\punct}$} is defined to be $\stau=\tagfunction_{\epsilon_\tau}(S(\tau^\circ,\epsilon_\tau\cdot\mathbf{x}))\in\RA{\qtau}$.
\end{defi}

\begin{remark}\label{rem:reduction-already-done}
\begin{enumerate}\item If $\tau$ is an ideal triangulation, then, identifying $\tau$ with $\tagfunction_{\mathbf{1}}(\tau)$ via the bijection $\tagfunction_{\mathbf{1}}:\tau\rightarrow\tagfunction_{\mathbf{1}}(\tau)$ (that is, identifying each $i\in\tau$ with $\tagfunction_{\mathbf{1}}(i)\in\tagfunction_{\mathbf{1}}(\tau)$), it is straightforward to see that the QP $\qstau$ defined above coincides with the one defined in \cite{Labardini1}.
\item If $\tau$ is a non-ideal tagged triangulation of a punctured surface with non-empty boundary, the QP $\qstau$ defined above does not coincide with the one defined in \cite{CI-LF} since the latter does not take the weak signature $\epsilon_\tau$ into account. On the other hand, the cycles that appear in the potential defined in \cite{CI-LF} for $\tau$ are precisely the cycles that appear in the potential $\stau$ defined here. Proposition \ref{prop:scalars-are-irrelevant} below will imply the two alluded QPs are right-equivalent (see the line right after Example \ref{ex:walk-on-dimer}).
\item For any ideal triangulation $\tau$, it is possible to define the potential $\stau\in\RA{\qtau}$ directly, that is, without using the QP $\unredqstau$ as an intermediary, but it is the author's opinion that a direct definition of $\stau$ would be more cumbersome as it would make it necessary to consider further separate configurations to locate `obvious' cycles on $\qtau$. On the other hand, the only situation where one needs to apply reduction to $(\unredqtau,\unredstau)$ in order to obtain $\stau$ is when there is some puncture incident to exactly two arcs of the ideal triangulation $\tau$. The reduction is done explicitly in \cite[Section 3]{Labardini1}.
\end{enumerate}
\end{remark}

Let us illustrate Definition \ref{def:QP-of-tagged-triangulation} with a couple of examples.

\begin{ex}\label{example:tagged-triang-tau} On the left side of Figure \ref{Fig:3_punctured_torus_tagged_triang_and_quiver} we can see a three-times-punctured torus (with no boundary), with the scalars $x_p$, $p\in\punct$, labeling the three punctures, and a tagged triangulation $\tau$ of it. On the right side we can see the ideal triangulation $\tau^\circ$ and the quiver $Q(\tau^\circ)$ drawn on the surface.
        \begin{figure}[!h]
                \caption{}\label{Fig:3_punctured_torus_tagged_triang_and_quiver}
                \centering
                \includegraphics[scale=.5]{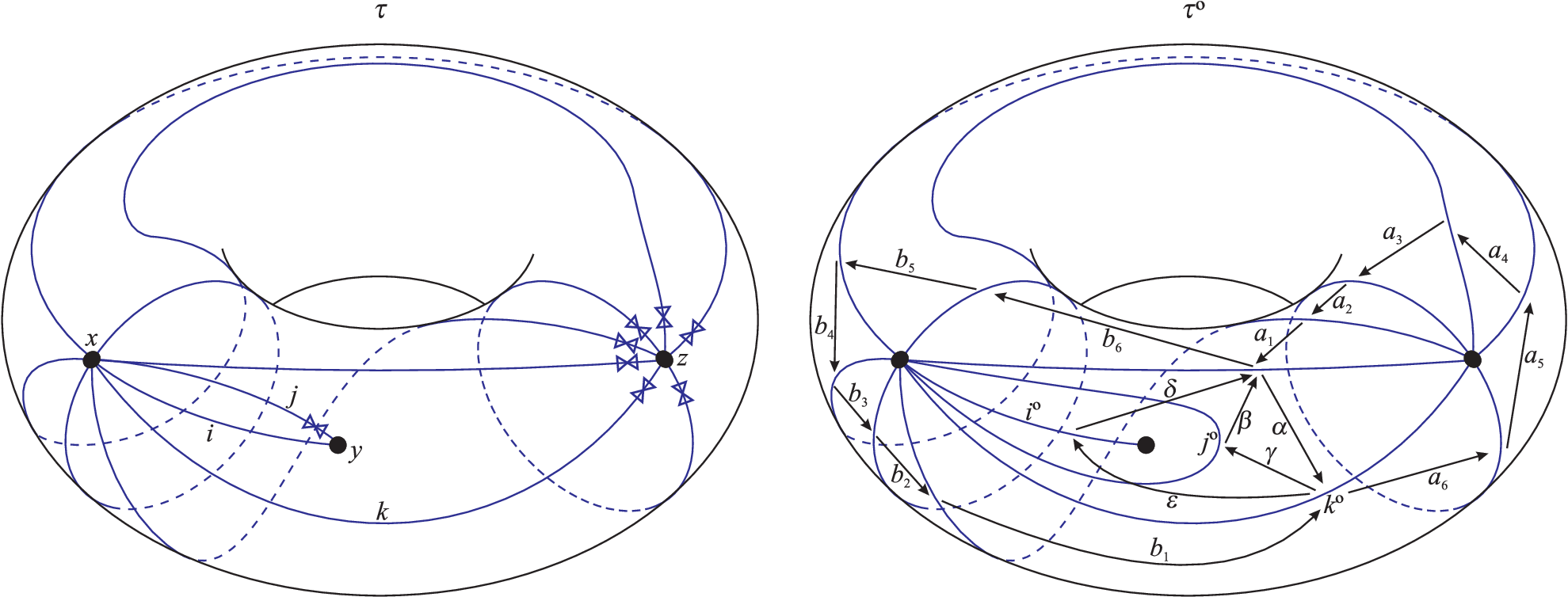}
        \end{figure}

The function $\tagfunction_{\epsilon_\tau}:\arcsinsurf\to\taggedinsurf$ restricts to a bijection $\tagfunction_{\epsilon_\tau}:\tau^\circ\to\tau$ acting by
$$
\tagfunction_{\epsilon_\tau}:\ \ i^\circ\mapsto i, \ \ j^\circ\mapsto j,
$$
and in the obvious way on the rest of the arcs of $\tau^\circ$. Since the 2-acyclic quiver $\qtau$ clearly coincides with the unreduced quiver $\unredqtau$, Definition \ref{def:QP-of-tagged-triangulation} tells us that
$$
\stau=\alpha\beta\gamma+a_1b_2b_6+a_2a_6b_1+a_3b_4a_5+a_4b_5b_3+x\delta\varepsilon b_1b_2b_3b_4b_5b_6-y^{-1}\alpha\delta\varepsilon-z\alpha a_1a_2a_3a_4a_5a_6.
$$
$\hfill{\blacktriangle}$
\end{ex}

\begin{ex}\label{example:tagged-triang-sigma} Let us flip the tagged arc $k$ of the tagged triangulation $\tau$ from the previous example. The tagged triangulation $\sigma=f_k(\tau)$, its ideal counterpart $\sigma^\circ$, and the quiver $Q(\sigma^\circ)$ are depicted in Figure \ref{Fig:3_punctured_torus_tagged_triang_and_quiver_flipped}.
        \begin{figure}[!h]
                \caption{}\label{Fig:3_punctured_torus_tagged_triang_and_quiver_flipped}
                \centering
                \includegraphics[scale=.5]{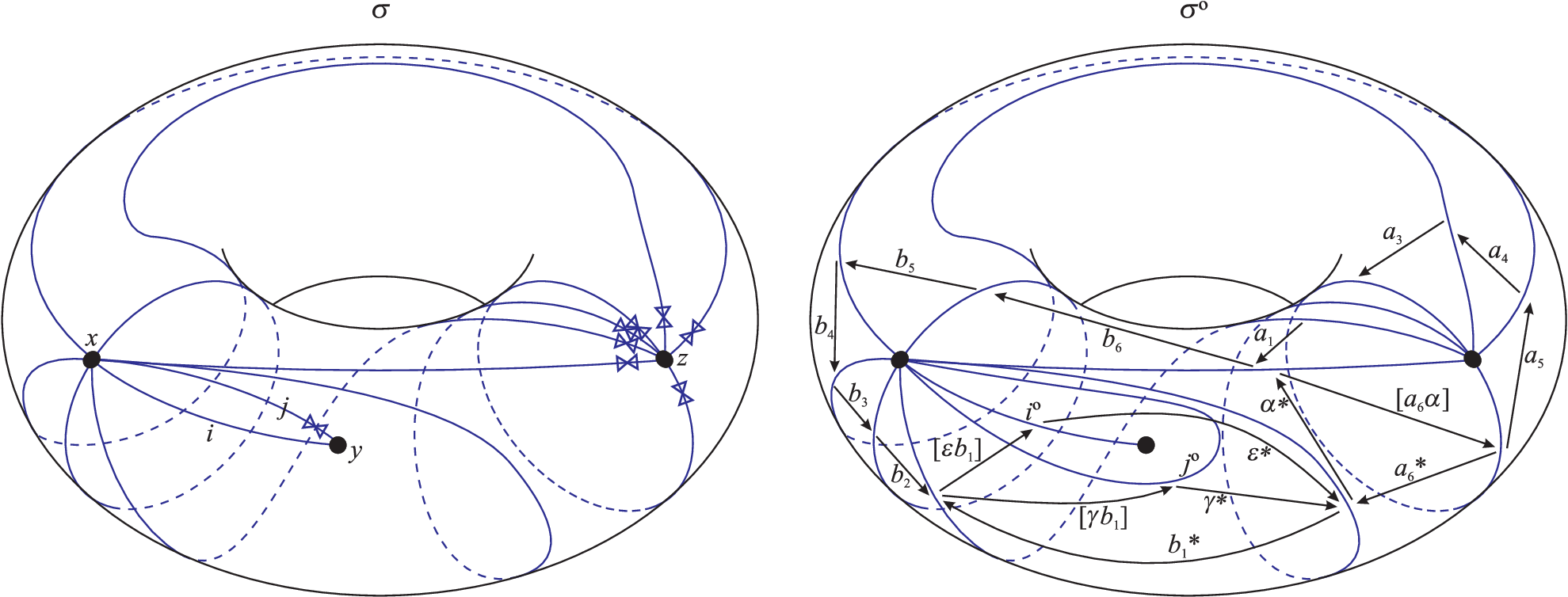}
        \end{figure}

Just as in the previous example, we have the bijection $\tagfunction_{\epsilon_\sigma}:\sigma^\circ\to\sigma$ acting by
$$
\tagfunction_{\epsilon_\sigma}:\ \ i^\circ\mapsto i, \ \ j^\circ\mapsto j,
$$
and in the obvious way on the rest of the arcs of $\sigma^\circ$. Since the 2-acyclic quiver $\qsigma$ clearly coincides with $\unredqsigma$, applying Definition \ref{def:QP-of-tagged-triangulation} we obtain
\begin{eqnarray}\nonumber
\ssigma & = & \gamma^*[\gamma b_1]b_1^*+a_6^*[a_6\alpha]\alpha^*+a_1b_2b_6+a_3b_4a_5+a_4b_5b_3
\\
\nonumber
& & +x\alpha^*\varepsilon^*[\varepsilon b_1]b_2b_3b_4b_5b_6-y^{-1}\varepsilon^*[\varepsilon b_1]b_1^*-za_1b_1^*a_6^*a_3a_4a_5[a_6\alpha].
\end{eqnarray}
$\hfill{\blacktriangle}$
\end{ex}

Now we recall one of the main results of \cite{Labardini1}.

\begin{thm}[{\cite[Theorem 30]{Labardini1}}]\label{thm:ideal-flips<->mutations} Let $\tau$ and $\sigma$ be ideal triangulations of $\surf$. If
$\sigma=f_k(\tau)$ for some arc $k\in\tau$, then $\mu_k\qstau$ and $\qssigma$ are right-equivalent QPs.
\end{thm}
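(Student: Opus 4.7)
The theorem is essentially the main result of \cite{Labardini1}, and its proof hinges on a case analysis of the local combinatorial structure of $\tau$ around the flipped arc $k$. The plan is: (i) enumerate the possible local configurations of the two ideal triangles $\triangle_1,\triangle_2$ sharing $k$ (and of their immediate neighbors); (ii) for each configuration, write down $\stau$ explicitly, isolating the subpotential consisting of terms involving arrows incident to $k$; (iii) apply the premutation $\widetilde{\mu}_k$ of Derksen--Weyman--Zelevinsky, then reduce via the Splitting Theorem; and (iv) identify the reduced output with $\qssigma$ by an explicit $R$-algebra isomorphism.

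More concretely, the local pictures I would treat separately are: (a) both $\triangle_1$ and $\triangle_2$ are non-self-folded, with no identification of their non-$k$ sides; (b) certain sides get pairwise identified, producing loops or double arrows after the flip; (c) $k$ is the unfolded side of a self-folded triangle (so $k$ borders a puncture only through a monogon); and (d) one or both of $\triangle_1,\triangle_2$ is adjacent to a self-folded triangle, activating $\widehat{U}^\triangle(\tau,\mathbf{x})$ terms. In each case I would record the cycles $\widehat{S}^\triangle$, $\widehat{U}^\triangle$ and $\widehat{S}^p$ that contain an arrow at $k$, apply $\widetilde{\mu}_k$ (reverse all arrows at $k$, substitute each composition $ba$ through $k$ by a fresh arrow $[ba]$, and add $\Delta_k=\sum [ba]\,b^\ast a^\ast$), and then list the 2-cycles produced by the interaction of $\Delta_k$ with the modified $\widehat{S}^\triangle$ terms. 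These 2-cycles are exactly the data needed to invoke the Splitting Theorem, yielding a right-equivalence from the reduced premutation onto a reduced QP whose quiver is readily seen to coincide with $\qsigma$.

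The matching of potentials at the level of the reduced quiver is the combinatorial heart of the argument: the newly introduced arrows $[ba]$ correspond, under the canonical identification of $\qsigma$-arrows with triangle-sides in $\sigma$, to the arrows of $\qsigma$ arising from the two new triangles of $\sigma$ containing $k^\ast$. The oriented-triangle part of $\ssigma$ pops out directly from the terms $[ba]b^\ast a^\ast$ in $\Delta_k$, while the puncture cycles $\widehat{S}^p(\sigma,\mathbf{x})$ arise by rewriting the cycles $\widehat{S}^p(\tau,\mathbf{x})$ using the substitution $ba\mapsto[ba]$. The residual discrepancy is cyclically equivalent to zero after a unitriangular change of variables on the arrows $a^\ast,b^\ast$; the limit-of-unitriangular-automorphisms device of Lemma~\ref{lemma:well-defined-limit-automorphism} absorbs the higher-order tails.

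The hardest part, by far, is case (d) and its overlaps with (c): when a self-folded triangle is adjacent to $\triangle_1$ or $\triangle_2$, the cycles $\widehat{S}^p(\tau,\mathbf{x})$ carry scalar factors $x_p^{\pm 1}$, the terms $\widehat{U}^\triangle(\tau,\mathbf{x})$ enter the potential with factors $x_p^{-1}x_q^{-1}$, and the 2-cycle cancellations produced by $\widetilde{\mu}_k$ must conspire with these scalars to reproduce the corresponding $\widehat{U}^\triangle(\sigma,\mathbf{x})$ and $\widehat{S}^p(\sigma,\mathbf{x})$ with their prescribed signs and scalars. Verifying this bookkeeping in every subcase, and exhibiting the right-equivalence in closed form, is the main technical load; the non-selfsided cases (a)--(b) reduce to the unpunctured calculation of \cite{ABCP} and are comparatively straightforward.
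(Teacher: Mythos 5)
The paper does not prove this statement at all: it is imported verbatim as \cite[Theorem 30]{Labardini1}, and the present text only restates it (together with Remark \ref{rem:k-is-not-a-folded-side}, which records that $k$ cannot be a folded side) and illustrates it on the three-times-punctured torus. So there is no in-paper proof to compare against; what you have written is a reconstruction of the strategy of the cited proof, and as a strategy it is the right one: enumerate the local configurations of the two ideal triangles sharing $k$ (including identifications of sides and adjacency to self-folded triangles), write the terms of $\stau$ involving arrows at $k$, apply the premutation $\widetilde{\mu}_k$, reduce via the Splitting Theorem, and exhibit an explicit right-equivalence onto $\qssigma$. Your observation that the $[ba]$-arrows become the arrows of the two new triangles of $\sigma$, so that the triangle part of $\ssigma$ comes from $\Delta_k$ while the puncture cycles come from substitution, is exactly the mechanism.

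That said, as a proof your text is a roadmap rather than an argument: the entire mathematical content of the theorem lives in the case-by-case execution (the list of configurations is long, and the closed-form right-equivalences with their signs and scalars $x_p^{\pm 1}$ are delicate precisely in the subcases you flag as hardest), and none of that is carried out. Two smaller points. First, cases (a)--(b) do not ``reduce to the unpunctured calculation of \cite{ABCP}'': even with no adjacent self-folded triangles, the puncture cycles $\widehat{S}^p(\tau,\mathbf{x})$ are present and must be tracked through the substitution $ba\mapsto[ba]$; moreover \cite{ABCP} does not contain a flip/mutation computation of this kind. Second, the appeal to the limit device of Lemma \ref{lemma:well-defined-limit-automorphism} is not how the cited proof closes the argument (there the right-equivalences are finite, explicit changes of variables case by case); using a limit of unitriangular automorphisms is a legitimate alternative, but you would then need to verify the depth hypothesis of that lemma in each configuration, which is additional unperformed work.
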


\begin{remark}\label{rem:k-is-not-a-folded-side} Note that in the situation of Theorem \ref{thm:ideal-flips<->mutations}, the fact that $\tau$ and $\sigma=f_k(\tau)$ are ideal triangulations implies that the arc $k$ cannot be a folded side of $\tau$. Conversely, $\tau$ is obtained from $\sigma$ by the flip of an arc which is not a folded side of $\sigma$.
\end{remark}

Let us illustrate Theorem \ref{thm:ideal-flips<->mutations} with an example.

\begin{ex} In this example we put tagged triangulations completely aside. Consider the ideal triangulation $\tau^\circ$ depicted on the right side of Figure \ref{Fig:3_punctured_torus_tagged_triang_and_quiver}. Its associated potential is given by
$$
S(\tau^\circ,\mathbf{x})=\alpha\beta\gamma+a_1b_2b_6+a_2a_6b_1+a_3b_4a_5+a_4b_5b_3+x\delta\varepsilon b_1b_2b_3b_4b_5b_6-y^{-1}\alpha\delta\varepsilon+z\alpha a_1a_2a_3a_4a_5a_6
$$
(note that the terms $x\delta\varepsilon b_1b_2b_3b_4b_5b_6$ and $z\alpha a_1a_2a_3a_4a_5a_6$ are accompanied by a positive sign, whereas $y^{-1}\alpha\delta\varepsilon$ is accompanied by a negative sign; this is in sync with the fact that ideal triangulations correspond to tagged triangulations with non-negative signature).

If we apply the so-called \emph{premutation} $\widetilde{\mu}_{k^\circ}$ to $(Q(\tau^\circ),S(\tau^\circ,\mathbf{x}))$ (cf. \cite[Equations (5.8) and (5.9)]{DWZ1}), we obtain the QP $(\widetilde{\mu}_{k^\circ}(Q(\tau^\circ)),\widetilde{\mu}_{k^\circ}(S(\tau^\circ,\mathbf{x})))$, where
\begin{eqnarray}\nonumber
\widetilde{\mu}_{k^\circ}(S(\tau^\circ,\mathbf{x})) & = & [S(\tau^\circ,\mathbf{x})]+\triangle_{k^\circ}(Q(\tau^\circ)) \ \ = \ \
\beta[\gamma\alpha]+a_1b_2b_6+a_2[a_6b_1]+a_3b_4a_5+a_4b_5b_3\\
\nonumber
& & +x\delta[\varepsilon b_1]b_2b_3b_4b_5b_6-y^{-1}\delta[\varepsilon\alpha]+z a_1a_2a_3a_4a_5[a_6\alpha]\\
\nonumber
& & +\gamma^*[\gamma\alpha]\alpha^*+\varepsilon^*[\varepsilon\alpha]\alpha^*+a_6^*[a_6\alpha]\alpha^*
+\gamma^*[\gamma b_1]b_1^*+\varepsilon^*[\varepsilon b_1]b_1^*+a_6^*[a_6b_1]b_1^*
\end{eqnarray}
(we have not drawn the quiver $\widetilde{\mu}_{k^\circ}(Q(\tau^\circ))$). If we then apply Derksen-Weyman-Zelevinsky's reduction process we obtain the QP $(\mu_{k^\circ}(Q(\tau^\circ)),\mu_{k^\circ}(S(\tau^\circ,\mathbf{x})))$, where
\begin{eqnarray}\nonumber
\mu_{k^\circ}(S(\tau^\circ,\mathbf{x})) & = &a_1b_2b_6+a_3b_4a_5+a_4b_5b_6+xy\alpha^*\varepsilon^*[\varepsilon b_1]b_2b_3b_4b_5b_6-zb_1^*a_6^*a_3a_4a_5[a_6\alpha]a_1\\
\nonumber
& & +a_6^*[a_6\alpha]\alpha^*+\gamma^*[\gamma b_1]b_1^*+\varepsilon^*[\varepsilon b_1]b_1^*.
\end{eqnarray}

If we on the other hand flip the arc $k^\circ\in\tau^\circ$ we obtain the ideal triangulation $\sigma^\circ=f_{k^\circ}(\tau^\circ)$ depicted on the right side of Figure \ref{Fig:3_punctured_torus_tagged_triang_and_quiver_flipped}. Its associated potential is
\begin{eqnarray}\nonumber
S(\sigma^\circ,\mathbf{x}) & = & \gamma^*[\gamma b_1]b_1^*+a_6^*[a_6\alpha]\alpha^*+a_1b_2b_6+a_3b_4a_5+a_4b_5b_3
+x\alpha^*\varepsilon^*[\varepsilon b_1]b_2b_3b_4b_5b_6\\
\nonumber
& & -y^{-1}\varepsilon^*[\varepsilon b_1]b_1^*+za_1b_1^*a_6^*a_3a_4a_5[a_6\alpha].
\end{eqnarray}

The right-equivalence $\varphi:\mu_{k^\circ}(Q(\tau^\circ),S(\tau^\circ,\mathbf{x}))\rightarrow(Q(\sigma^\circ),S(\sigma^\circ,\mathbf{x}))$ witnessing the proof of Theorem \ref{thm:ideal-flips<->mutations} can be chosen to act on arrows by
$$
\varphi: \ \ \ \alpha^*\mapsto-\alpha^*, \ \varepsilon^*\mapsto-y^{-1}\varepsilon^*, \ a_6^*\mapsto-a_6^*.
$$
We end the example with a remark: The term ``premutation", with which we refer to $\widetilde{\mu}_{k^\circ}$, is very rarely used in the literature (in particular, it is not used in \cite{DWZ1}).  $\hfill{\blacktriangle}$
\end{ex}

\section{Flip/mutation of folded sides: The problem}\label{sec:folded-sides-the-problem}

Consider the tagged triangulations $\tau$ and $\sigma=f_\arc(\tau)$ of the three-times-punctured hexagon shown in Figure \ref{Fig:hexagon_3_punct}, where the quivers $\qtau$ and $\qsigma$ are drawn as well.
        \begin{figure}[!h]
                \caption{}\label{Fig:hexagon_3_punct}
                \centering
                \includegraphics[scale=.8]{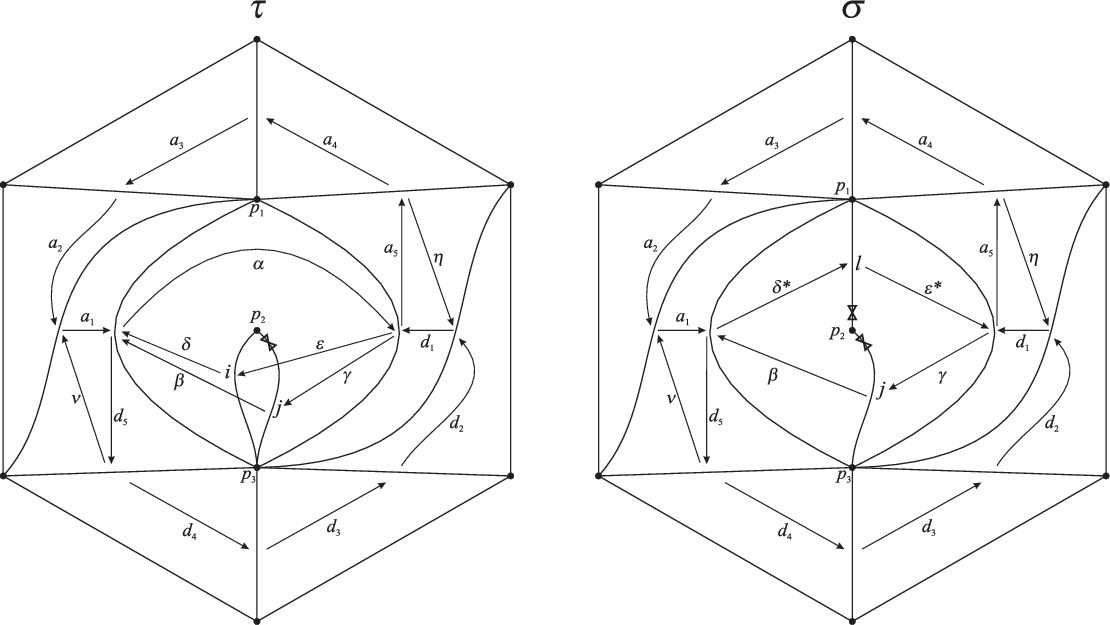}
        \end{figure}
The signature $\delta_\tau$ of $\tau$ is certainly non-negative, which means that $\tau$ can be thought to be an ideal triangulation (identifying each $k\in\tau^\circ$ with $\tagfunction_{1}(k)\in\tagfunction_{1}(\tau^\circ)=\tau$). The arc $i\in\tau$ is then the folded side of a self-folded triangle $\triangle$ of $\tau$ and the tagged arc $j$ corresponds to the loop contained in $\triangle$ that encloses $i$ in $\tau$.

Since $\sigma$ is obtained from $\tau$ by the flip of $i$, we would ``like" the QPs $\mu_i(\qstau)$ and $\qssigma$ to be right-equivalent, a fact that does not follow from Theorem \ref{thm:ideal-flips<->mutations}, because $i$ is a folded side of $\tau$ (see Remark \ref{rem:k-is-not-a-folded-side}). According to Definition \ref{def:QP-of-tagged-triangulation}, we have
\begin{eqnarray}\nonumber
\stau & = & \alpha\beta\gamma+a_1\nu d_5+a_5d_1\eta+x_{p_1}\alpha a_1a_2a_3a_4a_5-x_{p_2}^{-1}\alpha\delta\varepsilon+x_{p_3}\delta\varepsilon d_1d_2d_3d_4d_5 \ \ \ \text{and}\\
\nonumber
\ssigma & = & a_1\nu d_5+ a_5d_1\eta+x_{p_1}\varepsilon^*\delta^*a_1a_2a_3a_4a_5+x_{p_2}^{-1}\varepsilon^*\delta^*\beta\gamma+x_{p_3}\beta\gamma d_1d_2d_3d_4d_5.
\end{eqnarray}
If we apply the $\arc^{\operatorname{th}}$ QP-mutation to $\qstau$ we obtain the QP $(\mu_\arc(\qtau),\mu_\arc(\stau))$, where $\mu_\arc(\qtau)=\qsigma$ and
\begin{eqnarray}\nonumber
\mu_\arc(\stau) & = & a_1\nu d_5+a_5d_1\eta+x_{p_1}x_{p_2}\varepsilon^*\delta^* a_1a_2a_3a_4a_5+x_{p_2}\varepsilon^*\delta^*\beta\gamma
+x_{p_2}x_{p_3}\beta\gamma d_1d_2d_3d_4d_5\\
\nonumber
&&+x_{p_1}x_{p_2}x_{p_3} d_1d_2d_3d_4d_5a_1a_2a_3a_4a_5.
\end{eqnarray}
After a quick look at $\ssigma$ and $\mu_i(\stau)$ we see that not only do the scalars accompanying the cycles around the punctures not match, the set of (rotation classes of) cycles appearing in $\ssigma$ and $\mu_i(\stau)$ do not match either: the cycle $d_1d_2d_3d_4d_5a_1a_2a_3a_4a_5$ does not appear in $\ssigma$.

The appearance of the aforementioned cycle in $\mu_i(\stau)$ is general: whenever we have a self-folded triangle in an ideal triangulation $\tau$, with the property that the digon surrounding the self-folded triangle is entirely contained in the interior of $\surf$ (so that all marked points appearing in the digon are punctures), the QP-mutation with respect to the folded side will involve all the cycles appearing in $\ssigma$ and the cycle surrounding the digon. This may suggest that our definition of $\ssigma$ is ``wrong" and ``should" be changed taking the alluded cycle into account. However, if we make such change, after a few flips/mutations we will soon lose control over the potentials, for ``many" ``strange" cycles will have to be taken into account in each potential (not only ``many" at a time, also ``stranger and stranger").

There is nevertheless another possibility: the definition of $\ssigma$ may indeed be ``the right one". If so, then we have to show that there is an $R$-algebra isomorphism $\varphi:\RA{\mu_i(\qtau)}\rightarrow \RA{\qsigma}$ that simultaneously eliminates the digon-surrounding cycle and makes the coefficients of puncture-surrounding cycles match.

In the very particular situation of Figure \ref{Fig:hexagon_3_punct}, the $R$-algebra isomorphism $\varphi:\RA{\qsigma}\rightarrow \RA{\mu_\arc(\qtau)}$ acting by
\begin{equation}\label{eq:hexagon_3_punct_right_equiv}
\nu \mapsto \nu+x_{p_1}x_{p_2}x_{p_3}a_2a_3a_4a_5d_1d_2d_3d_4, \ \ \ \varepsilon^*\mapsto x_{p_2}\varepsilon^*, \ \ \ \beta\mapsto x_{p_2}\beta,
\end{equation}
and the identity on the remaining arrows of $\qsigma$, is a right-equivalence from $\qssigma$ to $(\mu_\arc(\qtau),\mu_\arc(\stau))$ (and so, $\varphi^{-1}$ eliminates the digon-surrounding cycle and makes the coefficients match). But if we have a careful look at the potentials $\ssigma$ and $\mu_i(\stau)$, we realize that the reason why $\varphi$ serves as a right-equivalence in this particular situation is the combination of the following two facts:
\begin{itemize}
\item The arrow $\nu$ appears in only one term of $\ssigma$;
\item $\partial_\nu(\ssigma)$ is a factor of the cycle in $\mu_i(\stau)$ that does not appear in $\ssigma$.
\end{itemize}
Had $\nu$ appeared in two terms of $\ssigma$, then the rule \eqref{eq:hexagon_3_punct_right_equiv} would have added two terms to $\ssigma$, not only one, and would have failed to be a right-equivalence $\qssigma\rightarrow(\mu_\arc(\qtau),\mu_\arc(\stau))$.

Now, the reason why $\nu$ appears in only one term of $\ssigma$ is the fact that it arises from the (signed) adjacency between two arcs at a marked point lying on $\partial\Sigma$. All in all, this means that for \eqref{eq:hexagon_3_punct_right_equiv} to indeed define a desired right-equivalence, we ultimately relied on the fact that the underlying surface has non-empty boundary.
We want to show that the desired right-equivalence exists independently of any assumption on the boundary.

\section{Popped potentials: Definition and flip/mutation dynamics}\label{sec:def-of-popped-potentials}

Here we go back to formal considerations. Let $\surf$ be a surface. Throughout this section we do not impose any assumption on the boundary of $\Sigma$ nor on the number of punctures.

Suppose $i$ is the folded side of a self-folded triangle of an ideal triangulation $\tau$. Let $j\in\tau$ be the loop that cuts out a once-punctured monogon and encloses $i$, and $q\in\punct$ be the puncture that lies inside the monogon cut out by $j$. Let $\pi_{i,j}^\tau:\tau\to\tau$ be the bijection that fixes $\tau\setminus\{i,j\}$ pointwise and interchanges $i$ and $j$. Then $\pi_{i,j}^\tau$ extends uniquely to a quiver automorphism of $\qtau$ that acts as the identity on all the arrows of $\qtau$ that are not incident to $i$ nor to $j$. This quiver automorphism yields a $\field$-algebra automorphism of $\RA{\qtau}$. In a slight notational abuse we shall denote this quiver and algebra automorphisms by $\pi_{i,j}^\tau$ as well.

\begin{remark}\label{rem:pi-is-not-R-alg-auto} Notice that $\pi_{i,j}^\tau$ is not an $R$-algebra automorphism of $\RA{\qtau}$.
\end{remark}

\begin{defi}\label{def:popped-potential} Let $\mathbf{x}=(x_p)_{p\in\punct}$ be a choice of non-zero scalars. With the notations from the preceding paragraph, let $S(\tau,\mathbf{y})$ be the potential associated to $\tau$ with respect to the choice of scalars $\mathbf{y}=(y_p)_{p\in\punct}$ defined by $y_p=(-1)^{\delta_{p,q}}x_p$ for all $p\in\punct$, where $\delta_{p,q}$ is the \emph{Kronecker delta}. The \emph{popped potential} of the quadruple $(\tau,\mathbf{x},i,j)$ is the potential $\wtau=\pi_{i,j}^\tau(S(\tau,\mathbf{y}))\in \RA{\qtau}$.
\end{defi}

\begin{remark}\label{rem:on-popped-pots}\begin{enumerate}\item The $\field$-algebra automorphism $\pi_{i,j}^\tau$ of $\RA{\qtau}$ has only been used to define $\wtau$, and it is actually possible to define $\wtau$ without any mention of $\pi_{i,j}^\tau$ (we have not done so in order to avoid an unnecessarily cumbersome definition), but we have not claimed that $\pi_{i,j}^\tau$ is a right-equivalence: it cannot be, precisely because it does not act as the identity on the vertex span $R$ (see \cite[Definition 4.2]{DWZ1}).
\item Note that an ideal triangulation $\tau$ can give rise to several popped potentials: each self-folded triangle of $\tau$ gives rise to one such. So, when speaking of the popped potential associated to an ideal triangulation, one has to specify the self-folded triangle with respect to which one is taking such potential.
\item The ultimate goal of this paper is to prove that whenever two tagged triangulations $\tau$ and $\sigma$ are related by a flip, the QPs $\qstau$ and $\qssigma$ are related by the corresponding QP-mutation (a fact that at the moment we know to be true only when both $\tau$ and $\sigma$ are ideal triangulations, see Theorem \ref{thm:ideal-flips<->mutations} and Remark \ref{rem:k-is-not-a-folded-side}). The popped potentials $\wtau$ will be used as a tool to achieve our goal: As we have seen in Section \ref{sec:folded-sides-the-problem}, showing that the flip of a folded side of an ideal triangulation $\tau$ results in a QP related to $\qstau$ by the corresponding QP-mutation may be way less straightforward or obvious than we would like. So, what we will do is show that whenever $i$ and $j$ are arcs forming a self-folded triangle of an ideal triangulation $\tau$, with $i$ as folded side and $j$ as enclosing loop, the QPs $\qstau$ and $\qwtau$ are right-equivalent (see Theorem \ref{thm:popping-is-right-equiv}); then we will show that the QP $\mu_i\qwtau$ is right-equivalent to $\qssigma$, where $\sigma$ is the tagged triangulation obtained from $\tau$ by flipping the arc $i$ (see Proposition \ref{prop:flip-mut-folded-sides} --the formal statement of this proposition uses a more accurate notation). From this and Theorem \ref{thm:ideal-flips<->mutations} we will deduce that if we flip any arc of an ideal triangulation $\tau$, the resulting QP is related to $\qstau$ by the corresponding QP-mutation, regardless of whether the flipped arc was or not a folded side of $\tau$  (see Theorem \ref{thm:leaving-positive-stratum}). Finally, we will reduce to Theorem \ref{thm:leaving-positive-stratum} the proof of Theorem \ref{thm:tagged-flips<->mutations}, which states that whenever two tagged triangulations $\tau$ and $\sigma$ are related by a flip, the QPs $\qstau$ and $\qssigma$ are related by the corresponding QP-mutation.
\end{enumerate}
\end{remark}

\begin{ex}\label{ex:stau-wtau} On the left side of Figure \ref{Fig:3_punctured_torus_ideal_triang_and_quiver} we can see a three-times-punctured torus (with no boundary), with the scalars $x_p$, $p\in\punct$, labeling the three punctures, and an ideal triangulation $\tau$ of it that has a (unique) self-folded triangle. On the right side we can see the quiver $\qtau$ drawn on the surface.
        \begin{figure}[!h]
                \caption{}\label{Fig:3_punctured_torus_ideal_triang_and_quiver}
                \centering
                \includegraphics[scale=.5]{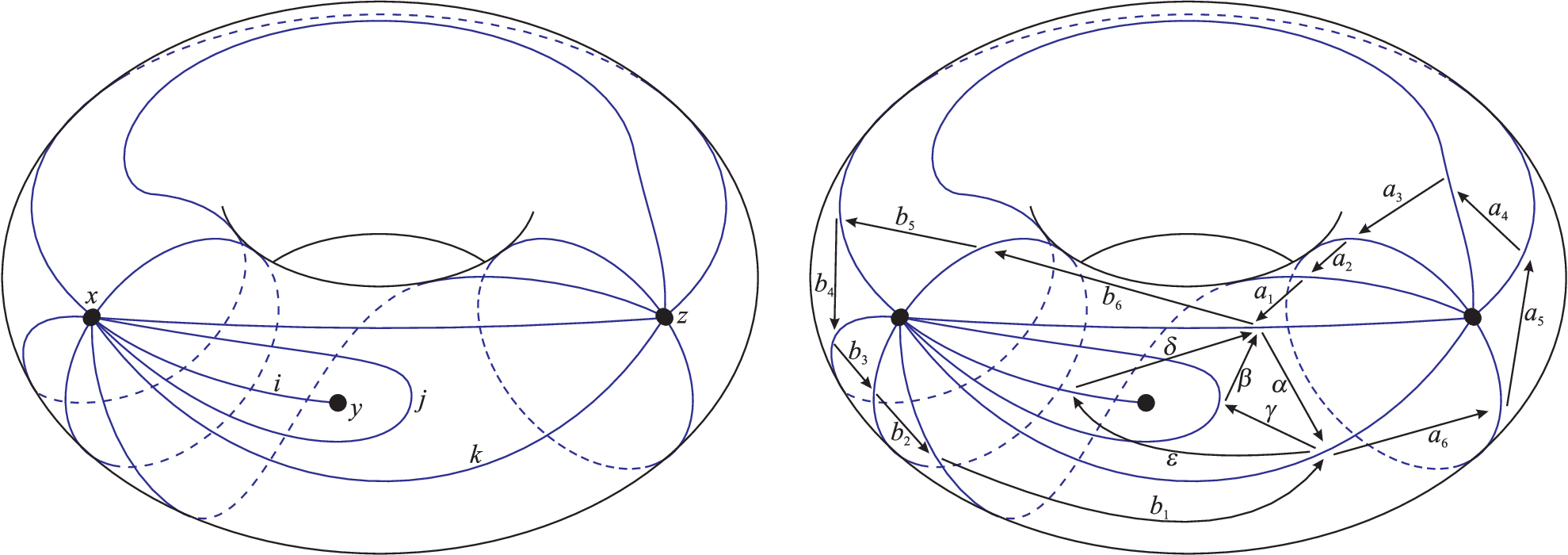}
        \end{figure}
Definition \ref{def:QP-of-tagged-triangulation} tells us that
$$
\stau=\alpha\beta\gamma+a_1b_2b_6+a_2a_6b_1+a_3b_4a_5+a_4b_5b_3+x\delta\varepsilon b_1b_2b_3b_4b_5b_6-y^{-1}\alpha\delta\varepsilon+z\alpha a_1a_2a_3a_4a_5a_6,
$$
whereas Definition \ref{def:popped-potential} tells us that the popped potential is
$$
\wtau=\alpha\delta\varepsilon+a_1b_2b_6+a_2a_6b_1+a_3b_4a_5+a_4b_5b_3+x\beta\gamma b_1b_2b_3b_4b_5b_6+y^{-1}\alpha\beta\gamma+z\alpha a_1a_2a_3a_4a_5a_6.
$$
$\hfill{\blacktriangle}$
\end{ex}

\begin{lemma}\label{lemma:pop-commutes-with-mutation} Let $\tau$ an ideal triangulation, and $\mathbf{x}=(x_p)_{p\in\punct}$ and $i,j\in\tau$ be as above. If $k\in\tau\setminus\{i,j\}$ is an arc such that $\sigma=f_k(\tau)$ happens to be an ideal triangulation, then $\mu_k(\qtau,\wtau)$ is right-equivalent to $(\qsigma,\wsigma)$, where $\wsigma$ is the popped potential of the quadruple $(\sigma,\mathbf{x},i,j)$.
\end{lemma}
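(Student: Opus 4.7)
The plan is to deduce the lemma from Theorem~\ref{thm:ideal-flips<->mutations} (applied with the modified scalars $\mathbf{y}$) by conjugating the resulting right-equivalence with the swap automorphism $\pi_{i,j}$. The key geometric observation making this viable is that since $k\in\tau\setminus\{i,j\}$ and $\sigma=f_k(\tau)$ is again an ideal triangulation, the self-folded triangle of $\tau$ with folded side $i$ and enclosing loop $j$ persists as a self-folded triangle of $\sigma$; consequently the swap extends to quiver automorphisms of $\qtau$, of $\qsigma$ and of $\mu_k(\qtau)$, giving $\field$-algebra (but not $R$-algebra) automorphisms $\pi_{i,j}^\tau$, $\pi_{i,j}^\sigma$ and $\pi_{i,j}^{\mu_k(\tau)}$ of the corresponding complete path algebras, each swapping the vertex idempotents $e_i$ and $e_j$ and fixing every other idempotent.

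The technical heart of the argument, and what I expect to be the main obstacle, is the commutativity
\[
\mu_k\bigl(\pi_{i,j}^\tau(P)\bigr) \quad\text{is right-equivalent to}\quad \pi_{i,j}^{\mu_k(\tau)}\bigl(\mu_k(P)\bigr)
\]
as QPs on $\mu_k(\qtau)$, for every potential $P$ on $\qtau$. At the level of premutation $\widetilde{\mu}_k$ this holds on the nose, because premutation is a purely local combinatorial operation at vertex $k$ (reversing arrows at $k$ and adding composite arrows through $k$), and $\pi_{i,j}$ acts trivially at vertex $k$ since $k\notin\{i,j\}$. Passing from premutation to mutation requires invoking the uniqueness (up to right-equivalence) of the Derksen--Weyman--Zelevinsky splitting into trivial and reduced parts: because $\pi_{i,j}$ is a degree-preserving $\field$-algebra automorphism that preserves cyclical equivalence, it carries trivial parts to trivial parts and reduced parts to reduced parts, yielding the claimed right-equivalence. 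The subtle point here is bookkeeping the distinction between $\field$-algebra and $R$-algebra isomorphisms under the swap, since $\pi_{i,j}$ does not fix the vertex span $R$.

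With these ingredients in place, I would apply Theorem~\ref{thm:ideal-flips<->mutations} to $\tau$ and $\sigma$ with respect to the scalars $\mathbf{y}$, obtaining an $R$-algebra right-equivalence
\[
\varphi\colon \mu_k\bigl(\qtau, S(\tau,\mathbf{y})\bigr) \longrightarrow \bigl(\qsigma, S(\sigma,\mathbf{y})\bigr),
\]
and then form the conjugate $\widetilde{\varphi} := \pi_{i,j}^\sigma \circ \varphi \circ (\pi_{i,j}^{\mu_k(\tau)})^{-1}$. Although each individual $\pi_{i,j}$ is only a $\field$-algebra automorphism, the two applications of $\pi_{i,j}$ in $\widetilde{\varphi}$ cancel on $R$, so $\widetilde{\varphi}$ is an honest $R$-algebra isomorphism. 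Chasing the popped potential through this composition and invoking the commutativity of the previous paragraph,
\[
\mu_k(\wtau) = \mu_k\bigl(\pi_{i,j}^\tau(S(\tau,\mathbf{y}))\bigr) \;\longleftrightarrow\; \mu_k\bigl(S(\tau,\mathbf{y})\bigr) \;\stackrel{\varphi}{\longmapsto}\; S(\sigma,\mathbf{y}) \;\stackrel{\pi_{i,j}^\sigma}{\longmapsto}\; \wsigma,
\]
(the first arrow being the right-equivalence from Step~2), shows that $\widetilde{\varphi}$ is the desired right-equivalence $\mu_k(\qtau,\wtau) \to (\qsigma,\wsigma)$.
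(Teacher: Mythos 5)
Your proposal is correct and follows essentially the same route as the paper: apply Theorem \ref{thm:ideal-flips<->mutations} with the scalars $\mathbf{y}$, observe that the swap commutes with premutation on the nose (since it acts trivially at the vertex $k$ and fixes $\triangle_k(\qtau)$), conjugate the resulting right-equivalence by the swap so that the two non-$R$-linear automorphisms cancel on the vertex span, and finish with the Derksen--Weyman--Zelevinsky Splitting Theorem. The only cosmetic difference is that the paper carries out the whole argument on the premutated quiver with a single automorphism $\psi$ and splits off the trivial part $(\psi(C),\psi(T))$ at the very end, whereas you phrase the intermediate commutativity for reduced mutations of arbitrary potentials; the underlying mechanism is identical.
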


\begin{proof}
Let $\mathbf{y}=((-1)^{\delta_{p,q}}x_p)_{p\in\punct}$ be as in Definition \ref{def:popped-potential}.
Notice that the bijection $\pi=\pi_{i,j}^\tau:\tau\to\tau$ induces a quiver automorphism $\psi$ of $\widetilde{\mu}_k(\qtau)$. Following a slight notational abuse, we shall denote also by $\psi$ the induced $K$-algebra automorphism of $\RA{\widetilde{\mu}_k(\qtau)}$ (which is not an $R$-algebra automorphism). It is straightforward to see that $\psi([S(\tau,\mathbf{y})])=[\pi(S(\tau,\mathbf{y}))]$ and $\psi(\triangle_k(Q))=\triangle_k(Q)$, and hence
\begin{eqnarray}\nonumber
\psi(\widetilde{\mu}_{k}(S(\tau,\mathbf{y}))) & = & \psi\left([S(\tau,\mathbf{y})]+\triangle_k(\qtau)\right) \ \ \ \ \ \text{by \cite[Equations (5.8) and (5.9)]{DWZ1}}\\
\nonumber & = & [\pi(S(\tau,\mathbf{y}))]+\triangle_k(\qtau)\\
\nonumber & = & \widetilde{\mu}_k(\pi(S(\tau,\mathbf{y}))) \ \ \ \ \ \ \ \ \ \ \ \ \ \ \ \ \ \ \ \text{by \cite[Equations (5.8) and (5.9)]{DWZ1}}\\
\nonumber & = &\widetilde{\mu}_k(\wtau)  \ \ \ \ \ \ \ \ \ \ \ \ \ \ \ \ \ \text{by Definition \ref{def:popped-potential}}.
\end{eqnarray}

Since $\sigma=f_k(\tau)$ is an ideal triangulation, Theorem \ref{thm:ideal-flips<->mutations} guarantees the existence of a right-equivalence $\varphi:\widetilde{\mu}_{k}(\qtau,S(\tau,\mathbf{y}))\rightarrow(Q(\sigma),S(\sigma,\mathbf{y}))\oplus(C,T)$, where $(C,T)$ is a trivial QP. The restriction of the quiver automorphism $\psi$ of $\widetilde{\mu}_k(\qtau)=(Q(\sigma),S(\sigma,\mathbf{y}))\oplus(C,T)$ to $Q(\sigma)$ is precisely $\pi^\sigma_{i,j}$.
This, together with the obvious fact that $\psi\varphi\psi^{-1}$ acts as the identity on the vertex span $R$, implies that $\psi\varphi\psi^{-1}:\RA{\widetilde{\mu}_k(\qtau)}\rightarrow \RA{\qsigma\oplus C}$ is an $R$-algebra isomorphism such that $\psi\varphi\psi^{-1}(\widetilde{\mu}_k(\wtau))=\psi\varphi(\widetilde{\mu}_{k}(S(\tau,\mathbf{y})))$ is cyclically-equivalent to
$\psi(S(\sigma,\mathbf{y})+T)=\wsigma+\psi(T)$. That is, $\psi\varphi\psi^{-1}$ is a right-equivalence $\psi\varphi\psi^{-1}:\widetilde{\mu}_{k}(\qtau,\wtau)\rightarrow(\qsigma,\wsigma)\oplus(\psi(C),\psi(T))$. Since $(\qsigma,\wsigma)$ is a reduced QP and $(\psi(C),\psi(T))$ is a trivial QP, Derksen-Weyman-Zelevinsky's Splitting Theorem \cite[Theorem 4.6]{DWZ1} allows us to conclude that the QPs $\mu_k(\qtau,\wtau)$ and $(\qsigma,\wsigma)$ are right-equivalent.
\end{proof}

Intuitively and roughly speaking, Lemma \ref{lemma:pop-commutes-with-mutation} says that for a fixed self-folded triangle $\triangle$, the popped QPs $(\qtau,\wtau)$ associated to ideal triangulations containing $\triangle$ have the same QP-mutation dynamics (ie, compatible with flips) as the QPs $(\qtau,\stau)$ as long as the two arcs in $\triangle$ are never flipped.

\begin{ex} Let us illustrate the proof of Lemma \ref{lemma:pop-commutes-with-mutation} in the situation of Example \ref{ex:stau-wtau}. The quiver automorphism $\pi=\pi_{i,j}^{\tau}:\qtau\to\qtau$ is easily seen to act by the rules
$$
\pi: \ \ \ \beta\mapsto\delta, \ \delta\mapsto\beta, \ \gamma\mapsto\varepsilon, \ \varepsilon\mapsto\gamma.
$$
This implies that the quiver automorphism $\psi:\widetilde{\mu}_k(\qtau)\to\widetilde{\mu}_k(\qtau)$ acts by the rules
$$
\psi: \ \ \ \beta\mapsto\delta, \ \delta\mapsto\beta, \ \gamma^*\mapsto\varepsilon^*, \ \varepsilon^*\mapsto\gamma^*, \ [\gamma\alpha]\mapsto[\varepsilon\alpha], \ [\varepsilon\alpha]\mapsto[\gamma\alpha], \ [\gamma b_1]\mapsto[\varepsilon b_1], \ [\varepsilon b_1]\mapsto[\gamma b_1],
$$
and hence
\begin{eqnarray}\nonumber
\psi([S(\tau,\mathbf{y})]) & = &
\psi(\beta[\gamma\alpha]+a_1b_2b_6+a_2[a_6b_1]+a_3b_4a_5+a_4b_5b_3\\
\nonumber &&+x\delta[\varepsilon b_1]b_2b_3b_4b_5b_6+y^{-1}\delta[\varepsilon\alpha]+za_1a_2a_3a_4a_5[a_6\alpha])\\
\nonumber
 & = &
\delta[\varepsilon\alpha]+a_1b_2b_6+a_2[a_6b_1]+a_3b_4a_5+a_4b_5b_3\\
\nonumber&&+x\beta[\gamma b_1]b_2b_3b_4b_5b_6+y^{-1}\beta[\gamma\alpha]+za_1a_2a_3a_4a_5[a_6\alpha].
\end{eqnarray}

Since
\begin{eqnarray}\nonumber
[\pi(S(\tau,\mathbf{y}))] & = &
[\alpha\delta\varepsilon+a_1b_2b_6+a_2a_6b_1+a_3b_4a_5+a_4b_5b_3\\
\nonumber&&+x\beta\gamma b_1b_2b_3b_4b_5b_6+y^{-1}\alpha\beta\gamma+z\alpha a_1a_2a_3a_4a_5a_6]\\
\nonumber
 & = &
\delta[\varepsilon\alpha]+a_1b_2b_6+a_2[a_6b_1]+a_3b_4a_5+a_4b_5b_3\\
\nonumber&&+x\beta[\gamma b_1]b_2b_3b_4b_5b_6+y^{-1}\beta[\gamma\alpha]+za_1a_2a_3a_4a_5[a_6\alpha],
\end{eqnarray}
the equality $\psi([S(\tau,\mathbf{y})])=[\pi(S(\tau,\mathbf{y}))]$ is obvious.

Turning to $\triangle_k(\qtau)$ and $\psi(\triangle_k(\qtau))$, we have
\begin{eqnarray}\nonumber
\triangle_k(\qtau) & = & \gamma^*[\gamma\alpha]\alpha^*+\varepsilon^*[\varepsilon\alpha]\alpha^*+a_6^*[a_6\alpha]\alpha^*
+\gamma^*[\gamma b_1]b_1^*+\varepsilon^*[\varepsilon b_1]b_1^*+a_6^*[a_6 b_1]b_1^*
\end{eqnarray}
and hence
\begin{eqnarray}\nonumber
\psi(\triangle_k(\qtau)) & = & \varepsilon^*[\varepsilon\alpha]\alpha^*+\gamma^*[\gamma\alpha]\alpha^*+a_6^*[a_6\alpha]\alpha^*
+\varepsilon^*[\varepsilon b_1]b_1^*+\gamma^*[\gamma b_1]b_1^*+a_6^*[a_6 b_1]b_1^*
\end{eqnarray}
obviously equals $\triangle_k(\qtau)$.

Therefore, as stated in the proof of Lemma \ref{lemma:pop-commutes-with-mutation}, we have $\psi(\widetilde{\mu}_k(S(\tau,\mathbf{y})))=\psi([S(\tau,\mathbf{y})]+\triangle_k(\qtau))=
[\pi(S(\tau,\mathbf{y}))]+\triangle_k(\qtau)=\widetilde{\mu}_k(\pi(S(\tau,\mathbf{y})))=\widetilde{\mu}_k(\wtau)$.

On the other hand, in Figure \ref{Fig:3_punctured_torus_ideal_triang_and_quiver_flipped} we see the ideal triangulation $\sigma=f_k(\tau)$ that results from flipping the arc $k$ of the ideal triangulation $\tau$ shown in Figure \ref{Fig:3_punctured_torus_ideal_triang_and_quiver}.
        \begin{figure}[!h]
                \caption{$\sigma=f_k(\tau)$ (cf. Figure \ref{Fig:3_punctured_torus_ideal_triang_and_quiver})}\label{Fig:3_punctured_torus_ideal_triang_and_quiver_flipped}
                \centering
                \includegraphics[scale=.5]{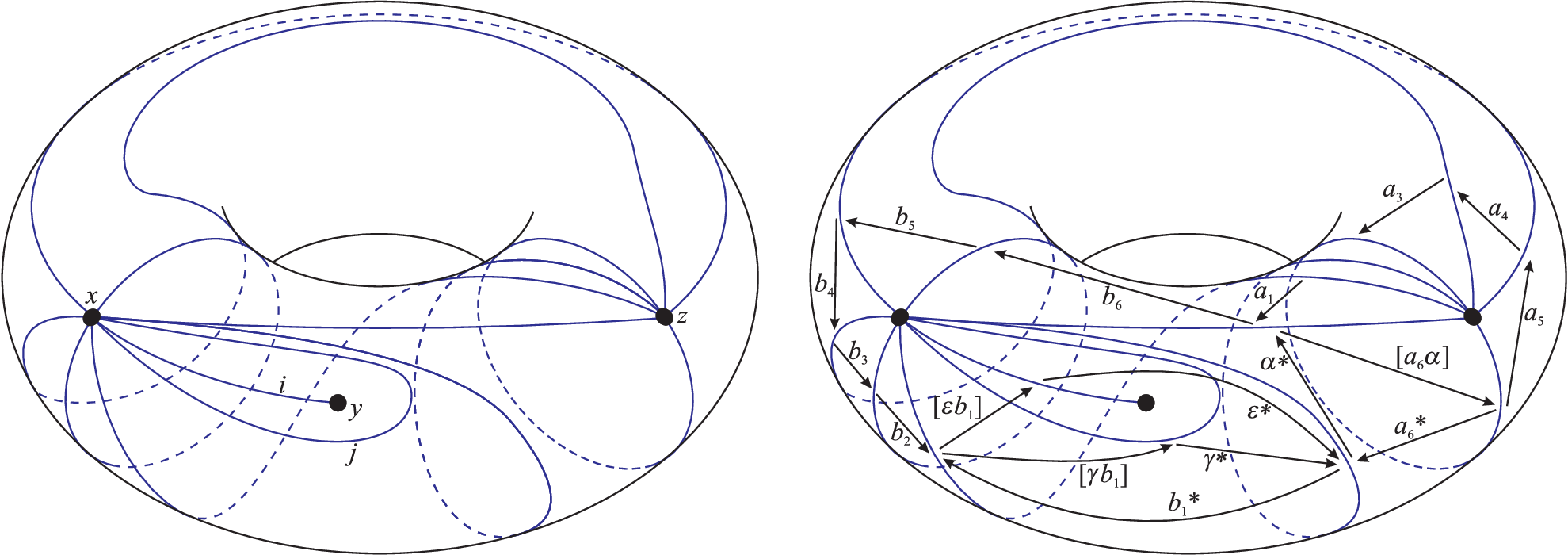}
        \end{figure}\\
Since it is the scalars $x_p$, $p\in\punct$, that are labeling the punctures, the potential associated to $\sigma$ with respect to the choice $\mathbf{y}=((-1)^{\delta_{p,q}}x_p)_{p\in\punct}$ is
\begin{eqnarray}\nonumber
S(\sigma,\mathbf{y}) & = &
\gamma^*[\gamma b_1]b_1^*+a_6^*[a_6\alpha]\alpha^*+a_1b_2b_6+a_3b_4a_5+a_4b_5b_3\\
\nonumber
& & +x[\varepsilon b_1]b_2b_3b_4b_5b_6\alpha^*\varepsilon^*+y^{-1}\varepsilon^*[\varepsilon b_1]b_1^*+za_1a_2a_3a_4a_5[a_6\alpha],
\end{eqnarray}
hence popped potential of the quadruple $(\sigma,\mathbf{x},i,j)$ is
\begin{eqnarray}\nonumber
\wsigma & = &
\varepsilon^*[\varepsilon b_1]b_1^*+a_6^*[a_6\alpha]\alpha^*+a_1b_2b_6+a_3b_4a_5+a_4b_5b_3\\
\nonumber
& & +x[\gamma b_1]b_2b_3b_4b_5b_6\alpha^*\gamma^*+y^{-1}\gamma^*[\gamma b_1]b_1^*+za_1a_2a_3a_4a_5[a_6\alpha].
\end{eqnarray}

The quiver $C=(Q_0,C_1,t,h)$, the potential $T$ and the right-equivalence $\varphi:\widetilde{\mu}_k(\qtau,S(\tau,\mathbf{y}))\to(\qsigma,S(\sigma,\mathbf{y}))\oplus(C,T)$ can be chosen so that \begin{eqnarray}\nonumber
C_1 & = & \{[\gamma\alpha], \beta, [\varepsilon\alpha], \delta, [a_6b_1], a_2\}, \ \text{with the functions $t,h:C_1\to Q_0$ defined in the obvious way},\\
\nonumber
T & = & \beta[\gamma\alpha]+a_2[a_6a_1]+y^{-1}\delta[\varepsilon\alpha],\\
\nonumber
\varphi & : & \beta\mapsto\beta+\alpha^*\gamma^*, \ a_2\mapsto a_2-b_1^*a_6^*, \ [a_6b_1]\mapsto[a_6b_1]+za_3a_4a_5[a_6\alpha]a_1, \ \delta\mapsto\delta+\alpha^*\varepsilon^*,\\
\nonumber
 & & [\varepsilon\alpha]\mapsto[\varepsilon\alpha]-xy[\varepsilon b_1]b_2b_3b_4b_5b_6, \ \varepsilon^*\mapsto y^{-1}\varepsilon^*, \ \alpha^*\mapsto-\alpha^*, [a_6\alpha]\mapsto-[a_6\alpha].
 \end{eqnarray}
This means that the right-equivalence $\psi\varphi\psi^{-1}:\widetilde{\mu}_k(\qtau,\wtau)\to(\qsigma,\wsigma)\oplus(\psi(C),\psi(T))$ acts according to the rules
\begin{eqnarray}\nonumber
\psi\varphi\psi^{-1} & : & \delta\mapsto\delta+\alpha^*\varepsilon^*, \ a_2\mapsto a_2-b_1^*a_6^*, \ [a_6b_1]\mapsto[a_6b_1]+za_3a_4a_5[a_6\alpha]a_1, \ \beta\mapsto\beta+\alpha^*\gamma^*,\\
\nonumber
 & & [\gamma\alpha]\mapsto[\gamma\alpha]-xy[\gamma b_1]b_2b_3b_4b_5b_6, \ \gamma^*\mapsto y^{-1}\gamma^*, \ \alpha^*\mapsto-\alpha^*, \ [a_6\alpha]\mapsto-[a_6\alpha].
\end{eqnarray}
In other words, the desired right-equivalence between $\mu_k(\qtau,\wtau)$ and $(\qsigma,\wsigma)$ can be obtained from the existent one between $\mu_k(\qtau,S(\tau,\mathbf{y}))$ and $(\qsigma,S(\sigma,\mathbf{y}))$ by formally applying the symbol exchanges $\beta\leftrightarrow\delta$ and $\gamma\leftrightarrow\varepsilon$.  $\hfill{\blacktriangle}$
\end{ex}

\section{Pop is right-equivalence: Statement and proof}\label{sec:pop-is-re}

Let $\surf$ be a surface. We assume that
\begin{equation}\label{eq:not-sphere-with-few-punctures}
\text{$\surf$ is not a sphere with less than 6 punctures}.
\end{equation}
This is actually an extremely mild assumption. Indeed, all the following surfaces satisfy \eqref{eq:not-sphere-with-few-punctures}:
\begin{eqnarray}\label{eq:no-boundary}
&&\text{all surfaces with non-empty boundary, with or without punctures, regardless of their genus;}\\
\label{eq:positive-genus}
&&\text{all positive-genus surfaces without boundary, and any number of punctures;}\\
\label{eq:spheres}
&&\text{all spheres with at least 6 punctures.}
\end{eqnarray}

\subsection{Statement}\label{subsec:Pop-Thm-statement}

The main result of the section (and instrumentally the main result of the paper) is the Popping Theorem, which we now state.

\begin{thm}[Popping Theorem]\label{thm:popping-is-right-equiv}
Suppose that $\surf$ satisfies \eqref{eq:not-sphere-with-few-punctures}. Let $\sigma$ be any ideal triangulation of $\surf$. If $i\in\sigma$ is a folded side of $\sigma$, enclosed by the loop $j$, then $\qssigma$ is right-equivalent to $\qwsigma$, where $\wsigma$ is the popped potential of the quadruple $(\sigma,\mathbf{x},i,j)$.
\end{thm}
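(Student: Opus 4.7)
The overall strategy is to reduce the statement for an arbitrary ideal triangulation $\sigma$ containing a self-folded triangle to the existence of a \emph{single} such triangulation for which the right-equivalence can be verified directly. Fix the self-folded triangle $\triangle$ of $\sigma$ with folded side $i$ enclosed by the loop $j$, and suppose one has produced some other ideal triangulation $\tau$ that also contains $\triangle$ and for which $\qstau$ and $\qwtau$ are right-equivalent. By the refined flip-connectivity statement in Proposition \ref{prop:ideal-triangs-seqs-of-flips}, there is an ideal flip-sequence $\tau = \tau_0, \tau_1, \ldots, \tau_\ell = \sigma$ with $\tau\cap\sigma \subseteq \tau_k$ for every $k$; since $i,j\in\tau\cap\sigma$, neither arc is flipped along this sequence. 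At each step, Theorem \ref{thm:ideal-flips<->mutations} gives a right-equivalence between $\mu_{k}(Q(\tau_{k-1}), S(\tau_{k-1}, \mathbf{x}))$ and $(Q(\tau_k), S(\tau_k, \mathbf{x}))$, while Lemma \ref{lemma:pop-commutes-with-mutation} gives the analogous right-equivalence for the popped potentials. Because QP-mutation sends right-equivalent QPs to right-equivalent QPs, one may propagate the hypothesized right-equivalence at $\tau$ along the flip-sequence to obtain the desired right-equivalence at $\sigma$.

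The remaining task is then the explicit construction of one such $\tau$, to be done separately for each topological class covered by \eqref{eq:not-sphere-with-few-punctures}. For a positive-genus empty-boundary surface one exploits the genus to place $\triangle$ inside a handle and complete it to an ideal triangulation in which the arrows out of $j$ interact with the rest of $Q(\tau)$ in a controlled way; the goal is to exhibit a unitriangular $R$-algebra automorphism $\varphi$ of $\completeRQ$ such that $\varphi(\stau)$ is cyclically equivalent to $\wtau$. The substitutions constituting $\varphi$ are designed to kill, degree by degree, the discrepancies between $\stau$ and $\wtau$ coming from the sign change at the puncture $q$ enclosed by $j$ (which, by Definition \ref{def:popped-potential}, is concentrated in the cycles surrounding $p$ and $q$ and in the ``triangle term'' of $\triangle$). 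Convergence of the resulting infinite composition is guaranteed by Lemma \ref{lemma:well-defined-limit-automorphism}, using that successive corrections lie in strictly increasing powers of $\maxid$.

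The sphere case, addressed in Subsection \ref{subsec:proof-Pop-Thm-spheres}, is handled by the same strategy but requires more care, because in genus zero there is no handle to ``hide'' the correction in; here the hypothesis of at least six punctures is precisely what provides enough room to position a second self-folded triangle (or an appropriate configuration of ordinary arcs) whose arrows absorb the sign discrepancy. The non-empty boundary case is then deduced in Subsection \ref{subsec:proof-Pop-Thm-nonempty-boundary} by gluing disks along the boundary components of $\surf$ to reduce to the closed case, exploiting that restriction of QPs preserves right-equivalences, sends potentials of the form $\stau$ to potentials of the same form, and sends popped potentials to popped potentials. The main obstacle throughout is the explicit construction in the two empty-boundary cases: one must design the substitutions so that the cascading corrections cancel cyclically and not merely arrow-by-arrow, and one must check that the 5-punctured sphere is really the only obstruction---a reflection of the fact that there the available room for such a $\tau$ is insufficient to simultaneously neutralize all the new cycles introduced by the sign flip.
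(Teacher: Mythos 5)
Your proposal follows essentially the same route as the paper: first reduce to the existence of a single ideal triangulation $\tau$ containing $i$ and $j$ for which the pop induces a right-equivalence, propagating along an ideal flip-sequence that avoids $i$ and $j$ via Theorem \ref{thm:ideal-flips<->mutations}, Lemma \ref{lemma:pop-commutes-with-mutation} and the fact that QP-mutation preserves right-equivalence; then construct such a $\tau$ explicitly in the positive-genus and sphere cases by an infinite composition of unitriangular automorphisms converging via Lemma \ref{lemma:well-defined-limit-automorphism}, and deduce the non-empty boundary case by gluing and restriction. This matches the paper's argument in structure and in all essential steps.
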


\begin{proof} Suppose for a moment that there exists an ideal triangulation $\tau$ with $i,j\in\tau$, such that the pop of $(i,j)$ in $\tau$ induces right-equivalence, that is, such that
\begin{eqnarray}\label{eq:pop-is-re-for-tau}
&&\text{$\qstau$ is right-equivalent to $\qwtau$,}
\end{eqnarray}
where $\qwtau$ is the popped potential of the quadruple $(\tau,\mathbf{x},i,j)$. By Proposition \ref{prop:ideal-triangs-seqs-of-flips}, there exists an ideal flip-sequence $(\sigma_0,\sigma_1,\ldots,\sigma_s)$ with $\sigma_0=\sigma$ and $\sigma_s=\tau$, such that $i,j\in\sigma_r$ for all $r=0,\ldots,s$. Let $k_1\in\sigma_1,k_2\in\sigma_2,\ldots,k_s\in\sigma_s$ be the (ordinary) arcs such that $\sigma_{r-1}=f_{k_r}(\sigma_r)$ for $r=1,\ldots,s$. Then, using the symbol $\simeq$ to abbreviate ``is right-equivalent to",
\begin{eqnarray}
\nonumber\qssigma&\simeq&\mu_{k_1}\mu_{k_2}\ldots\mu_{k_s}\qstau \ \ \ \ \ \ \ \ \ \ \ \text{(by Theorem \ref{thm:ideal-flips<->mutations})}\\
\nonumber&\simeq&\mu_{k_1}\mu_{k_2}\ldots\mu_{k_s}\qwtau \ \ \ \ \ \text{(by \eqref{eq:pop-is-re-for-tau} and \cite[Corollary 5.4]{DWZ1})}\\
\nonumber&\simeq&\qwsigma \ \ \ \ \ \ \ \ \ \ \ \ \ \ \ \ \ \ \ \ \ \ \text{(by Lemma \ref{lemma:pop-commutes-with-mutation}).}
\end{eqnarray}
Theorem \ref{thm:popping-is-right-equiv} is thus proved up to showing the existence of an ideal triangulation $\tau$ such that $i,j\in\tau$ and satisfying \eqref{eq:pop-is-re-for-tau}.
\end{proof}

We devote the rest of the section to provide the piece missing in the proof of the Popping Theorem \ref{thm:popping-is-right-equiv}, namely:

\begin{prop}\label{prop:existence-of-popping-triangulation} Suppose $\surf$ is not a sphere with less than 6 punctures. Let $i$ and $j$ be (ordinary) arcs on $\surf$, such that $j$ is a loop cutting out a once-punctured monogon, and $i$ is the unique arc that connects the basepoint of $j$ with the puncture enclosed by $j$ and is entirely contained in the once-punctured monogon cut out by $j$. Then there exists an ideal triangulation $\tau$ of $\surf$ such that $i,j\in\tau$ and satisfying \eqref{eq:pop-is-re-for-tau}.
\end{prop}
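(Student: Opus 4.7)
My plan is to treat separately the three cases \eqref{eq:no-boundary}, \eqref{eq:positive-genus}, and \eqref{eq:spheres} following the partition indicated in the introduction. The core of the work lies in the two empty-boundary cases, where the strategy is to exhibit a single carefully chosen ideal triangulation $\tau$ containing both $i$ and $j$ and to write down an explicit $R$-algebra automorphism of $R\langle\langle\qtau\rangle\rangle$ witnessing \eqref{eq:pop-is-re-for-tau}. The non-empty boundary case \eqref{eq:no-boundary} will then be reduced to the empty-boundary result by a capping argument.

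For the positive-genus empty-boundary case \eqref{eq:positive-genus}, I would pick $\tau$ so that the self-folded triangle cut out by $j$ is its unique self-folded triangle, and so that the remaining arcs of $\tau$ are routed around a nonseparating handle of $\surfnoM$ in such a way that each of the arrows $\alpha$, and the sides $\beta,\gamma$ and $\delta,\varepsilon$ of the enclosing digon, appears in as few terms of $\stau$ as the geometry allows. The passage from $\stau$ to $\wtau$ then combines two modifications: negating the scalar $x_q$ attached to the enclosed puncture $q$ (which flips the sign of $\widehat{S}^q$ and of any term $\widehat{U}^\triangle$ in which the monogon enclosed by $j$ is involved), and swapping $\{\beta,\gamma\}\leftrightarrow\{\delta,\varepsilon\}$ inside the two 3-cycles $\alpha\beta\gamma$ and $\alpha\delta\varepsilon$ through the self-folded triangle. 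With the minimal-occurrence choice of $\tau$, I would produce unitriangular $R$-algebra substitutions on $\beta,\gamma,\delta,\varepsilon$ (and on $\alpha$ if needed) whose composition absorbs all discrepancies, using Lemma \ref{lemma:well-defined-limit-automorphism} to take a limit of finite-depth corrections when a single substitution introduces tails of higher depth.

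For the sphere case \eqref{eq:spheres} the same philosophy applies, but with less topological freedom. Here the hypothesis of at least 6 punctures is used in an essential way, because one needs enough combinatorial room around the monogon cut out by $j$ to display an explicit $\tau$ whose remaining triangles can be balanced against the local data at $i,j$; this is presumably precisely why the 5-punctured sphere must be excluded from Theorem \ref{thm:popping-is-right-equiv}. Finally, for the non-empty boundary case \eqref{eq:no-boundary} I would apply the standard cap-and-restrict trick: glue a once-punctured disk along each boundary component of $\surfnoM$ to obtain a closed empty-boundary surface $(\Sigma',\marked')$ which, under the standing assumptions on $\surf$, still satisfies \eqref{eq:not-sphere-with-few-punctures}; extend $i,j$ to an ideal triangulation $\tau'$ of $\Sigma'$ by triangulating each capping disk; invoke the already-proved empty-boundary case to obtain a right-equivalence $(Q(\tau'),S(\tau',\mathbf{x}'))\simeq(Q(\tau'),W(\tau',\mathbf{x}'))$; and restrict to $\tau:=\tau'\cap\arcsinsurf$, using that restriction preserves right-equivalences and sends $S$-type potentials, respectively popped potentials, to potentials of the same type.

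The main obstacle is the explicit construction of the right-equivalence in the two empty-boundary cases. The difficulty is that the popping operation not only swaps the arrows $\beta\leftrightarrow\delta$ and $\gamma\leftrightarrow\varepsilon$ but also alters the coefficients of long puncture-cycles adjacent to $q$, so a naive substitution on $\beta,\gamma,\delta,\varepsilon$ will generate new terms which themselves must be cancelled at higher depth. The argument therefore must exploit the specific shape of the chosen $\tau$ (a nonseparating loop in the positive-genus case, extra punctures in the spherical case) to ensure that this iterative elimination converges; Lemma \ref{lemma:well-defined-limit-automorphism} is exactly the tool that packages the iterated substitutions into a single unitriangular automorphism implementing \eqref{eq:pop-is-re-for-tau}.
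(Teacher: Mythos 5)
Your overall architecture matches the paper's: split into positive-genus closed, spherical, and non-empty-boundary cases; in the two closed cases exhibit one explicit $\tau$ and build the right-equivalence as a limit of unitriangular automorphisms via Lemma \ref{lemma:well-defined-limit-automorphism}; reduce the boundary case by capping and restriction (Lemma \ref{lemma:res-re=re-res}). But the proposal stops exactly where the real work begins. You correctly observe that a naive substitution on $\beta,\gamma,\delta,\varepsilon$ produces new error terms that must themselves be cancelled, and you assert that ``the specific shape of the chosen $\tau$'' makes the iteration converge --- but you never say what that shape is or why convergence holds. The paper's mechanism is an explicit invariant: after an initial change of variables (substitutions on $\beta,\delta$, on $\alpha$, and on $\eta_3$; Lemma \ref{lemma:S-is-W+trash}), the error $L_1$ with $\psi(\stau)\sim_{\operatorname{cyc}}\wtau+L_1$ has the property \eqref{eq:forced-factors} that \emph{every} cycle appearing in it contains one of two fixed paths ($\eta_2\lambda_1\lambda_2\nu_3$ or $\eta_2\eta_3\lambda_2\nu_3$) as a factor. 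The iterative step (Lemma \ref{lemma:making-tails-longer}) then substitutes along an arrow ($\lambda_3$ or $\eta_1$) that does \emph{not} occur in either of those paths; this is what guarantees that the invariant is preserved, that the shortest cycle is killed, and that all new terms are strictly longer, so that the depths tend to infinity. Without identifying such an invariant (and a triangulation realizing it), ``absorb all discrepancies'' is not a proof. Moreover, in the sphere case the iteration is genuinely two-step: cycles of minimal length containing $\eta_2\lambda_1\nu\rho$ must first be traded for cycles of the \emph{same} length containing $\eta_2\eta_3\nu\rho$ (via a substitution on $\rho$), and only the latter can then be pushed to higher degree; a uniform one-step treatment would miss this. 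Note also that the iterative substitutions act on arrows of the triangles adjacent to the digon, not on the arrows $\beta,\gamma,\delta,\varepsilon$ of the self-folded configuration itself, as you propose.

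Two smaller points. First, your capping construction glues a \emph{once-punctured} disk to each boundary component; this can produce a closed sphere with at most $5$ punctures (for instance, starting from a once-punctured triangle, or from an unpunctured annulus with one marked point on each boundary component), so the already-proved empty-boundary case would not apply, contrary to your claim that \eqref{eq:not-sphere-with-few-punctures} is automatically preserved. The paper glues a triangulated $5$-punctured $m_b$-gon along each boundary component precisely to guarantee that the resulting closed surface has more than $5$ punctures. Second, your explicit $\tau$ is a fixed model, whereas the Proposition demands $\tau\supseteq\{i,j\}$ for the \emph{given} arcs $i$ and $j$; you need to invoke, as the paper does at the start of Subsection \ref{subsec:proof-Pop-Thm-empty-boundary-pos-genus}, the homogeneity of punctures on a closed surface (any two configurations $(i,j)$ of the stated type are related by an orientation-preserving diffeomorphism), so that one model suffices.
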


\subsection{Proof for surfaces with empty boundary}\label{subsec:proof-Pop-Thm-empty-boundary} In this subsection we prove Proposition \ref{prop:existence-of-popping-triangulation} for all surfaces with empty boundary that satisfying the assumption of being different from a sphere with less than 7 punctures. The sphere with 6 punctures is dealt with in the fourth arXiv version of this paper.

Since any permutation of the puncture set $\punct=\marked$ can be extended to an orientation-preserving auto-diffeomorphism of $\Sigma$, in order to prove Proposition \ref{prop:existence-of-popping-triangulation} and this way finish the proof of the Popping Theorem for empty-boundary surfaces, it suffices to show the existence of an ideal triangulation $\tau$ of $\surf$ having a self-folded triangle whose pop in $\tau$ induces right-equivalence.

\begin{lemma}\label{lemma:existence-certain-triang} Let $\surf$ be a surface with empty boundary. If $|\marked|\geq 2$ and $\surf$ is not a sphere with less than 7 punctures, then there exists an ideal triangulation $\tau$ of $\surf$ and a set of arcs $\{i_1,i_2,i_3,i_4,i_5,i_6,i_7\}\subseteq\tau$ with the following properties:
\begin{enumerate}
\item $i_1$, $i_2$, $i_3$, $i_4$, $i_5$ and $i_6$ are six different arcs in $\tau$;
\item $i_2$, $i_3$, $i_4$, $i_5$, $i_6$ and $i_7$ are six different arcs in $\tau$;
\item the arcs $i_1$ and $i_7$ are loops (with basepoints that may or may not coincide; the arcs $i_1$ and $i_7$ themselves may or may not coincide);
\item the arcs $i=i_3$ and $j=i_2$ form a self-folded triangle of $\tau$, with $i$ as folded side and $j$ as enclosing loop;
\item the full subquiver of $\qtau$ determined by $\{i_1,i_2,i_3,i_4,i_5,i_6,i_7\}$ is
\begin{center}
$Q \ = \ $\begin{tabular}{c}
$ \xymatrix{
  & i_2 \ar[dl]_{\beta} & & i_5 \ar@<0.5ex>[dd]_{\eta_3 \ } \ar@<-0.5ex>[dd]^{\ \lambda_1} & \\
  i_1  \ar[rr]^{\alpha} & & i_4 \ar[ul]_{\gamma} \ar[dl]^{\varepsilon} \ar[ur]^{\eta_1}  & &  i_7 \ar[ul]_{\lambda_2}\\
 & i_3 \ar[ul]^{\delta} & & i_6 \ar[ul]^{\eta_2} \ar[ur]_{\lambda_3} &
}$
\end{tabular}
\end{center}
\item under the notation just established for the arrows of the full subquiver $Q$ of $\qtau$, the potentials $\stau$ and $\wtau$ are given by the formulas
\begin{eqnarray}\nonumber
\stau &=& -x_{p_2}^{-1}\alpha\delta\varepsilon+\eta_1\eta_2\eta_3+\lambda_1\lambda_2\lambda_3+\alpha\beta\gamma\\
\nonumber
&&
+Y(x_{p_1}\delta\varepsilon\eta_2\lambda_1\eta_1\alpha\Lambda+x_{p_3}\lambda_3\eta_3\lambda_2\Omega)\\
\nonumber
&&
+
Z(x_{p_1} \delta\varepsilon\eta_2\lambda_1\eta_1\alpha\Lambda\lambda_3\eta_3\lambda_2\Omega)+S'(\tau),\\
\nonumber
\wtau &=& \alpha\delta\varepsilon+\eta_1\eta_2\eta_3+\lambda_1\lambda_2\lambda_3+x_{p_2}^{-1}\alpha\beta\gamma\\
\nonumber
&&
+Y(x_{p_1}\beta\gamma\eta_2\lambda_1\eta_1\alpha\Lambda+x_{p_3}\lambda_3\eta_3\lambda_2\Omega)\\
\nonumber
&&
+
Z(x_{p_1} \beta\gamma\eta_2\lambda_1\eta_1\alpha\Lambda\lambda_3\eta_3\lambda_2\Omega)+S'(\tau),
\end{eqnarray}
where
\begin{enumerate}
\item $p_1$ is the puncture the loop $i_1$ is based at, $p_2$ is the puncture inside the self-folded triangle formed by $i=i_2$ and $j=i_3$, and $p_3$ is the puncture the loop $i_7$ is based at;
\item if $p_1\neq p_3$, then $\Lambda$ and $\Omega$ are positive-length cycles on $\qtau$ respectively starting at $i_1$ and $i_7$;
\item if $p_1\neq p_3$, then $Y=1\in K$ and $Z=0\in K$;
\item if $p_1=p_3$, then $\Lambda$ and $\Omega$ are paths on $\qtau$, possibly of length 0, and respectively going from $i_7$ to $i_1$ and from $i_1$ to $i_7$;
\item if $p_1= p_3$, then $Y=0\in K$ and $Z=1\in K$;
\item regardless of whether $p_1\neq p_3$ or $p_1=p_3$, none of the paths $\Lambda$ and $\Omega$ involves any of the arrows $\alpha$, $\beta$, $\gamma$, $\delta$, $\varepsilon$, $\eta_1$, $\eta_2$, $\eta_3$, $\lambda_1$, $\lambda_2$, $\lambda_3$.
\item $S'(\tau)\in\RA{\qtau}$ is a potential not involving any of the arrows $\alpha$, $\beta$, $\gamma$, $\delta$, $\varepsilon$, $\eta_1$, $\eta_2$, $\eta_3$, $\lambda_1$, $\lambda_2$, $\lambda_3$.
\end{enumerate}
\end{enumerate}
\end{lemma}

\begin{proof} Consider the triangulated once-punctured cylinder $(\Sigma_0,\marked_0)$ depicted in Figure \ref{Fig:pop_basic_cylinder}.
        \begin{figure}[!h]
                \caption{When the empty-boundary surface $\surf$ is not a sphere with less than 7 punctures, the triangulation of the once-punctured cylinder depicted in this figure can be completed to a triangulation $\tau$ of $\surf$ satisfying the conclusion of Lemma \ref{lemma:existence-certain-triang}.}\label{Fig:pop_basic_cylinder}
                \centering
                \includegraphics[scale=.75]{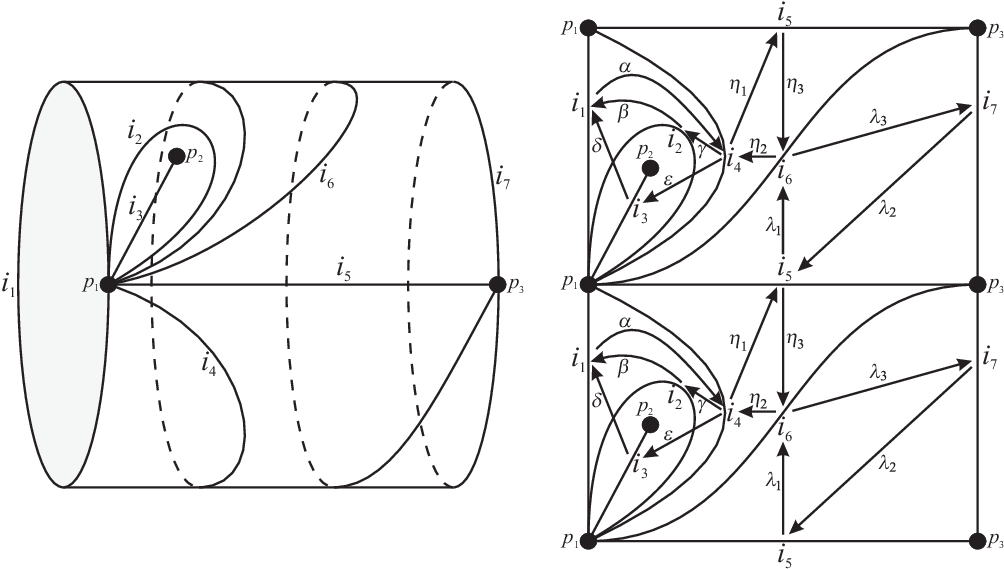}
        \end{figure}
To obtain from it a triangulation of an $n$-punctured sphere with $n\geq 7$, let $m_1$ and $m_2$ be positive integers such that $m_1+m_2=n-3$, $m_1\geq 2$ and $m_2\geq 2$, take a triangulated $m_1$-punctured monogon $(\Sigma_1,\marked_1)$ and a triangulated $m_2$-punctured monogon $(\Sigma_2,\marked_2)$, and glue $(\Sigma_1,\marked_1)$ to $(\Sigma_0,\marked_0)$ and $(\Sigma_2,\marked_2)$ to $(\Sigma_0,\marked_0)$ along boundary components, making sure that the marked point of $(\Sigma_1,\marked_1)$ lying on the boundary of $\Sigma_1$ is glued to $p_1$ and the marked point of $(\Sigma_2,\marked_2)$ lying on the boundary of $\Sigma_2$ is glued to $p_2$. The result is a triangulated $n$-punctured sphere. This triangulation $\tau$ and the arcs $i_1$, $i_2$, $i_3$, $i_4$, $i_5$, $i_6$ and $i_7$ signaled in Figure \ref{Fig:pop_basic_cylinder} satisfy the conclusion of Lemma \ref{lemma:existence-certain-triang}.

To obtain from $(\Sigma_0,\marked_0)$ a triangulation of a positive-genus empty-boundary surface with $n\geq 2$ punctures we can do the following:
\begin{itemize}
\item if $n=2$, glue $i_1$ and $i_7$ (along suitable orientations of these curves --to ensure we end up with an oriented surface) and obtain a triangulated twice-punctured torus;
\item if $n>2$, glue $p_1$ and $p_3$, without gluing any other point in $i_1$ to any point in $i_7$, and to the result of the gluing of $p_1$ and $p_3$ glue a triangulated $(n-2)$-punctured digon along $i_1$ and $i_7$ (my means of suitable orientations of these curves --to guarantee that we end up with an oriented surface), making sure that both marked points lying on the boundary of the digon are glued to $p_1=p_3$. This way we obtain a triangulated $n$-punctured torus with $n>2$;
\item if $n=2$, glue $p_1$ and $p_3$, without gluing any other point in $i_1$ to any point in $i_7$, and to the result of the gluing of $p_1$ and $p_3$ glue an unpunctured digon along $i_1$ and $i_7$ (by means of suitable orientations of these curves --to guarantee that we end up with an oriented surface), making sure that both marked points lying on the boundary of the digon are glued to $p_1=p_3$. Then cut off an open disc inside the already glued digon, thus obtaining a surface $(\Sigma_0',\{p_1,p_2\})$ which is a torus with one boundary component, with two punctures, but with no marked points lying on the boundary component. Let $\Sigma_1$ be a positive-genus surface with exactly one boundary component and without any marked points. Glue $\Sigma_1$ to $\Sigma_0'$ along boundary components, and complete $\{i_1,i_2,i_3,i_4,i_5,i_6,i_7\}$, which is a set of pairwise compatible arcs on $(\Sigma_0'\#\Sigma_1,\{p_1,p_2\})$, to a triangulation of $(\Sigma_0'\#\Sigma_1,\{p_1,p_2\})$. This way we obtain a triangulated $n$-punctured surface of genus $g\geq 2$, with empty boundary and with $n=2$.
\item if $n>2$, take a triangulated surface $(\Sigma_1,\marked_1)$ of positive genus, with exactly two boundary components, exactly one marked point on each of these two components, and exactly $n-3$ punctures, and glue it to $(\Sigma_0,\marked_0)$ along suitable orientations of boundary components (the orientations have to be chosen in such a way as to guarantee that we end up with an oriented surface), making sure that the two marked points on the boundary of $\Sigma_1$ are respectively glued to the marked points $p_1$ and $p_3$. This way we obtain a triangulated $n$-punctured surface of genus $g\geq 2$, with empty boundary and with $n> 2$.
\end{itemize}
In any case, the triangulation $\tau$ obtained after the corresponding gluing process, and the arcs $i_1$, $i_2$, $i_3$, $i_4$, $i_5$, $i_6$ and $i_7$ contained in the original cylinder $(\Sigma_0,\marked_0)$, are easily seen to satisfy the desired conclusion of Lemma \ref{lemma:existence-certain-triang}.
\end{proof}

\begin{prop}\label{prop:specific-pop-arbitrary-genus} Let $\surf$ be a surface with empty boundary. Suppose that $|\marked|\geq 2$ and that $\surf$ is different from a sphere with less than 7 punctures. Let $\tau$ be a triangulation of $\surf$ as in the conclusion of Lemma \ref{lemma:existence-certain-triang}. With the notation used in the alluded conclusion, the QPs $\qstau$ and $\qwtau$ are right equivalent.
\end{prop}

The proof of Proposition \ref{prop:specific-pop-arbitrary-genus} will follow from the next two lemmas.

\begin{lemma}\label{lemma:S-is-W+trash-arbitrary-genus} For the ideal triangulation $\tau$ above, the QP $\qstau$ is right-equivalent to a QP of the form $(\qtau,\wtau+L_1)$, where $L_1\in \RA{\qtau}$
is a potential satisfying the following condition:
\begin{eqnarray}
\label{eq:forced-factors-arbitrary-genus}
&&\text{it can be written as $\eta_2\lambda_1\lambda_2\Theta_1+\eta_2\eta_3\lambda_2\Theta_2$ for some $\Theta_1,\Theta_2\in\maxid^4\subseteq\RA{\qtau}$.}
\end{eqnarray}
\end{lemma}

\begin{proof} Write
$\Pi=Y\lambda_1\eta_1\alpha\Lambda\Lambda\delta\varepsilon+Z\lambda_1\eta_1\alpha\Lambda\lambda_3\eta_3\lambda_2\Omega\Lambda\lambda_3\eta_3\lambda_2\Omega\delta\varepsilon$ and
define $R$-algebra automorphisms
$\psi_{\beta,\delta}$, $\psi_\alpha$, $\psi_{\eta_3}:
\RA{\qtau}\rightarrow \RA{\qtau}$
according to the rules
\begin{eqnarray}\nonumber
\psi_{\beta,\delta} & : &  \beta\mapsto x_{p_2}^{-1}\beta, \ \delta\mapsto-x_{p_2}\delta,\\
\nonumber
\psi_\alpha & : &  \alpha\mapsto\alpha+Yx_{p_1}x_{p_2}\eta_2\lambda_1\eta_1\alpha\Lambda+Zx_{p_1}x_{p_2}\eta_2\lambda_1\eta_1\alpha\Lambda\lambda_3\eta_3\lambda_2\Omega\\
\nonumber
\psi_{\eta_3} & : & \eta_3\mapsto\eta_3+x_{p_1}^2x_{p_2}^2\Pi\eta_2\lambda_1.
\end{eqnarray}
Direct calculation shows that
\begin{eqnarray}\nonumber
\psi_{\eta_3}\psi_\alpha\psi_{\beta,\delta}(\stau) &\sim_{\operatorname{cyc}}&
\wtau\\
\nonumber
&&
+Yx_{p_1}^2x_{p_2}^2x_{p_3}\lambda_3\Pi\eta_2\lambda_1\lambda_2\Omega\\
\nonumber
&&
+Z\left(x_{p_1}^3x_{p_2}^2\eta_2\lambda_1\lambda_2\Omega\beta\gamma\eta_2\lambda_1\eta_1\alpha\Lambda\lambda_3\Pi\right.\\
\nonumber
&&
-x_{p_1}^4x_{p_2}^4\eta_2\lambda_1\lambda_2\Omega\Lambda\lambda_3\eta_3\lambda_2\Omega\delta\varepsilon\eta_2\lambda_1\eta_1\eta_2\lambda_1\eta_1\alpha\Lambda\lambda_3\Pi\\
\nonumber
&&
-x_{p_1}^4x_{p_2}^4\eta_2\lambda_1\lambda_2\Omega\delta\varepsilon\eta_2\lambda_1\eta_1\eta_2\lambda_1\eta_1\alpha\Lambda\lambda_3\eta_3\lambda_2\Omega\Lambda\lambda_3\Pi\\
\nonumber
&&
\left.-x_{p_1}^6x_{p_2}^6\eta_2\lambda_1\lambda_2\Omega\delta\varepsilon\eta_2\lambda_1\eta_1\eta_2\lambda_1\eta_1\alpha\Lambda\lambda_3\Pi\eta_2\lambda_1\lambda_2\Omega\Lambda\lambda_3\Pi\right).
\end{eqnarray}
This proves the Lemma.
\end{proof}

For the next lemma, we introduce a couple of pieces of notation and terminology. Given a non-zero element $u$ of a complete path algebra $\RA{ Q}$, we denote by $\short(u)$ the largest integer $n$ with the property that $u\in\maxid^{n}$ but $u\notin\maxid^{n+1}$. Also, $\length(c)$ will denote the length of any given path $c$ on $Q$. Finally, whenever we say that a cycle $\xi$ \emph{appears} (resp. \emph{does not appear}) in a potential $L$, we mean that in the expansion of $L$ as a non-redundant linear combination of cycles, $\xi$ appears with non-zero coefficient (resp. with coefficient zero).

\begin{lemma}\label{lemma:making-tails-longer-arbitrary-genus} Let $\tau$ be the ideal triangulation from the conclusion of Lemma \ref{lemma:existence-certain-triang}. Suppose $L\in \RA{\qtau}$ is a non-zero potential satisfying
\eqref{eq:forced-factors-arbitrary-genus}.
Then there exist a potential $L'\in \RA{\qtau}$ and a right-equivalence
$\varphi:(\qtau,\wtau+L)\rightarrow(\qtau,\wtau+L')$, such that:
\begin{itemize}
\item $L'$ satisfies
\eqref{eq:forced-factors-arbitrary-genus};
\item $\short(L')>\short(L)$;
\item $\varphi$ is a unitriangular automorphism of $\RA{\qtau}$, and $\depth(\varphi)=\short(L)-3$.
\end{itemize}
\end{lemma}

\begin{proof} Since $L$ satisfies \eqref{eq:forced-factors-arbitrary-genus}, there exist elements $\Theta_1,\Theta_2\in\maxid^4\subseteq\RA{\qtau}$ such that $L=\eta_2\lambda_1\lambda_2\Theta_1+\eta_2\eta_3\lambda_2\Theta_2$.
Define an $R$-algebra automorphism $\varphi$ of $\RA{\qtau}$ according to the rule
$$
\varphi:\lambda_3\mapsto\lambda_3-\Theta_1\eta_2, \ \ \ \eta_1\mapsto\eta_1-\lambda_2\Theta_2.
$$
It is clear that $\varphi$ is unitriangular and its depth is equal to $\short(L)-3$.

Let us write
\begin{eqnarray}\nonumber
L_1' &=& \varphi\left(Y(x_{p_1}\beta\gamma\eta_2\lambda_1\eta_1\alpha\Lambda+x_{p_3}\lambda_3\eta_3\lambda_2\Omega)
+Z(x_{p_1}\beta\gamma\eta_2\lambda_1\eta_1\alpha\Lambda\lambda_3\eta_3\lambda_2\Omega)\right)\\
\nonumber
&&
-\left(Y(x_{p_1}\beta\gamma\eta_2\lambda_1\eta_1\alpha\Lambda+x_{p_3}\lambda_3\eta_3\lambda_2\Omega)+
Z(x_{p_1}\beta\gamma\eta_2\lambda_1\eta_1\alpha\Lambda\lambda_3\eta_3\lambda_2\Omega)\right),\\
\nonumber
L_2' &=& \eta_2\lambda_1\lambda_2(\varphi(\Theta_1)-\Theta_1)
+\eta_2\eta_3\lambda_2(\varphi(\Theta_2)-\Theta_2).
\end{eqnarray}
While it is obvious that $L_2'$ is a potential satisfying \eqref{eq:forced-factors-arbitrary-genus}, direct computation shows that $L_1'$ satisfies \eqref{eq:forced-factors-arbitrary-genus} too. Hence, setting $L'=L_1'+L_2'$, we see that $L'$ satisfies \eqref{eq:forced-factors-arbitrary-genus}.
Furthermore, another direct computation shows that
\begin{eqnarray}
\nonumber
\varphi(\wtau+L)
&\sim_{\operatorname{cyc}}&
\wtau+L'.
\end{eqnarray}

Write $M=Y(x_{p_1}\beta\gamma\eta_2\lambda_1\eta_1\alpha\Lambda+x_{p_3}\lambda_3\eta_3\lambda_2\Omega)+
Z(x_{p_1}\beta\gamma\eta_2\lambda_1\eta_1\alpha\Lambda\lambda_3\eta_3\lambda_2\Omega)$. Then $\short(M)>3$ and
$L_1'=\varphi(M)-M$. By Lemma \ref{lemma:depth-is-short-cycle-friendly}, we have $L_1'\in\maxid^{\short(M)+\depth(\varphi)}=\maxid^{\short(M)+\short(L)-3}$, which means that $\short(L_1')\geq\short(M)+\short(L)-3>\short(L)$. Next, note that $L_2'=\varphi(N)-N$, and hence, by Lemma \ref{lemma:depth-is-short-cycle-friendly}, we have $L_2'\in\maxid^{\short(N)+\depth(\varphi)}=\maxid^{\short(L)+\short(L)-3}$, which means that $\short(L_2')\geq2\short(L)-3>\short(L)$ (the last strict inequality is due to the fact that $\short(L)>3$). Therefore, $\short(L')=\short(L_1'+L_2')>\short(L)$.

Lemma \ref{lemma:making-tails-longer-arbitrary-genus} is proved.
\end{proof}

\begin{proof}[Proof of Proposition \ref{prop:specific-pop-arbitrary-genus}] By Lemma \ref{lemma:well-defined-limit-automorphism}, it suffices to show the existence of a sequence $(L_n)_{n> 0}$ of potentials on $\qtau$ and a sequence $(\varphi_n)_{n\geq 0}$ of $R$-algebra automorphisms of $\RA{\qtau}$ such that:
\begin{eqnarray}
\label{eq:limLn=0}
&&
\text{$\lim_{n\to\infty}L_n=0$;}\\
\label{eq:lim-depth-of-phi=infty}
&&
\text{$\varphi_n$ is unitriangular for all $n>0$, and $\lim_{n\to\infty}\depth(\varphi_n)=\infty$;}\\
\label{eq:phi0-right-equiv}
&&
\text{$\varphi_0$ is a right-equivalence $(\qtau,\stau)\rightarrow(\qtau,\wtau+L_1)$;}\\
\label{eq:phin-right-equiv}
&&
\text{$\varphi_n$ is a right-equivalence $(\qtau,\wtau+L_n)\rightarrow(\qtau,\wtau+L_{n+1})$}\\
\nonumber
&& \text{for all $n>0$.}
\end{eqnarray}
Indeed, Lemma \ref{lemma:well-defined-limit-automorphism} guarantees that, if such sequences exist, then the limit $\lim_{n\to\infty}\varphi_n\varphi_{n-1}\ldots\varphi_2\varphi_1\varphi_0$ is a right-equivalence $\qstau\rightarrow\qwtau$.

By Lemma \ref{lemma:S-is-W+trash-arbitrary-genus}, there exists a right-equivalence $\varphi_0:\qstau\rightarrow(\qtau,\wtau+L_1)$, where $L_1\in \RA{\qtau}$
is a potential satisfying \eqref{eq:forced-factors-arbitrary-genus}.

Now, given a potential $L_n\in \RA{\qtau}$ satisfying \eqref{eq:forced-factors-arbitrary-genus}, Lemma \ref{lemma:making-tails-longer-arbitrary-genus} guarantees the existence of a potential $L_{n+1}\in \RA{\qtau}$ and a right-equivalence $\varphi_{n}:(\qtau,\wtau+L_n)\rightarrow (\qtau,\wtau+L_{n+1})$, such that:
\begin{itemize}
\item $L_{n+1}$ satisfies
\eqref{eq:forced-factors-arbitrary-genus};
\item $\short(L_{n+1})>\short(L_n)$;
\item $\varphi_n$ is unitriangular and $\depth(\varphi_n)=\short(L_{n})-3$.
\end{itemize}

Consider the sequences $(L_n)_{n> 0}$ and $(\varphi_n)_{n\geq 0}$ thus constructed. We obviously have $\lim_{n\to\infty}L_n=0$ and $\lim_{n\to\infty}\depth(\varphi_n)=\infty$. Therefore, the sequences $(L_n)_{n> 0}$ and $(\varphi_n)_{n\geq 0}$ satisfy all desired properties \eqref{eq:limLn=0}, \eqref{eq:lim-depth-of-phi=infty}, \eqref{eq:phi0-right-equiv} and \eqref{eq:phin-right-equiv}.

Proposition \ref{prop:specific-pop-arbitrary-genus} is proved.
\end{proof}

The proof of the Popping Theorem is now complete in the case of all empty-boundary surfaces different from a sphere with less than 7 punctures. As pointed out at the beginning of this subsection, the sphere with exactly 6 punctures is treated in the fourth arXiv version of this paper.

\subsection{Proof for surfaces with non-empty boundary}\label{subsec:proof-Pop-Thm-nonempty-boundary}

For surfaces with non-empty boundary the Popping Theorem can be proved in at least two different ways. In this subsection we give a proof via \emph{restriction}, using the fact that by now we already know that the Popping theorem holds for all empty-boundary surfaces satisfying \eqref{eq:not-sphere-with-few-punctures}. A direct proof can be given, without going through empty-boundary considerations, by ``\emph{pulling undesired terms to the boundary}".

Let us recall the definition of restriction and a couple of results from \cite{Labardini1} and \cite{Labardini2}.

\begin{defi}[{\cite[Definition 8.8]{DWZ1}}]\label{def:restriction} Let $(Q,S)$ be any QP (not necessarily arising from a surface) and $I$ be a subset of the vertex set $Q_0$. The \emph{restriction} of $(Q,S)$ to $I$ is the QP $(Q|_I,S|_I)$, where
\begin{itemize}\item $Q|_I$ is the quiver obtained from $Q$ by deleting the arrows incident to elements from $Q_0\setminus I$, the vertex set of $Q|_I$ is $Q_0$;
\item $S|_I$ is the image of $S$ under the $R$-algebra homomorphism $\psi_I:\RA{ Q}\rightarrow \RA{ Q|_I}$ defined by the rule $\psi_I(a)=a$ for every arrow $a$ of $Q$ whose head and and tail simultaneously belong to $I$, and $\psi_I(b)=0$ for every arrow $b$ incident to at least one element of $Q_0\setminus I$.
\end{itemize}
\end{defi}

\begin{lemma}[{\cite[Proof of Lemma 2.30]{Labardini2}}]\label{lemma:res-re=re-res} Suppose $\varphi:(Q,S)\rightarrow(Q',S')$ is a right-equivalence (with the QPs $(Q,S)$ and $(Q',S')$ not necessarily arising from a surface). If $I$ is a subset of $Q_0$, then the $R$-algebra homomorphism $\varphi|_{I}:\RA{ Q|_I}\rightarrow \RA{ Q'|_I}$ defined by the rule $u\mapsto\varphi(u)|_I$ is a right-equivalence between $(Q|_I,S|_I)$ and $(Q'|_I,S'|_I)$.
\end{lemma}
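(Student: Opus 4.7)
The plan is to establish the two defining conditions of right-equivalence separately: (a) that $\varphi|_I:R\langle\langle Q|_I\rangle\rangle\to R\langle\langle Q'|_I\rangle\rangle$ is an $R$-algebra isomorphism, and (b) that $\varphi|_I(S|_I)$ is cyclically equivalent to $S'|_I$. Writing $\psi_I:R\langle\langle Q\rangle\rangle\to R\langle\langle Q|_I\rangle\rangle$ and $\psi'_I:R\langle\langle Q'\rangle\rangle\to R\langle\langle Q'|_I\rangle\rangle$ for the $R$-algebra homomorphisms defining restriction, the whole argument is driven by the single observation that $\psi'_I$ kills every path in $Q'$ that visits a vertex lying outside $I$, since any such path contains an arrow incident to $Q'_0\setminus I$.

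For (a) I would invoke Proposition~2.4 of \cite{DWZ1}. For each arrow $a$ of $Q|_I$ both endpoints of $a$ lie in $I$, so $\varphi^{(1)}(a)\in e_{h(a)}A'e_{t(a)}\subseteq A'|_I$ is already in the arrow span of $Q'|_I$ and is fixed by $\psi'_I$. Hence the degree-one component of $\varphi|_I$ is the restriction of $\varphi^{(1)}$ to the $R$-subbimodule $A|_I=\bigoplus_{i,j\in I}e_jAe_i$ of $A$. Because $\varphi^{(1)}:A\to A'$ is an $R$-bimodule isomorphism preserving the decomposition of the arrow span by source and target, its restriction to $A|_I$ is an $R$-bimodule isomorphism onto $A'|_I$. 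Proposition 2.4 of \cite{DWZ1} then yields that $\varphi|_I$ is an $R$-algebra isomorphism.

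For (b), decompose $S=\iota(S|_I)+S''$, where $\iota:R\langle\langle Q|_I\rangle\rangle\hookrightarrow R\langle\langle Q\rangle\rangle$ is the natural inclusion and $S''$ collects the cycles of $S$ that visit at least one vertex outside $I$. By construction $\varphi|_I(S|_I)=\psi'_I(\varphi(\iota(S|_I)))$. The crucial step is the identity $\psi'_I(\varphi(S''))=0$: if $c=a_1\cdots a_d$ is a cycle of $Q$ visiting a vertex $v\notin I$, say $v=h(a_k)$, then $\varphi(a_k)\in e_vR\langle\langle Q'\rangle\rangle e_{t(a_k)}\cap\mathfrak{m}(Q')$, so every path appearing in $\varphi(a_k)$ ends with an arrow whose head is $v$, and such an arrow is sent to $0$ by $\psi'_I$; hence $\psi'_I(\varphi(a_k))=0$ and, by multiplicativity of $\psi'_I$, $\psi'_I(\varphi(c))=0$. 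Combining this with the elementary fact that $\psi'_I$ sends the cyclic-equivalence subspace $V\subseteq R\langle\langle Q'\rangle\rangle$ into its analogue $V'\subseteq R\langle\langle Q'|_I\rangle\rangle$ (each generator $\alpha_1\cdots\alpha_d-\alpha_2\cdots\alpha_d\alpha_1$ is either preserved verbatim, when the cycle lies in $Q'|_I$, or killed), and applying $\psi'_I$ to $\varphi(S)-S'\in V$, we obtain
\[
\varphi|_I(S|_I)=\psi'_I(\varphi(\iota(S|_I)))=\psi'_I(\varphi(S))\sim_{\operatorname{cyc}}\psi'_I(S')=S'|_I,
\]
which finishes the proof. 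The only mildly delicate point is the vanishing $\psi'_I(\varphi(c))=0$ for cycles visiting a vertex outside $I$; everything else is formal bookkeeping of the source/target decomposition preserved by $\varphi$.
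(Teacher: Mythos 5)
Your proof is correct and complete. The paper itself gives no argument for this lemma (it only cites the proof of Lemma 2.30 in \cite{Labardini2}), but what you write is the standard and expected argument: the degree-one component of $\varphi|_I$ is the restriction of the $R$-bimodule isomorphism $\varphi^{(1)}$ to the summand $\bigoplus_{i,j\in I}e_jAe_i$, which gives invertibility via \cite[Proposition 2.4]{DWZ1}, and the identity $\psi'_I\circ\varphi=\varphi|_I\circ\psi_I$ on cycles (your key vanishing $\psi'_I(\varphi(c))=0$ for cycles meeting $Q_0\setminus I$, which holds because $\varphi(a_k)=e_{h(a_k)}\varphi(a_k)e_{t(a_k)}$ forces every path in $\varphi(a_k)$ to carry an arrow incident to the outside vertex) combined with $\psi'_I(V)\subseteq V'$ yields the cyclic equivalence. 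No gaps.
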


The following two lemmas will be proved simultaneously in the proof of Proposition \ref{prop:pop-is-re-non-empty-boundary}.

\begin{lemma}[{\cite[Lemma 29]{Labardini1}}]\label{lemma:all-are-restrictions} For every QP of the form $\qstau$ with $\tau$ an ideal triangulation there exists an ideal triangulation $\widetilde{\tau}$ of a surface $(\widetilde{\Sigma},\widetilde{\marked})$ with empty boundary, with the following properties:
\begin{itemize}\item $\widetilde{\tau}$ contains all the arcs of $\tau$;
\item $\qstau$ can be obtained from the restriction of $(Q(\widetilde{\tau}),S(\widetilde{\tau},\widetilde{\mathbf{x}}))$ to $\tau$ by deleting the elements of $\widetilde{\tau}\setminus\tau$, where $\widetilde{\mathbf{x}}=(x_q)_{q\in\widetilde{\marked}}$ is any extension of the choice $\mathbf{x}=(x_p)_{p\in\punct}$ to $\widetilde{\marked}$.
\end{itemize}
\end{lemma}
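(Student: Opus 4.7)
The plan is to cap off each boundary component of $\Sigma$ with a disk carrying one new puncture, and to extend $\tau$ by triangulating each cap. Concretely, for each boundary component $B$ of $\Sigma$ with marked points $m_1,\dots,m_r$ listed counter-clockwise along $B$, I would glue a closed disk $D_B$ to $\Sigma$ along $B$ and insert a single new puncture $p_B$ in the interior of $D_B$. Let $\widetilde{\Sigma}$ be the resulting closed oriented surface and set $\widetilde{\marked}=\marked\sqcup\{p_B\}_B$; note that the former boundary marked points of $\marked$ are now punctures of $\widetilde{\Sigma}$. I would then define $\widetilde{\tau}$ to consist of the arcs of $\tau$ together with, for each $B$ as above, the $r$ fan arcs $\beta_i^B$ from $p_B$ to $m_i$ and the $r$ arcs $\alpha_i^B$ in $\widetilde{\Sigma}$ isotopic to the former boundary segment from $m_i$ to $m_{i+1}$. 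It is then straightforward to check, by drawing a picture of a neighborhood of $B$, that $\widetilde{\tau}$ is an ideal triangulation of $(\widetilde{\Sigma},\widetilde{\marked})$ containing $\tau$, whose triangles are exactly: the interior triangles of $\tau$ (unaltered); the non-interior triangles of $\tau$ promoted to interior non-self-folded triangles of $\widetilde{\tau}$ by replacing the boundary segment with the corresponding $\alpha_i^B$; and the new cap triangles $(\beta_i^B,\beta_{i+1}^B,\alpha_i^B)$. Its punctures split accordingly as (a) the old punctures of $\Sigma$ (arc neighborhoods in $\widetilde{\tau}$ coinciding with those in $\tau$), (b) the former boundary marked points of $\marked$ (now adjacent to two $\alpha$-arcs and one $\beta$-arc in addition to their $\tau$-arcs), and (c) the new punctures $p_B$ (adjacent only to $\beta^B$-arcs).

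Having this in hand, I would compute the restriction of $(Q(\widetilde{\tau}),S(\widetilde{\tau},\widetilde{\mathbf{x}}))$ to the vertex subset $\tau\subseteq\widetilde{\tau}$ in the sense of Definition~\ref{def:restriction}, and then delete the vertices $\widetilde{\tau}\setminus\tau$ from the resulting quiver. Any arrow of $Q(\widetilde{\tau})$ between two arcs of $\tau$ is contributed by a triangle of $\widetilde{\tau}$ having both of them as sides, and by the classification above that triangle must be a triangle of $\tau$ (interior or not), which contributes the same arrow to $Q(\tau)$; around the type-(a) punctures nothing changes either, so the restricted quiver is precisely $Q(\tau)$. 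For the potential, each summand of $S(\widetilde{\tau},\widetilde{\mathbf{x}})$ either involves no $\alpha$- nor $\beta$-arc and is preserved by restriction, or involves at least one such arc and is killed by it. A term-by-term check shows that the preserved summands are precisely the triangle-cycles of the interior non-self-folded triangles of $\tau$, the cycles around the type-(a) punctures, and the correction terms $\widehat{S}^p$ and $\widehat{U}^\triangle$ associated to self-folded triangles whose surrounding configuration lies entirely in the interior of $\Sigma$; the killed summands are the triangle-cycles from non-interior-to-interior triangles and from cap triangles, the cycles around type-(b) and type-(c) punctures, and the ``newly nonzero'' correction terms for self-folded triangles whose surrounding configuration reaches $\partial\Sigma$. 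Summing, the restricted potential equals $S(\tau,\mathbf{x})$.

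The main obstacle is the bookkeeping of the previous paragraph, in particular verifying that the ``newly nonzero'' correction terms all factor through $\alpha$- or $\beta$-arrows (so that they die under restriction and match the zero contribution in $S(\tau,\mathbf{x})$), and that no unexpected arrow between two $\tau$-arcs sneaks into $Q(\widetilde{\tau})$ from the cap triangulations or from the cyclic ordering around type-(b) punctures. A secondary subtlety is that $\qstau$ is the \emph{reduced} part of $\unredqstau$: the $2$-cycles that must be reduced out live at punctures of $\Sigma$ incident to exactly two arcs of $\tau$, a purely local configuration unaffected by the passage from $\tau$ to $\widetilde{\tau}$, so reduction and restriction commute in this setting and the identification with $\qstau$ goes through.
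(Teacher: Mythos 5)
Your construction is the same in spirit as the paper's: the paper also proves this lemma by capping off each boundary component $b$ of $\Sigma$ with a triangulated punctured polygon glued along $b$, and then observing that every arrow and every potential term created by the cap involves an arc outside $\tau$ and therefore dies under restriction. The one substantive difference is quantitative, and it matters: you insert a \emph{single} puncture per cap, whereas the paper glues a \emph{5-punctured} $m_b$-gon along each boundary component with $m_b$ marked points. The reason for the five is that this lemma exists solely to feed Proposition \ref{prop:pop-is-re-non-empty-boundary}, where the Popping Theorem must be applied to $(\widetilde{\Sigma},\widetilde{\marked})$, and that theorem requires $(\widetilde{\Sigma},\widetilde{\marked})$ not to be a sphere with fewer than $6$ punctures. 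With one puncture per cap your $(\widetilde{\Sigma},\widetilde{\marked})$ can be a sphere with only $4$ or $5$ punctures (an unpunctured annulus with one marked point on each boundary component yields a $4$-punctured sphere; an unpunctured square yields a $5$-punctured sphere), which at best sits outside the paper's standing assumptions and in any case is useless for the intended application. So your argument does establish the literal statement for most surfaces, but to make the lemma serve its purpose you should pad each cap with enough punctures — exactly what the paper does.

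A second, smaller point: your claim that the $2$-cycles requiring reduction ``live at punctures of $\Sigma$'' and are unaffected by the passage to $\widetilde{\tau}$ is not quite right. If a boundary component $B$ carries exactly two marked points, your new puncture $p_B$ is incident to exactly the two fan arcs $\beta_1^B,\beta_2^B$, so $\widehat{Q}(\widetilde{\tau})$ acquires a new $2$-cycle there and $(Q(\widetilde{\tau}),S(\widetilde{\tau},\widetilde{\mathbf{x}}))$ genuinely requires a reduction step that has no counterpart for $\tau$. The conclusion survives because that reduction only modifies terms supported on arrows incident to $\beta$- and $\alpha$-arcs, all of which are annihilated by the restriction to $\tau$; but this is an extra case to check, not an instance of ``reduction and restriction trivially commute.''
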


\begin{lemma}\label{lemma:popped-pots-are-also-restrictions} For every QP of the form $\qwtau$ with $\tau$ an ideal triangulation and $\wtau$ the popped potential associated to a quadruple $(\tau,\mathbf{x},i,j)$, there exists an ideal triangulation $\widetilde{\tau}$ of a surface $(\widetilde{\Sigma},\widetilde{\marked})$ with empty boundary with the following properties:
\begin{itemize}\item $\widetilde{\tau}$ contains all the arcs of $\tau$;
\item $\qwtau$ can be obtained from the restriction of $(Q(\widetilde{\tau}),W(\widetilde{\tau},\widetilde{\mathbf{x}}))$ to $\tau$ by deleting the elements of $\widetilde{\tau}\setminus\tau$, where $\widetilde{\mathbf{x}}=(x_q)_{q\in\widetilde{\marked}}$ is any extension of the choice $\mathbf{x}=(x_p)_{p\in\punct}$ to $\widetilde{\marked}$ and $W(\widetilde{\tau},\widetilde{\mathbf{x}})$ is the popped potential associated to the quadruple $(\widetilde{\tau},\widetilde{\mathbf{x}},i,j)$.
\end{itemize}
\end{lemma}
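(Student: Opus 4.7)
The plan is to mimic the construction used in Lemma~\ref{lemma:all-are-restrictions} and then to observe that the popping operation commutes with restriction. First I would cap off each boundary component of $\Sigma$ by gluing appropriately-punctured discs, producing an empty-boundary surface $(\widetilde{\Sigma},\widetilde{\marked})$ that contains $\surf$, and extend $\tau$ to an ideal triangulation $\widetilde{\tau}\supseteq\tau$ of $(\widetilde{\Sigma},\widetilde{\marked})$ by triangulating each glued disc in any convenient way, choosing an arbitrary extension $\widetilde{\mathbf{x}}=(x_r)_{r\in\widetilde{\marked}}$ of $\mathbf{x}$ to $\widetilde{\marked}$. Since $j$ is a loop enclosing a once-punctured monogon in the interior of $\Sigma$, both $i$ and $j$ will persist in $\widetilde{\tau}$, the puncture $q$ enclosed by $j$ will be a puncture of $\widetilde{\Sigma}$, and the self-folded triangle of $\tau$ with folded side $i$ will remain a self-folded triangle of $\widetilde{\tau}$. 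Hence the popped potential $W(\widetilde{\tau},\widetilde{\mathbf{x}})=\pi_{i,j}^{\widetilde{\tau}}(S(\widetilde{\tau},\widetilde{\mathbf{y}}))$ of the quadruple $(\widetilde{\tau},\widetilde{\mathbf{x}},i,j)$ will be well-defined, where $\widetilde{\mathbf{y}}=((-1)^{\delta_{r,q}}x_r)_{r\in\widetilde{\marked}}$ extends $\mathbf{y}$.

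Next I would invoke Lemma~\ref{lemma:all-are-restrictions} to the effect that the restriction of $(Q(\widetilde{\tau}),S(\widetilde{\tau},\widetilde{\mathbf{y}}))$ to $\tau$, followed by deletion of the vertices in $\widetilde{\tau}\setminus\tau$, yields $(\qtau,S(\tau,\mathbf{y}))$. Thus the lemma would reduce to the identity $\psi_\tau\circ\pi_{i,j}^{\widetilde{\tau}}=\pi_{i,j}^{\tau}\circ\psi_\tau$, where $\psi_\tau$ denotes the restriction map from Definition~\ref{def:restriction}: applying this to $u=S(\widetilde{\tau},\widetilde{\mathbf{y}})$ and combining with Lemma~\ref{lemma:all-are-restrictions} will give
\[
W(\widetilde{\tau},\widetilde{\mathbf{x}})|_\tau=\pi_{i,j}^{\tau}\bigl(S(\widetilde{\tau},\widetilde{\mathbf{y}})|_\tau\bigr)=\pi_{i,j}^{\tau}\bigl(S(\tau,\mathbf{y})\bigr)=\wtau
\]
(modulo the subsequent deletion of vertices of $\widetilde{\tau}\setminus\tau$), as required.

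To prove the commutativity $\psi_\tau\circ\pi_{i,j}^{\widetilde{\tau}}=\pi_{i,j}^{\tau}\circ\psi_\tau$, I would check the equality arrow-by-arrow. The key geometric observation is that every arrow of $Q(\widetilde{\tau})$ incident to $i$ or $j$ has both its endpoints already in $\tau$: an arrow of the signed adjacency quiver with endpoint $j$ corresponds to an adjacency of $j$ with another arc in an ideal triangle of $\widetilde{\tau}$ having $j$ as a side, and $j$ is a side of exactly two triangles of $\widetilde{\tau}$, namely the self-folded triangle of $\tau$ containing $i$ and the triangle of $\tau$ lying on the other side of $j$; similarly the folded side $i$ is adjacent only to $j$. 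Consequently $\pi_{i,j}^{\widetilde{\tau}}$ permutes arrows whose endpoints all lie in $\tau$ and fixes arrow-by-arrow every arrow with an endpoint outside $\tau$. The commutativity then follows by distinguishing the cases ``both endpoints of the arrow lie in $\tau$'' versus ``at least one endpoint lies outside $\tau$'': in the first case both $\pi_{i,j}^{\widetilde{\tau}}$ and $\psi_\tau$ act as on the corresponding arrow of $\qtau$, and in the second case $\psi_\tau$ annihilates the arrow, while $\pi_{i,j}^{\widetilde{\tau}}$ sends it to another arrow with an endpoint outside $\tau$, which is likewise annihilated.

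The main obstacle I expect is the geometric step above, namely the claim that no triangle of $\widetilde{\tau}\setminus\tau$ has $i$ or $j$ as a side. This should hold automatically because the once-punctured monogon cut out by $j$, together with the triangle of $\tau$ on the opposite side of $j$, lies entirely in the interior of $\Sigma$ and is therefore disjoint from every capping disc; as long as the extension of $\tau$ to $\widetilde{\tau}$ is carried out in the natural way used in the proof of Lemma~\ref{lemma:all-are-restrictions}, the locality property required for the commutativity is preserved, and the proof runs in parallel with that lemma.
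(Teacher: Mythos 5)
Your proposal is essentially the paper's own argument: the paper proves this lemma simultaneously with Lemma~\ref{lemma:all-are-restrictions} inside the proof of Proposition~\ref{prop:pop-is-re-non-empty-boundary}, by the very same construction of gluing triangulated punctured polygons along the boundary components and then declaring the restriction property ``easy to check.'' Your verification that restriction commutes with the pop automorphism $\pi_{i,j}$ correctly supplies the detail the paper omits (note only that your claim that \emph{every} arrow incident to $i$ or $j$ has both endpoints in $\tau$ can fail when the triangle adjacent to the self-folded one meets $\partial\Sigma$, but your subsequent case analysis handles such arrows anyway, since $\pi_{i,j}^{\widetilde{\tau}}$ sends them to arrows that $\psi_\tau$ also annihilates).
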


\begin{prop}\label{prop:pop-is-re-non-empty-boundary} Proposition \ref{prop:existence-of-popping-triangulation} holds for surfaces with non-empty boundary.
\end{prop}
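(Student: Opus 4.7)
The plan is to reduce to the empty-boundary case, which has already been established in Subsections \ref{subsec:proof-Pop-Thm-empty-boundary-pos-genus} and \ref{subsec:proof-Pop-Thm-spheres}, by means of the restriction machinery (Definition \ref{def:restriction} and Lemma \ref{lemma:res-re=re-res}). Given a surface $\surf$ with non-empty boundary and a pair $(i,j)$ as in Proposition \ref{prop:existence-of-popping-triangulation}, I first complete $\{i,j\}$ to an arbitrary ideal triangulation $\tau$ of $\surf$ containing both arcs; the existence of such a $\tau$ is an immediate consequence of the compatibility of $i$ and $j$ and the second assertion of Proposition \ref{prop:ideal-triangs-seqs-of-flips}.

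Next, I would glue a once-punctured disk (or one with more punctures, as needed) to each boundary component of $\Sigma$ so as to produce a closed connected oriented surface $\widetilde{\Sigma}$ with puncture set $\widetilde{\marked}\supseteq\marked$, choosing the glued punctures in sufficient number so that $(\widetilde{\Sigma},\widetilde{\marked})$ is not a sphere with fewer than six punctures and hence satisfies \eqref{eq:not-sphere-with-few-punctures}. I then extend $\tau$ to an ideal triangulation $\widetilde{\tau}$ of $(\widetilde{\Sigma},\widetilde{\marked})$ containing all arcs of $\tau$, and fix any extension $\widetilde{\mathbf{x}}=(x_q)_{q\in\widetilde{\marked}}$ of the scalar choice $\mathbf{x}$. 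Crucially, the self-folded triangle cut out by $j$ and containing $i$, together with the puncture enclosed by $j$, is entirely contained in $\Sigma\subset\widetilde{\Sigma}$, so it makes sense to talk about the popped potential with respect to the same pair $(i,j)$ in the triangulation $\widetilde{\tau}$.

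Since $(\widetilde{\Sigma},\widetilde{\marked})$ has empty boundary and meets \eqref{eq:not-sphere-with-few-punctures}, the Popping Theorem, already established in this case by Propositions \ref{prop:specific-pop-positive-genus} and \ref{prop:specific-pop-spheres} together with the reduction at the beginning of Subsection \ref{subsec:Pop-Thm-statement}, provides a right-equivalence
\[
\widetilde{\varphi}\colon\bigl(Q(\widetilde{\tau}),S(\widetilde{\tau},\widetilde{\mathbf{x}})\bigr)\longrightarrow\bigl(Q(\widetilde{\tau}),W(\widetilde{\tau},\widetilde{\mathbf{x}})\bigr),
\]
where $W(\widetilde{\tau},\widetilde{\mathbf{x}})$ is the popped potential of $(\widetilde{\tau},\widetilde{\mathbf{x}},i,j)$. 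Letting $I\subseteq\widetilde{\tau}$ denote the subset consisting of the arcs in $\tau$, Lemma \ref{lemma:res-re=re-res} shows that the induced map $\widetilde{\varphi}|_I$ is a right-equivalence between the restrictions to $I$. Now Lemma \ref{lemma:all-are-restrictions} identifies the restriction of $S(\widetilde{\tau},\widetilde{\mathbf{x}})$ to $I$ (after discarding the isolated vertices corresponding to $\widetilde{\tau}\setminus\tau$) with $\stau$, and Lemma \ref{lemma:popped-pots-are-also-restrictions} identifies the analogous restriction of $W(\widetilde{\tau},\widetilde{\mathbf{x}})$ with $\wtau$. Composing with these identifications, $\widetilde{\varphi}|_I$ yields the desired right-equivalence $\qstau\to\qwtau$, which establishes \eqref{eq:pop-is-re-for-tau} for the chosen $\tau$.

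The main subtlety, and the only step that is not purely formal, is the verification of Lemma \ref{lemma:popped-pots-are-also-restrictions}, namely that popping commutes with restriction. This reduces to two observations: first, the quiver automorphism $\pi_{i,j}^{\widetilde{\tau}}$ swapping $i$ and $j$ fixes every vertex outside $I$ (since $i,j\in\tau$), so it restricts cleanly to $\pi_{i,j}^{\tau}$ on $I$; second, the sign modification $y_p=(-1)^{\delta_{p,q}}x_p$ underlying Definition \ref{def:popped-potential} affects only the puncture $q\in\punct\subseteq\widetilde{\marked}$ enclosed by $j$, whose incident cycles in $Q(\widetilde{\tau})$ are unchanged upon restriction because $q$ lies in the interior of $\Sigma$ and all arcs incident to it are in $\tau$. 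Granting this commutation, the rest of the argument is a straightforward assembly of the cited lemmas.
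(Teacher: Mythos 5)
Your proposal is correct and follows essentially the same route as the paper: glue triangulated punctured polygons/disks along the boundary components to obtain an empty-boundary surface satisfying \eqref{eq:not-sphere-with-few-punctures}, invoke the already-established empty-boundary case of the Popping Theorem there, and transport the right-equivalence back via restriction using Lemmas \ref{lemma:res-re=re-res}, \ref{lemma:all-are-restrictions} and \ref{lemma:popped-pots-are-also-restrictions}. Your closing verification that popping commutes with restriction is exactly the content the paper dismisses as ``easy to check,'' so the argument matches in substance as well as in outline.
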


\begin{proof} Let $i$, $j$ be as in the statement of Proposition \ref{prop:existence-of-popping-triangulation}, and let $\tau$ be any ideal triangulation of $\surf$ containing $i$ and $j$.
For each boundary component $b$ of $\surfnoM$, let $m_b$ be the number of marked points lying on $b$. Since each $b$ is homeomorphic to a circle, we can glue $\surfnoM$ and a triangulated $5$-punctured $m_b$-gon  along $b$. Making such gluing along every boundary component of $\surfnoM$ will result in a surface $(\widetilde{\Sigma},\widetilde{\marked})$ with empty empty boundary and more than 5 punctures, and an ideal triangulation $\widetilde{\tau}$ containing $\tau$. In particular, $i,j\in\widetilde{\tau}$. It is easy to check that the ideal triangulation $\widetilde{\tau}$ satisfies the properties stated in Lemmas \ref{lemma:all-are-restrictions} and \ref{lemma:popped-pots-are-also-restrictions}.

Since Theorem \ref{thm:popping-is-right-equiv} is already known to hold in the empty boundary case (cf. Subsection \ref{subsec:proof-Pop-Thm-empty-boundary}), the pop of $(i,j)$ in $\widetilde{\tau}$ induces right-equivalence, that is, there exists a right-equivalence between $(Q(\widetilde{\tau},S(\widetilde{\tau},\widetilde{x}))$ and $(Q(\widetilde{\tau},W(\widetilde{\tau},\widetilde{x}))$. Since $\widetilde{\tau}$ simultaneously proves Lemmas \ref{lemma:all-are-restrictions} and \ref{lemma:popped-pots-are-also-restrictions}, a straightforward combination of this fact with Lemma \ref{lemma:res-re=re-res} finishes the proof of Proposition~\ref{prop:pop-is-re-non-empty-boundary}.
\end{proof}

\begin{ex} In Figure \ref{Fig:genus_3_with_boundary} we have sketched the proof of Proposition \ref{prop:pop-is-re-non-empty-boundary}. $\hfill{\blacktriangle}$
        \begin{figure}[!h]
                \caption{Sketch of proof of the Popping Theorem for $\partial\Sigma\neq\varnothing$}
                \label{Fig:genus_3_with_boundary}
                \centering
                \includegraphics[scale=.4]{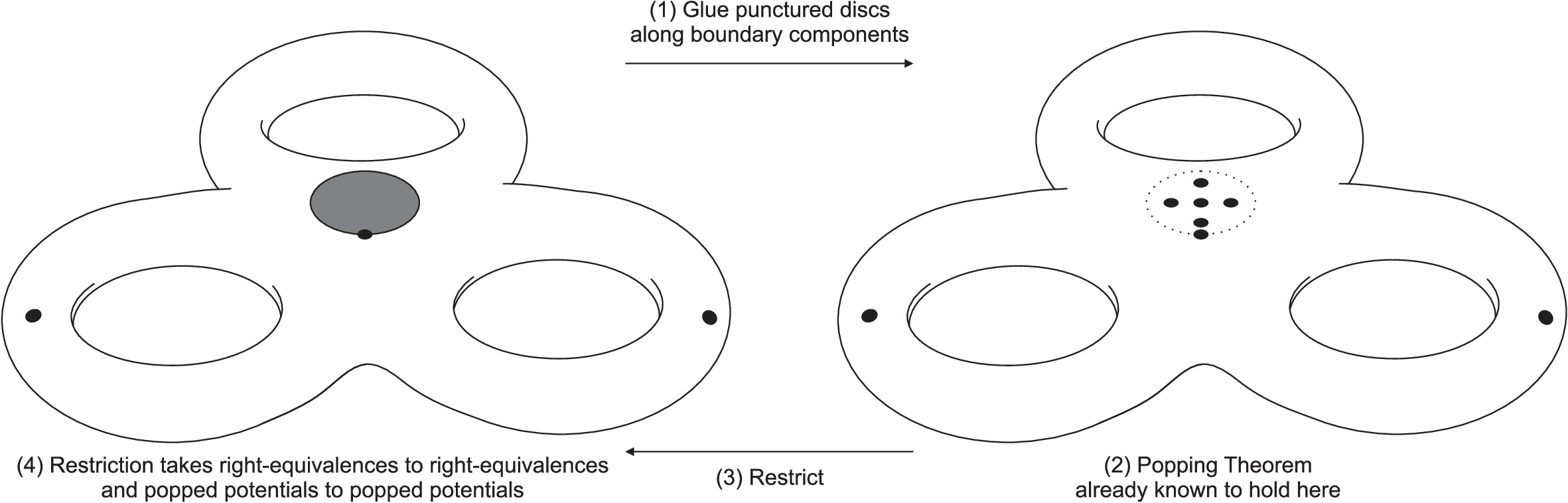}
        \end{figure}
\end{ex}

\section{Leaving the positive stratum}\label{sec:leaving-pos-stratum}

Let $\surf$ be any surface different from a 5-punctured sphere. Thus, $\surf$ is allowed to be any of the surfaces listed in \eqref{eq:no-boundary}, \eqref{eq:positive-genus} and \eqref{eq:spheres}.

Ideal triangulations are precisely the tagged triangulations that have non-negative signature, and these are precisely the tagged triangulations that have positive weak signature. Thus, in any tagged triangulation with non-negative signature we have a well-defined notion of \emph{folded side}. It is the flips of folded sides the ones that have proven to be extremely difficult to deal with, at least in regard to QP-mutations.

\begin{prop}\label{prop:flip-mut-folded-sides} Let $\tau$ be an ideal triangulation of $\surf$. Suppose that $i$ and $j$ are arcs in $\tau$ forming a self-folded triangle, with $i$ as folded side and $j$ as enclosing loop. Let $\sigma$ be the tagged triangulation obtained from $\tagfunction_{\mathbf{1}}(\tau)$ by flipping the tagged arc $\tagfunction_{\mathbf{1}}(i)$, and let $\ell$ be the unique tagged arc on $\surf$ such that $\sigma=(\tagfunction_{\mathbf{1}}(\tau)\setminus\{\tagfunction_{\mathbf{1}}(i)\})\cup\{\ell\}$. The QPs $(Q(\tagfunction_{\mathbf{1}}(\tau)),S(\tagfunction_{\mathbf{1}}(\tau),\mathbf{x}))$ and $\mu_{\ell}\qssigma$ are right-equivalent.
\end{prop}

\begin{proof} By Theorem \ref{thm:popping-is-right-equiv}, the QPs $\qstau$ and $\qwtau$ are right-equivalent. This, together with Definition \ref{def:QP-of-tagged-triangulation} and the very definition of the $K$-algebra isomorphism $\tagfunction_\epsilon:\RA{Q(\tau^\circ)}\rightarrow\RA{Q(\tau)}$ described right after Definition \ref{def:obvious-cycles}, imply that $(Q(\tagfunction_{\mathbf{1}}(\tau)),S(\tagfunction_{\mathbf{1}}(\tau),\mathbf{x}))$ and $(Q(\tagfunction_{\mathbf{1}}(\tau)),\tagfunction_{\mathbf{1}}(\wtau))$ are right-equivalent.  We shall show that $(Q(\tagfunction_{\mathbf{1}}(\tau)),\tagfunction_{\mathbf{1}}(\wtau))$ is right-equivalent to $\mu_{\ell}\qssigma$.

Let $\triangle$ be the self-folded triangle of $\tau$ containing $i$ and $j$, and let $\triangle'$ be the unique triangle of $\tau$ which is not self-folded and contains $j$. Let $k$ and $k'$ be the sides of $\triangle'$ that are different from $j$; see Figure \ref{Fig:pos_stratum_abandoned_notation}, whose notation we shall adopt for the rest of the proof.
        \begin{figure}[!h]
                \caption{}\label{Fig:pos_stratum_abandoned_notation}
                \centering
                \includegraphics[scale=.5]{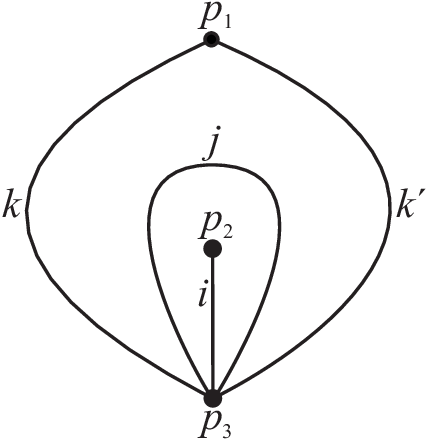}
        \end{figure}
Then we have three possibilities:
\begin{enumerate}
\item Either $k,k'\in\tau$, and none of $k$ and $k'$ is a loop enclosing a self-folded triangle of $\tau$; or
\item $k,k'\in\tau$, $k$ is a loop enclosing a self-folded triangle of $\tau$ and $k'$ is not; or
\item $k,k'\in\tau$, $k'$ is a loop enclosing a self-folded triangle of $\tau$ and $k$ is not; or
\item at least one of $k$ and $k'$ is a boundary segment of $\surf$.
\end{enumerate}
Note that $k$ and $k'$ cannot both happen to be loops enclosing self-folded triangles, for otherwise $\surf$ would be a sphere with exactly 4 punctures. We will prove that $(Q(\tagfunction_{\mathbf{1}}(\tau)),\tagfunction_{\mathbf{1}}(\wtau))$ is right-equivalent to $\mu_{\ell}\qssigma$ in the three cases that we have just listed as possibilities.

\begin{case}
Suppose that $k,k'\in\tau$, and that none of $k$ and $k'$ is a loop enclosing a self-folded triangle of $\tau$. Then we have three possibilities:
\begin{itemize}
\item[(i)] Either the marked points $q_1$ and $q_3$ are different, and the marked point $q_1$ is not a puncture incident to exactly two arcs of $\tau$; or
\item[(ii)] the marked points $q_1$ and $q_3$ are different, and the marked point $q_1$ is a puncture incident to exactly two arcs of $\tau$; or
\item[(iii)] the marked points $q_1$ and $q_3$ coincide.
\end{itemize}

If the marked points $q_1$ and $q_3$ are different, and the marked point $q_1$ is not a puncture incident to exactly two arcs of $\tau$, then the configurations that $\sigma$ and $\tagfunction_{\mathbf{1}}(\tau)$ respectively present around $l$ and $\tagfunction_{\mathbf{1}}(i)$ are the ones
shown in Figure \ref{Fig:pos_stratum_abandoned_1},
        \begin{figure}[!h]
                \caption{}\label{Fig:pos_stratum_abandoned_1}
                \centering
                \includegraphics[scale=.8]{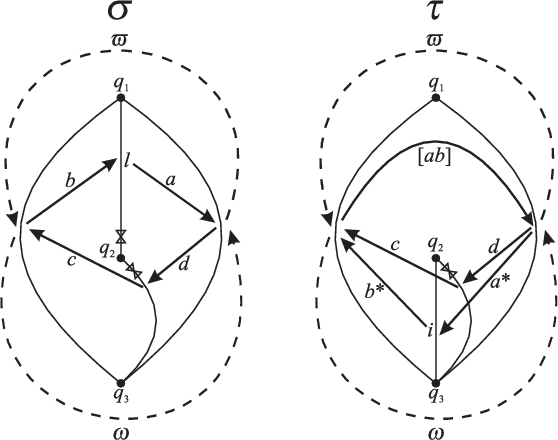}
        \end{figure}
and the potentials $\ssigma$ and $\tagfunction_{\mathbf{1}}(\wtau))$ are given by
\begin{eqnarray}\nonumber
\ssigma & = & x_{q_1}ab\varpi+x_{q_2}^{-1}abcd+x_{q_3}cd\omega+S'(\sigma) \ \ \ \text{and}\\
\nonumber
\tagfunction_{\mathbf{1}}(\wtau)) & = & a^*[ab]b^*+x_{q_1}[ab]\varpi+x_{q_2}^{-1}[ab]cd+x_{q_3}cd\omega+S'(\sigma),
\end{eqnarray}
where:
\begin{itemize}
\item[(a)] $\varpi$ is $\begin{cases}\text{a path of length greater than 1} & \text{if $q_1$ is a puncture};\\
0 & \text{if $q_1$ is not a puncture};\end{cases}$
\item[(b)] $\omega$ is $\begin{cases}\text{a path of positive length} & \text{if $q_3$ is a puncture};\\
0 & \text{if $q_3$ is not a puncture};\end{cases}$
\item[(c)] $S'(\sigma)\in\RA{\qsigma}$ is a potential not involving any of the arrows $a,b,c,d$.
\end{itemize}
Furthermore, $\mu_l(\qsigma)=\widetilde{\mu}_l(\qsigma)=Q(\tagfunction_{\mathbf{1}}(\tau))$ (with the vertex $\tagfunction_{\mathbf{1}}(i)\in Q(\tagfunction_{\mathbf{1}}(\tau))$ replacing the vertex $l\in\qsigma$) and
$$
\mu_l(\ssigma)=\widetilde{\mu}_l(\ssigma)=x_{q_1}[ab]\varpi+x_{q_2}^{-1}[ab]cd+x_{q_3}cd\omega+S'(\sigma)+a^*[ab]b^*.
$$
The QPs $\mu_l\qssigma$ and $(Q(\tagfunction_{\mathbf{1}}(\tau)),\tagfunction_{\mathbf{1}}(\wtau))$ are then obviously right-equivalent.

If the marked points $q_1$ and $q_3$ are different, and the marked point $q_1$ is a puncture incident to exactly two arcs of $\tau$, then the configurations that $\sigma$ and $\tau$ respectively present around $l$ and $i$ are the ones shown in Figure \ref{Fig:pos_stratum_abandoned_2},
        \begin{figure}[!h]
                \caption{}\label{Fig:pos_stratum_abandoned_2}
                \centering
                \includegraphics[scale=.8]{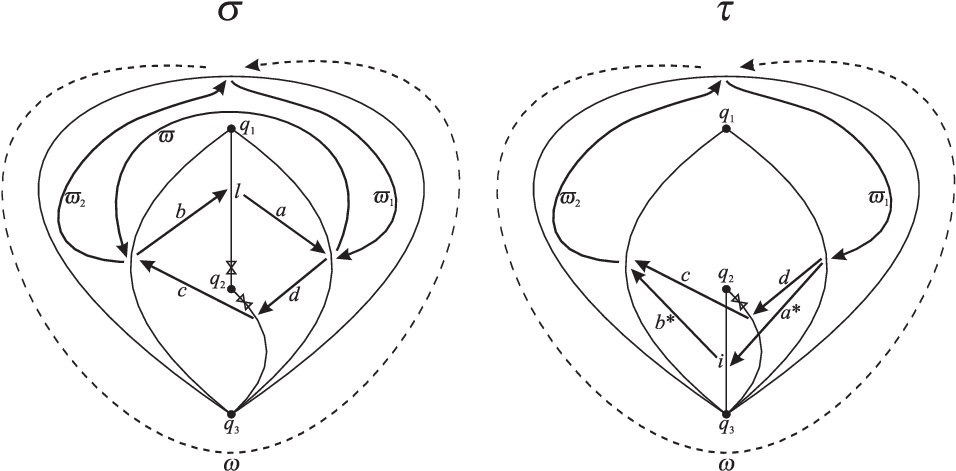}
        \end{figure}
and the potentials $\ssigma$, $\tagfunction_{\mathbf{1}}(\wtau))$ and $\mu_l(\ssigma)$ are given by
\begin{eqnarray}\nonumber
\ssigma & = & \varpi_1\varpi_2\varpi+x_{q_1}ab\varpi+x_{q_2}^{-1}abcd+x_{q_3}cd\varpi_1\omega\varpi_2+S'(\sigma),\\
\nonumber
\tagfunction_{\mathbf{1}}(\wtau)) & = &-x_{1}^{-1}\varpi_1\varpi_2b^*a^*-x_1^{-1}x_2^{-1}\varpi_1\varpi_2cd+x_3cd\varpi_1\omega\varpi_2+S'(\sigma) \ \ \
\text{and}\\
\nonumber
\mu_l(\ssigma) & = &-x_{1}^{-1}\varpi_1\varpi_2b^*a^*-x_1^{-1}x_2^{-1}\varpi_1\varpi_2cd+x_3cd\varpi_1\omega\varpi_2+S'(\sigma),
\end{eqnarray}
where:
\begin{itemize}
\item[(a)] $\varpi$, $\varpi_1$ and $\varpi_2$ are arrows of $Q(\sigma)$;
\item[(b)] $\omega$ is $\begin{cases}\text{a path of positive length} & \text{if $q_3$ is a puncture};\\
0 & \text{if $q_3$ is not a puncture};\end{cases}$
\item[(c)] $S'(\sigma)\in\RA{\qsigma}$ is a potential not involving any of the arrows $a,b,c,d,\omega,\varpi,\varpi_1,\varpi_2$.
\end{itemize}
The QPs $\mu_l\qssigma$ and $(Q(\tagfunction_{\mathbf{1}}(\tau)),\tagfunction_{\mathbf{1}}(\wtau))$ are then obviously right-equivalent.

If the marked points $q_1$ and $q_3$ coincide, then the configurations that $\sigma$ and $\tau$ respectively present around $l$ and $i$ are the ones shown in Figure \ref{Fig:pos_stratum_abandoned_3},
        \begin{figure}[!h]
                \caption{}\label{Fig:pos_stratum_abandoned_3}
                \centering
                \includegraphics[scale=.8]{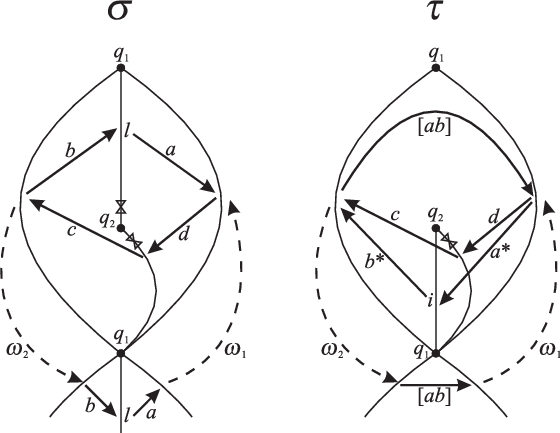}
        \end{figure}
and the potentials $\ssigma$ and $\tagfunction_{\mathbf{1}}(\wtau))$ are given by
\begin{eqnarray}\nonumber
\ssigma & = & x_{q_1}\omega_1ab\omega_2cd+x_{q_2}^{-1}abcd+S'(\sigma) \ \ \ \text{and}\\
\nonumber
\tagfunction_{\mathbf{1}}(\wtau) & = & a^*[ab]b^*+x_{q_1}\omega_1[ab]\omega_2cd+x_{q_2}^{-1}d[ab]c+S'(\sigma),
\end{eqnarray}
where:
\begin{itemize}
\item[(a)] $\omega_1$ and $\omega_2$ are $\begin{cases}\text{paths of length greater than 1} & \text{if $q_1$ is a puncture};\\
0 & \text{if $q_1$ is not a puncture};\end{cases}$
\item[(b)] $S'(\sigma)\in\RA{\qsigma}$ is a potential not involving any of the arrows $a,b,c,d$.
\end{itemize}
Furthermore, $\mu_l(\qsigma)=\widetilde{\mu}_l(\qsigma)=Q(\tagfunction_{\mathbf{1}}(\tau))$ (with the vertex $i\in Q(\tagfunction_{\mathbf{1}}(\tau))$ replacing the vertex $l\in\qsigma$) and
$$
\mu_l(\ssigma)=x_{q_1}\omega_1[ab]\omega_2cd+x_{q_2}^{-1}[ab]cd+S'(\sigma)+a^*[ab]b^*.
$$
The QPs $\mu_l\qssigma$ and $(Q(\tagfunction_{\mathbf{1}}(\tau)),\tagfunction_{\mathbf{1}}(\wtau))$ are then obviously right-equivalent.
\end{case}

\begin{case} Suppose that $k,k'\in\tau$, and that $k$ is a loop enclosing a self-folded triangle of $\tau$ and $k'$ is not. Then the configurations that $\sigma$ and $\tau$ respectively present around $l$ and $i$ are the ones shown in Figure \ref{Fig:pos_stratum_abandoned_5},
        \begin{figure}[!h]
                \caption{}\label{Fig:pos_stratum_abandoned_5}
                \centering
                \includegraphics[scale=.8]{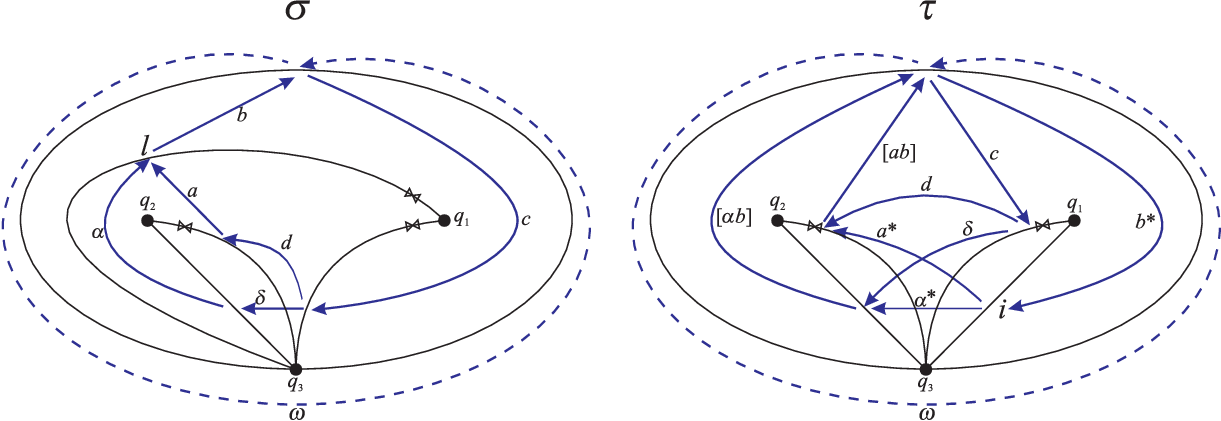}
        \end{figure}
and the potentials $\ssigma$ and $\tagfunction_{\mathbf{1}}(\wtau))$ are given by
\begin{eqnarray}\nonumber
\ssigma & = & x_{q_2}^{-1}dcba-x_{q_2}^{-1}x_{p}^{-1}\delta cb\alpha+x_{q_3}\delta c\omega b\alpha+S'(\sigma) \ \ \ \text{and}\\
\nonumber
\tagfunction_{\mathbf{1}}(\wtau)) & = & b^*[ba]a^*+x_{q_2}^{-1}c[ba]d-x_{p}b^*[b\alpha]\alpha^*-x_{q_2}^{-1}x_{p}^{-1}c[b\alpha]\delta+
x_{q_3}c\omega[b\alpha]\delta+S'(\sigma),
\end{eqnarray}
where:
\begin{itemize}
\item[(a)] $\omega$ is $\begin{cases}\text{a path of positive length} & \text{if $q_3$ is a puncture};\\
0 & \text{if $q_3$ is not a puncture};\end{cases}$
\item[(b)] $S'(\sigma)\in\RA{\qsigma}$ is a potential not involving any of the arrows $a,b,c,d,\alpha,\delta$.
\end{itemize}
Furthermore, $\mu_l(\qsigma)=\widetilde{\mu}_l(\qsigma)=Q(\tagfunction_{\mathbf{1}}(\tau))$ (with the vertex $i\in Q(\tagfunction_{\mathbf{1}}(\tau))$ replacing the vertex $l\in\qsigma$) and
$$
\widetilde{\mu}_l(\ssigma)=\mu_l(\ssigma)=x_{q_2}^{-1}dc[ba]-x_{q_2}^{-1}x_{p}^{-1}\delta c[b\alpha]
+x_{q_3}\delta c\omega[b\alpha]+S'(\sigma)+b^*[ba]a^*+b^*[b\alpha]\alpha^*.
$$
The $R$-algebra isomorphism $\varphi:\RA{\mu_l(Q(\sigma))}\rightarrow \RA{Q(\tagfunction_{\mathbf{1}}(\tau))}$ acting by $\alpha^*\mapsto-x_{p}\alpha^*$
and the identity on the rest of the arrows, is easily seen to be a right-equivalence $\varphi:\mu_l(\qsigma,\ssigma)\rightarrow(Q(\tagfunction_{\mathbf{1}}(\tau)),\tagfunction_{\mathbf{1}}(\wtau))$.
\end{case}

\begin{case} Suppose that $k,k'\in\tau$, and that $k'$ is a loop enclosing a self-folded triangle of $\tau$ and $k$ is not. Then the configurations that $\sigma$ and $\tau$ respectively present around $l$ and $i$ are the ones shown in Figure \ref{Fig:pos_stratum_abandoned_4},
        \begin{figure}[!h]
                \caption{}\label{Fig:pos_stratum_abandoned_4}
                \centering
                \includegraphics[scale=.8]{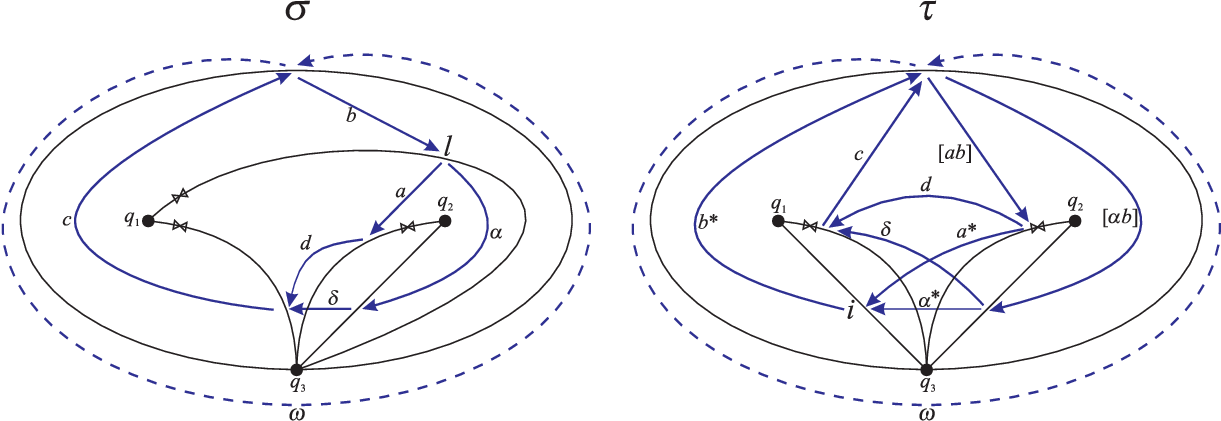}
        \end{figure}
and the potentials $\ssigma$ and $\tagfunction_{\mathbf{1}}(\wtau))$ are given by
\begin{eqnarray}\nonumber
\ssigma & = & x_{q_2}^{-1}abcd-x_{q_2}^{-1}x_{p}^{-1}\alpha bc\delta+x_{q_3}\alpha b\omega c\delta+S'(\sigma) \ \ \ \text{and}\\
\nonumber
\wtau & = & a^*[ab]b^*+x_{q_2}^{-1}d[ab]c-x_{p}\alpha^*[\alpha b]b^*-x_{q_2}^{-1}x_{p}^{-1}\delta[\alpha b]c+x_{q_3}\delta[\alpha b]\omega c+S'(\sigma),
\end{eqnarray}
where:
\begin{itemize}
\item[(a)] $\omega$ is $\begin{cases}\text{a path of positive length} & \text{if $q_3$ is a puncture};\\
0 & \text{if $q_3$ is not a puncture};\end{cases}$
\item[(b)] $S'(\sigma)\in\RA{\qsigma}$ is a potential not involving any of the arrows $a,b,c,d,\alpha,\delta$.
\end{itemize}
Furthermore, $\mu_l(\qsigma)=\widetilde{\mu}_l(\qsigma)=Q(\tagfunction_{\mathbf{1}}(\tau))$ (with the vertex $i\in Q(\tagfunction_{\mathbf{1}}(\tau))$ replacing the vertex $l\in\qsigma$) and
$$
\widetilde{\mu}_l(\ssigma)=\mu_l(\ssigma)=x_{q_2}^{-1}[ab]cd-x_{q_2}^{-1}x_{p}^{-1}[\alpha b]c\delta+x_{q_3}[\alpha b]\omega c\delta+S'(\sigma)+a^*[ab]b^*+\alpha^*[\alpha b]b^*.
$$
The $R$-algebra isomorphism $\varphi:\RA{\mu_l(Q(\sigma))}\rightarrow \RA{Q(\tagfunction_{\mathbf{1}}(\tau))}$ acting by $\alpha^*\mapsto-x_{p}\alpha^*$
and the identity on the rest of the arrows, is easily seen to be a right-equivalence $\varphi:\mu_l(\qsigma,\ssigma)\rightarrow(Q(\tagfunction_{\mathbf{1}}(\tau)),\tagfunction_{\mathbf{1}}(\wtau))$.
\end{case}

\begin{case} Suppose that at least one of $k$ and $k'$ is a boundary segment of $\surf$. Then $\ell$ is a sink or a source of the quiver $\qsigma$ and $\ssigma=S(\tagfunction_{\mathbf{1}}(\tau),\mathbf{x})=\tagfunction_{\mathbf{1}}(\wtau))$, whence it is obvious that $\mu_l(\qsigma,\ssigma)$ and $(Q(\tagfunction_{\mathbf{1}}(\tau)),\tagfunction_{\mathbf{1}}(\wtau))$ are right-equivalent.
\end{case}

We have thus proved that $\mu_l(\qsigma,\ssigma)$ and $(Q(\tagfunction_{\mathbf{1}}(\tau)),\tagfunction_{\mathbf{1}}(\wtau))$ are right-equivalent. Since $(Q(\tagfunction_{\mathbf{1}}(\tau)),\tagfunction_{\mathbf{1}}(\wtau))$ is right-equivalent to $(Q(\tagfunction_{\mathbf{1}}(\tau)),S(\tagfunction_{\mathbf{1}}(\tau),\mathbf{x}))$, Proposition \ref{prop:flip-mut-folded-sides} now follows from the fact that composition of right equivalences is right equivalence.
\end{proof}

\begin{thm}\label{thm:leaving-positive-stratum} Let $\tau$ be any ideal triangulation of $\surf$, and let $i$ be any arc belonging to $\tau$. Then $\mu_{\tagfunction_{\mathbf{1}}(i)}(Q(\tagfunction_{\mathbf{1}}(\tau)),S(\tagfunction_{\mathbf{1}}(\tau),\mathbf{x}))$ is right-equivalent to $\qssigma$, where $\sigma=f_{\tagfunction_{1}(i)}(\tagfunction_{\mathbf{1}}(\tau))$ is the tagged triangulation obtained from $\tagfunction_{\mathbf{1}}(\tau)$ by flipping the arc $\tagfunction_{\mathbf{1}}(i)$.
\end{thm}

\begin{proof} By Theorem \ref{thm:ideal-flips<->mutations} we can assume that $i$ is a folded side of $\tau$. Let $l$ be the unique tagged arc such that $\sigma=(\tagfunction_{\mathbf{1}}(\tau)\setminus\{\tagfunction_{\mathbf{1}}(i)\})\cup\{l\}$. Since QP-mutations are involutive up to right-equivalence, to prove Theorem \ref{thm:leaving-positive-stratum} it suffices to show that $\mu_l\qssigma$ and $(Q(\tagfunction_{\mathbf{1}}(\tau)),S(\tagfunction_{\mathbf{1}}(\tau),\mathbf{x}))$ are right-equivalent. But this is precisely what Proposition \ref{prop:flip-mut-folded-sides} asserts.
\end{proof}

\section{Flip $\leftrightarrow$ QP-mutation compatibility}\label{sec:flip<->mutations}

Let $\surf$ be any surface different from a 5-punctured sphere. Thus, $\surf$ is allowed to be any of the surfaces listed in \eqref{eq:no-boundary}, \eqref{eq:positive-genus} and \eqref{eq:spheres}.

\begin{thm}\label{thm:tagged-flips<->mutations} If $\tau$ and $\sigma$ are tagged triangulations of $\surf$ related by the flip of a tagged arc $\arc\in\tau$, then the QPs $\mu_\arc\qstau$ and $\qssigma$ are right-equivalent.
\end{thm}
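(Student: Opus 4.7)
The plan is to split the proof into two cases according to Proposition \ref{prop:tagfunction-and-circ}(6): either $\epsilon_\tau = \epsilon_\sigma$, or these weak signatures differ at exactly one puncture. In both cases the strategy is to pull the tagged flip back to a flip at the ideal level via the operation $?^\circ$, and then invoke one of the two flip$\leftrightarrow$QP-mutation compatibility results already established in the paper: Theorem \ref{thm:ideal-flips<->mutations} in the first case and Theorem \ref{thm:leaving-positive-stratum} in the second.

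\emph{Case 1: $\epsilon_\tau = \epsilon_\sigma =: \epsilon$.} By Proposition \ref{prop:tagfunction-and-circ}(5), the tagged flip $\sigma=f_i(\tau)$ is shadowed by the ideal flip $\sigma^\circ = f_{i^\circ}(\tau^\circ)$, and since $\sigma^\circ$ is an ideal triangulation, $i^\circ$ cannot be a folded side of $\tau^\circ$. Via the canonical bijections $\tagfunction_\epsilon:\tau^\circ\to\tau$ and $\tagfunction_\epsilon:\sigma^\circ\to\sigma$ the quivers are identified, and comparing Definition \ref{def:QP-of-tagged-triangulation} with the ideal case, the only discrepancy between $\stau$ and $S(\tau^\circ,\mathbf{x})$ is a factor $\epsilon(p)$ in front of each puncture cycle $\widehat{S}^p$; the analogous statement holds for $\sigma$ with the same signs (since $\epsilon_\tau=\epsilon_\sigma$). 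I would then apply Theorem \ref{thm:ideal-flips<->mutations} to $\tau^\circ$ and $\sigma^\circ$ and transport the resulting right-equivalence across this sign bookkeeping.

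\emph{Case 2: $\epsilon_\tau \neq \epsilon_\sigma$.} By Proposition \ref{prop:tagfunction-and-circ}(6) the two weak signatures differ at exactly one puncture $q$, and after possibly swapping $\tau$ and $\sigma$ (using involutivity of QP-mutation up to right-equivalence, cf.\ \cite[Corollary 5.4]{DWZ1}) we may assume $\epsilon_\tau(q) = 1 = -\epsilon_\sigma(q)$, so that $i^\circ$ is a folded side of $\tau^\circ$ incident to $q$. This places us precisely in the setup of Theorem \ref{thm:leaving-positive-stratum} applied to the ideal triangulation $\tau^\circ$ and the folded side $i^\circ\in\tau^\circ$, which yields a right-equivalence between $\mu_{i^\circ}(Q(\tau^\circ),S(\tau^\circ,\mathbf{x}))$ and $(Q(f_{i^\circ}(\tau^\circ)),S(f_{i^\circ}(\tau^\circ),\mathbf{x}))$. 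The latter is identified with $\qssigma$ via $\tagfunction_{\epsilon_\tau\epsilon_\sigma}$, using the commutative diagram \eqref{eq:commutative-diagram-tagged-flips}.

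The main obstacle is the bookkeeping of the sign factors $\epsilon_\tau(p)$ appearing in the definition of $\stau$: they multiply the puncture cycles $\widehat{S}^p$ but not the $\widehat{U}^\triangle$ terms, so one cannot uniformly absorb all of them by a substitution $x_p\mapsto\epsilon_\tau(p)x_p$. I expect to handle this via localized $R$-algebra automorphisms of $\completeRQ$ that rescale individual arrows incident to the punctures $p$ with $\epsilon_\tau(p) = -1$, chosen so as to produce the sign $\epsilon(p)$ in $\widehat{S}^p$ while leaving the triangle cycles $\widehat{S}^\triangle$ and the terms $\widehat{U}^\triangle$ unaffected; one then verifies that the very same rescaling applied on the $\sigma$-side intertwines the right-equivalence coming from the ideal-level theorems with the desired one at the tagged level. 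This combinatorial-algebraic verification, rather than the appeal to the earlier theorems, will be the technically delicate step.
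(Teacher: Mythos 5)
Your case decomposition and your choice of which earlier theorem to invoke in each case (Theorem \ref{thm:ideal-flips<->mutations} when $\epsilon_\tau=\epsilon_\sigma$, Theorem \ref{thm:leaving-positive-stratum} when they differ at one puncture) coincide exactly with the paper's proof. The gap is in the step you yourself flag as delicate, and it stems from a false premise: you assert that one \emph{cannot} absorb the signs $\epsilon_\tau(p)$ by the substitution $x_p\mapsto\epsilon_\tau(p)x_p$ because the $\widehat{U}^\triangle$ terms would be disturbed. In fact this substitution works on the nose. The punctures whose scalars enter a term $\widehat{U}^\triangle(\tau^\circ,\mathbf{x})$, as well as those entering a folded-side term $\widehat{S}^p(\tau^\circ,\mathbf{x})=-x_p^{-1}abc$, are precisely the punctures enclosed in self-folded triangles of $\tau^\circ$; these all have $\delta_\tau(p)=0$, hence $\epsilon_\tau(p)=1$, so setting $y_p=\epsilon_\tau(p)x_p$ leaves those terms unchanged. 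For every other puncture, $\epsilon_\tau(p)^{\pm1}=\epsilon_\tau(p)$ gives $\epsilon_\tau(p)\widehat{S}^p(\tau^\circ,\mathbf{x})=\widehat{S}^p(\tau^\circ,\mathbf{y})$. Hence $\unredstau=\tagfunction_{\epsilon_\tau}(\widehat{S}(\tau^\circ,\mathbf{y}))$ is \emph{literally} a vertex relabeling of the ideal potential with scalars $\mathbf{y}=(\epsilon_\tau(p)x_p)_{p\in\punct}$, and likewise on the $\sigma$ side (where the effective scalar $\epsilon_\sigma(p)x_p$ equals $\epsilon_\tau(p)\epsilon_\sigma(p)\cdot y_p$, matching $S(\tagfunction_{\epsilon_\tau\epsilon_\sigma}(\sigma^\circ),\mathbf{y})$). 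One then applies the ideal-level theorems with $\mathbf{y}$ in place of $\mathbf{x}$ and relabels vertices; no further automorphism is needed.

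By contrast, the repair you propose --- rescaling individual arrows incident to the punctures with $\epsilon_\tau(p)=-1$ so as to change the sign of $\widehat{S}^p$ while leaving every $\widehat{S}^\triangle$ and $\widehat{U}^\triangle$ untouched --- cannot work in general. Every arrow of $Q(\tau^\circ)$ lies on a triangle cycle, so any rescaling that alters the coefficient of a puncture cycle also alters the coefficient of some triangle cycle; on a closed surface there is no boundary to push the discrepancy to. This is exactly the obstruction that makes Proposition \ref{prop:scalars-are-irrelevant} require non-empty boundary and forces the extra global factor $\lambda$ in Proposition \ref{prop:scalars-are-irrelevant-empty-bound}. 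So as written your argument would stall at its final step; replacing it with the observation above closes the gap and recovers the paper's proof.
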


\begin{proof} We have two possibilities: the weak signatures $\epsilon_\tau$ and $\epsilon_\sigma$ either are equal, or they differ at exactly one puncture.

\begin{mainthmcase}Suppose $\epsilon_\tau=\epsilon_\sigma$; then $\sigma^\circ=f_{\arc^\circ}(\tau^\circ)$, and if $j\in\sigma$ is the unique arc such that $\tau=f_j(\sigma)$, then $\tau^\circ=f_{j^\circ}(\sigma^\circ)$ (cf. Proposition \ref{prop:tagfunction-and-circ}, note that we also have $\tagfunction_{\epsilon_\tau}(i^\circ)=i$ and $\tagfunction_{\epsilon_\sigma}(j^\circ)=j$). By Theorem \ref{thm:ideal-flips<->mutations}, $\mu_{\arc^{\circ}}(Q(\tau^\circ),S(\tau^\circ,\epsilon_\tau\cdot\mathbf{x}))$ is right-equivalent to $(Q(\sigma^\circ),S(\sigma^\circ,\epsilon_\tau\cdot\mathbf{x}))=(Q(\sigma^\circ),S(\sigma^\circ,\epsilon_\sigma\cdot\mathbf{x}))$. This in particular means, since $Q(\sigma^\circ)$ is 2-acyclic, that we can write $\mu_{\arc^{\circ}}(Q(\tau^\circ),S(\tau^\circ,\epsilon_\tau\cdot\mathbf{x}))=(\mu_{\arc^\circ}(Q(\tau^\circ)),\mu_{\arc^{\circ}}(S(\tau^\circ,\epsilon_\tau\cdot\mathbf{x})))$. Thus, there exists an $R$-algebra isomorphism $\varphi:\RA{\mu_{\arc^\circ}(Q(\tau^\circ))}\rightarrow\RA{Q(\sigma^\circ)}$ such that $\varphi(\mu_{\arc^{\circ}}(S(\tau^\circ,\epsilon_\tau\cdot\mathbf{x})))\sim_{\operatorname{cyc}}S(\sigma^\circ,\epsilon_\sigma\cdot\mathbf{x})$.

By its very definition, the quiver $Q(\tau)$ is obtained from the quiver $Q(\tau^\circ)$ by replacing each vertex $\ell\in\tau^\circ$ with $\tagfunction_{\epsilon_\tau}(\ell)\in\tagfunction_{\epsilon_\tau}(\tau^\circ)=\tau$. Furthermore, the arrow set of $Q(\tau)$ is equal to the arrow set of $Q(\tau^\circ)$, and whenever $a$ is an arrow of $Q(\tau^\circ)$ going from $\ell_1\in\tau^\circ$ to $\ell_2\in\tau^\circ$, then $a$ itself is an arrow of $Q(\tau)$ going from $\tagfunction_{\epsilon_\tau}(\ell_1)\in\tau$ to $\tagfunction_{\epsilon_\tau}(\ell_2)\in\tau$. The quiver isomorphism $Q(\tau^\circ)\rightarrow Q(\tau)$ acting by $\ell\mapsto\tagfunction_{\epsilon_\tau}(\ell)$ on the vertices and as the identity on the arrows has been denoted by $\tagfunction_{\epsilon_\tau}$, and the $K$-algebra isomorphism $\RA{Q(\tau^\circ)}\rightarrow\RA{Q(\tau)}$ it induces has been denoted by $\tagfunction_{\epsilon_\tau}$ as well (see the paragraph right after Definition \ref{def:obvious-cycles}). By definition of $S(\tau,\mathbf{x})$ we have $S(\tau,\mathbf{x})=\tagfunction_{\epsilon_\tau}(S(\tau^\circ,\epsilon_\tau\cdot\mathbf{x}))\in \RA{Q(\tau)}$. Thus, since $\tagfunction_{\epsilon_\tau}(i^\circ)=i$, if we simultaneously apply the QP-mutation $\mu_{i^\circ}$ to $(Q(\tau^\circ),S(\tau^\circ,\epsilon_\tau\cdot\mathbf{x}))$ and the QP-mutation $\mu_i$ to $(Q(\tau),S(\tau,\mathbf{x}))$, we see that:
\begin{enumerate}
\item since the underlying quiver of $\mu_{i^\circ}(Q(\tau^\circ),S(\tau^\circ,\epsilon_\tau\cdot\mathbf{x}))$ is 2-acyclic (by Theorem \ref{thm:ideal-flips<->mutations}), the underlying quiver of $\mu_i(Q(\tau),S(\tau,\mathbf{x}))$ is 2-acyclic as well, and hence we can write $\mu_i(Q(\tau),S(\tau,\mathbf{x}))=(\mu_i(Q(\tau)),\mu_i(S(\tau,\mathbf{x})))$;
\item we can obtain $\mu_\arc(Q(\tau))$ from $\mu_{\arc^\circ}(Q(\tau^\circ))$ by replacing each vertex $\ell$ of $\mu_{\arc^\circ}(Q(\tau^\circ))$ with the vertex $\tagfunction_{\epsilon_\tau}(\ell)$ of $\mu_\arc(Q(\tau))$. The arrow sets of $\mu_\arc(Q(\tau))$ and $\mu_{\arc^\circ}(Q(\tau^\circ))$ then coincide, and whenever $a$ is an arrow of $\mu_{\arc^\circ}(Q(\tau^\circ))$ going from $\ell_1$ to $\ell_2$, then $a$ itself is an arrow of $\mu_\arc(Q(\tau))$ going from $\tagfunction_{\epsilon_\tau}(\ell_1)$ to $\tagfunction_{\epsilon_\tau}(\ell_2)$. In other words, we have a quiver isomorphism $\theta_{\epsilon_\tau}:\mu_{\arc^\circ}(Q(\tau^\circ))\rightarrow\mu_{\arc}(Q(\tau))$ acting by $\ell\mapsto\tagfunction_{\epsilon_\tau}(\ell)$ on the vertices and as the identity on the arrows. This quiver isomorphism induces a $K$-algebra isomorphism $\RA{\mu_{\arc^\circ}(Q(\tau^\circ))}\rightarrow\RA{\mu_{\arc}(Q(\tau))}$, which we also denote by $\theta_{\epsilon_\tau}$;
\item $\theta_{\epsilon_\tau}(\mu_{i^\circ}(S(\tau^\circ,\epsilon_\tau\cdot\mathbf{x})))=\mu_i(S(\tau,\mathbf{x}))$.
\end{enumerate}

On the other hand, the quiver $Q(\sigma)$ is obtained from the quiver $Q(\sigma^\circ)$ by replacing each vertex $\ell\in\sigma^\circ$ with $\tagfunction_{\epsilon_\sigma}(\ell)\in\tagfunction_{\epsilon_\sigma}(\sigma^\circ)=\sigma$, and we have a quiver isomorphism $\tagfunction_{\epsilon_\sigma}:Q(\sigma^\circ)\rightarrow Q(\sigma)$ acting by $\ell\mapsto\tagfunction_\sigma(\ell)$ on the vertices and as the identity on the arrows. This quiver isomorphism induces a $K$-algebra isomorphism $\RA{Q(\sigma^\circ)}\rightarrow\RA{Q(\sigma)}$, which we denote also by $\tagfunction_{\epsilon_\sigma}$. By definition of $S(\sigma,\mathbf{x})$ we have $S(\sigma,\mathbf{x})=\tagfunction_{\epsilon_\sigma}(S(\sigma^\circ,\epsilon_\sigma\cdot\mathbf{x}))\in \RA{Q(\sigma)}$.

The $K$-algebra isomorphism $\tagfunction_{\epsilon_\sigma}\varphi\theta_{\epsilon_\tau}^{-1}:\RA{\mu_{\arc}(Q(\tau))}
\rightarrow\RA{Q(\sigma)}$ satisfies
\begin{eqnarray}\nonumber
\tagfunction_{\epsilon_\sigma}\varphi\theta_{\epsilon_\tau}^{-1}(\mu_i(S(\tau,\mathbf{x}))) &=&
\tagfunction_{\epsilon_\sigma}\varphi(\mu_{i^\circ}(S(\tau^\circ,\epsilon_\tau\cdot\mathbf{x})))\\
\nonumber
&\sim_{\operatorname{cyc}}&
\tagfunction_{\epsilon_\sigma}(S(\sigma^\circ,\epsilon_\sigma\cdot\mathbf{x}))\\
\nonumber
&=&
S(\sigma,\mathbf{x}).
\end{eqnarray}
Furthermore, for $\ell\in\sigma^\circ\cap\tau^\circ$ we have
$\tagfunction_{\epsilon_\sigma}\varphi\theta_{\epsilon_\tau}^{-1}(\tagfunction_{\epsilon_\tau}(\ell))=\tagfunction_{\epsilon_\sigma}\varphi(\ell)=
\tagfunction_{\epsilon_\sigma}(\ell)=\tagfunction_{\epsilon_\tau}(\ell)\in\sigma\cap\tau$ (the last equality follows from the fact that $\epsilon_\sigma=\epsilon_\tau$). Therefore, $\tagfunction_{\epsilon_\sigma}\varphi\theta_{\epsilon_\tau}^{-1}$ acts as the identity on the elements of $\sigma\cap\tau$. Since $\sigma\cap\tau$ contains all but one of the vertices of the quiver $\mu_i(\qtau)$ (resp. $\qsigma$), we conclude that $\tagfunction_{\epsilon_\sigma}\varphi\theta_{\epsilon_\tau}^{-1}$ is a right-equivalence $\mu_i\qstau\rightarrow\qssigma$.
\end{mainthmcase}

\begin{mainthmcase} Now suppose that $\epsilon_\tau$ and $\epsilon_\sigma$ differ at exactly one puncture $q$. Since both flips and QP-mutations are involutive (the latter up to right equivalence), we can assume, without loss of generality,
that $\epsilon_\tau(q)=1=-\epsilon_\sigma(q)$. Then $\arc^\circ$ is a folded side of $\tau^\circ$ incident to the puncture $q$, and
$\tagfunction_{\epsilon_\tau\epsilon_\sigma}(\sigma^\circ)=f_{\tagfunction_{\mathbf{1}}(i^\circ)}(\tagfunction_{\mathbf{1}}(\tau^\circ))$. By Theorem \ref{thm:leaving-positive-stratum}, $\mu_{\tagfunction_{\mathbf{1}}(\arc^\circ)}(Q(\tagfunction_{\mathbf{1}}(\tau^\circ)),S(\tagfunction_{\mathbf{1}}(\tau^\circ),\epsilon_\tau\cdot\mathbf{x}))$ is right-equivalent to $(Q(\tagfunction_{\epsilon_\tau\epsilon_\sigma}(\sigma^\circ)),
S(\tagfunction_{\epsilon_\tau\epsilon_\sigma}(\sigma^\circ),\epsilon_\tau\cdot\mathbf{x}))$. This in particular means, since $Q(\tagfunction_{\epsilon_\tau\epsilon_\sigma}(\sigma^\circ))$ is 2-acyclic, that we can write $\mu_{\tagfunction_{\mathbf{1}}(\arc^\circ)}(Q(\tagfunction_{\mathbf{1}}(\tau^\circ)),S(\tagfunction_{\mathbf{1}}(\tau^\circ),\epsilon_\tau\cdot\mathbf{x}))
=(\mu_{\tagfunction_{\mathbf{1}}(\arc^\circ)}(Q(\tagfunction_{\mathbf{1}}(\tau^\circ))),
\mu_{\tagfunction_{\mathbf{1}}(\arc^\circ)}(S(\tagfunction_{\mathbf{1}}(\tau^\circ),\epsilon_\tau\cdot\mathbf{x})))$. Thus, there exists an $R$-algebra isomorphism $\varphi:\RA{\mu_{\tagfunction_{\mathbf{1}}(\arc^\circ)}(Q(\tagfunction_{\mathbf{1}}(\tau^\circ)))}\rightarrow
\RA{Q(\tagfunction_{\epsilon_\tau\epsilon_\sigma}(\sigma^\circ))}$ such that $\varphi(\mu_{\tagfunction_{\mathbf{1}}(i^\circ)}(S(\tagfunction_{\mathbf{1}}(\tau^\circ),\epsilon_\tau\cdot\mathbf{x})))
\sim_{\operatorname{cyc}}S(\tagfunction_{\epsilon_\tau\epsilon_\sigma}(\sigma^\circ),\epsilon_\tau\cdot\mathbf{x})$.

By the very definition of $Q(\tau)$ and $Q(\tagfunction_{\mathbf{1}}(\tau^\circ))$, the quiver $Q(\tau)$ can be obtained from the quiver $Q(\tagfunction_{\mathbf{1}}(\tau^\circ))$ by replacing each vertex $\ell\in\tagfunction_{\mathbf{1}}(\tau^\circ)$ with $\tagfunction_{\epsilon_\tau}(\tagfunction_{\mathbf{1}}^{-1}(\ell))\in\tagfunction_{\epsilon_\tau}(\tau^\circ)=\tau$. Furthermore, the arrow set of $Q(\tau)$ is equal to the arrow set of $Q(\tagfunction_{\mathbf{1}}(\tau^\circ))$, and whenever $a$ is an arrow of $Q(\tagfunction_{\mathbf{1}}(\tau^\circ))$ going from $\ell_1\in\tagfunction_{\mathbf{1}}(\tau^\circ)$ to $\ell_2\in\tagfunction_{\mathbf{1}}(\tau^\circ)$, then $a$ itself is an arrow of $Q(\tau)$ going from $\tagfunction_{\epsilon_\tau}(\tagfunction_{\mathbf{1}}^{-1}(\ell_1))\in\tau$ to $\tagfunction_{\epsilon_\tau}(\tagfunction_{\mathbf{1}}^{-1}(\ell_2))\in\tau$. We thus have a quiver isomorphism $Q(\tagfunction_{\mathbf{1}}(\tau^\circ))\rightarrow Q(\tau)$, to be denoted $\tagfunction_{\epsilon_\tau}\tagfunction_{\mathbf{1}}^{-1}$, acting by $\ell\mapsto\tagfunction_{\epsilon_\tau}(\tagfunction_{\mathbf{1}}^{-1}(\ell))$ on the vertices and as the identity on the arrows. The $K$-algebra isomorphism $\RA{Q(\tagfunction_{\mathbf{1}}(\tau^\circ))}\rightarrow\RA{Q(\tau)}$ it induces will be denoted by $\tagfunction_{\epsilon_\tau}\tagfunction_{\mathbf{1}}^{-1}$ as well (see the paragraph right after Definition \ref{def:obvious-cycles}). By definition of $S(\tau,\mathbf{x})$ and of $S(\tagfunction_{\mathbf{1}}(\tau^\circ),\epsilon_\tau\cdot\mathbf{x})$ we have $S(\tau,\mathbf{x})=\tagfunction_{\epsilon_\tau}(S(\tau^\circ,\epsilon_\tau\cdot\mathbf{x}))\in \RA{Q(\tau)}$ and
$S(\tagfunction_{\mathbf{1}}(\tau^\circ),\epsilon_\tau\cdot\mathbf{x})=
\tagfunction_{\mathbf{1}}(S((\tagfunction_{\mathbf{1}}(\tau^\circ))^\circ,\tagfunction_{\mathbf{1}}\cdot(\epsilon_\tau\cdot\mathbf{x})))=
\tagfunction_{\mathbf{1}}(S(\tau^\circ,\epsilon_\tau\cdot\mathbf{x}))$, hence $S(\tau,\mathbf{x})=\tagfunction_{\epsilon_\tau}\tagfunction_{\mathbf{1}}^{-1}(S(\tagfunction_{\mathbf{1}}(\tau^\circ),\epsilon_\tau\cdot\mathbf{x}))$. Thus, since $\tagfunction_{\epsilon_\tau}\tagfunction_{\mathbf{1}}^{-1}(\tagfunction_{\mathbf{1}}(i^\circ))=i$, if we simultaneously apply the QP-mutation $\mu_{\tagfunction_{\mathbf{1}}(i^\circ)}$ to $(Q(\tagfunction_{1}(\tau^\circ)),S(\tagfunction_{\mathbf{1}}(\tau^\circ),\epsilon_\tau\cdot\mathbf{x}))$ and the QP-mutation $\mu_i$ to $(Q(\tau),S(\tau,\mathbf{x}))$, we see that:
\begin{enumerate}
\item since the underlying quiver of $\mu_{\tagfunction_{\mathbf{1}}(i^\circ)}(Q(\tagfunction_{1}(\tau^\circ)),S(\tagfunction_{\mathbf{1}}(\tau^\circ),\epsilon_\tau\cdot\mathbf{x}))$ is 2-acyclic (by Theorem \ref{thm:leaving-positive-stratum}), the underlying quiver of $\mu_i(Q(\tau),S(\tau,\mathbf{x}))$ is 2-acyclic as well, and hence we can write $\mu_i(Q(\tau),S(\tau,\mathbf{x}))=(\mu_i(Q(\tau)),\mu_i(S(\tau,\mathbf{x})))$;
\item we can obtain $\mu_\arc(Q(\tau))$ from $\mu_{\tagfunction_{\mathbf{1}}(\arc^\circ)}(Q(\tagfunction_{\mathbf{1}}(\tau^\circ)))$ by replacing each vertex $\ell$ of $\mu_{\tagfunction_{\mathbf{1}}(\arc^\circ)}(Q(\tagfunction_{\mathbf{1}}(\tau^\circ)))$ with the vertex $\tagfunction_{\epsilon_\tau}\tagfunction_{\mathbf{1}}^{-1}(\ell)$ of $\mu_\arc(Q(\tau))$. The arrow sets of $\mu_\arc(Q(\tau))$ and $\mu_{\tagfunction_{\mathbf{1}}(\arc^\circ)}(Q(\tagfunction_{\mathbf{1}}(\tau^\circ)))$ then coincide, and whenever $a$ is an arrow of $\mu_{\tagfunction_{\mathbf{1}}(\arc^\circ)}(Q(\tagfunction_{\mathbf{1}}(\tau^\circ)))$ going from $\ell_1$ to $\ell_2$, then $a$ itself is an arrow of $\mu_\arc(Q(\tau))$ going from $\tagfunction_{\epsilon_\tau}\tagfunction_{\mathbf{1}}^{-1}(\ell_1)$ to $\tagfunction_{\epsilon_\tau}\tagfunction_{\mathbf{1}}^{-1}(\ell_2)$. In other words, we have a quiver isomorphism $\theta_{\epsilon_\tau}:\mu_{\tagfunction_{\mathbf{1}}(\arc^\circ)}(Q(\tagfunction_{\mathbf{1}}(\tau^\circ)))\rightarrow\mu_{\arc}(Q(\tau))$ acting by $\ell\mapsto\tagfunction_{\epsilon_\tau}\tagfunction_{\mathbf{1}}^{-1}(\ell)$ on the vertices and as the identity on the arrows. This quiver isomorphism induces a $K$-algebra isomorphism $\RA{\mu_{\tagfunction_{\mathbf{1}}(\arc^\circ)}(Q(\tagfunction_{\mathbf{1}}(\tau^\circ)))}\rightarrow\RA{\mu_{\arc}(Q(\tau))}$, which we also denote by $\theta_{\epsilon_\tau}$;
\item $\theta_{\epsilon_\tau}(\mu_{\tagfunction_{\mathbf{1}}(i^\circ)}(S(\tagfunction_{\mathbf{1}}(\tau^\circ),\epsilon_\tau\cdot\mathbf{x})))=
    \mu_i(S(\tau,\mathbf{x}))$.
\end{enumerate}

On the other hand, by the definition of the quivers $Q(\sigma)$ and $Q(\tagfunction_{\epsilon_\tau\epsilon_\sigma}(\sigma^\circ))$, it is possible to obtain $Q(\sigma)$ from $Q(\tagfunction_{\epsilon_\tau\epsilon_\sigma}(\sigma^\circ))$ by replacing each vertex $\ell\in\tagfunction_{\epsilon_\tau\epsilon_\sigma}(\sigma^\circ)$ with $\tagfunction_{\epsilon_\sigma}(\tagfunction_{\epsilon_\tau\epsilon_\sigma}^{-1}(\ell))\in\tagfunction_{\epsilon_\sigma}(\sigma^\circ)=\sigma$, and we have a quiver isomorphism $\tagfunction_{\epsilon_\sigma}\tagfunction_{\epsilon_\tau\epsilon_\sigma}^{-1}:Q(\tagfunction_{\epsilon_\tau\epsilon_\sigma}(\sigma^\circ))\rightarrow Q(\sigma)$ acting by $\ell\mapsto\tagfunction_\sigma(\tagfunction_{\epsilon_\tau\epsilon_\sigma}^{-1}(\ell))$ on the vertices and as the identity on the arrows. This quiver isomorphism induces a $K$-algebra isomorphism $\RA{Q(\tagfunction_{\epsilon_\tau\epsilon_\sigma}(\sigma^\circ))}\rightarrow\RA{Q(\sigma)}$, which we denote also by $\tagfunction_\sigma\tagfunction_{\epsilon_\tau\epsilon_\sigma}^{-1}$. By definition of $S(\sigma,\mathbf{x})$ and of $S(\tagfunction_{\epsilon_\tau\epsilon_\sigma}(\sigma^\circ),\epsilon_\tau\cdot\mathbf{x})$ we have $S(\sigma,\mathbf{x})=\tagfunction_{\epsilon_\sigma}(S(\sigma^\circ,\epsilon_\sigma\cdot\mathbf{x}))\in \RA{Q(\sigma)}$ and
$S(\tagfunction_{\epsilon_\tau\epsilon_\sigma}(\sigma^\circ),\epsilon_\tau\cdot\mathbf{x})= \tagfunction_{\epsilon_\tau\epsilon_\sigma}(S((\tagfunction_{\epsilon_\tau\epsilon_\sigma}(\sigma^\circ))^\circ,
(\epsilon_\tau\epsilon_\sigma)\cdot(\epsilon_\tau\cdot\mathbf{x})))=
\tagfunction_{\epsilon_\tau\epsilon_\sigma}(S(\sigma^\circ,\epsilon_\sigma\cdot\mathbf{x})))
\in\RA{Q(\tagfunction_{\epsilon_\tau\epsilon_\sigma}(\sigma^\circ))}$, hence
$S(\sigma,\mathbf{x})=
\tagfunction_{\epsilon_\sigma}\tagfunction_{\epsilon_\tau\epsilon_\sigma}^{-1}(S(\tagfunction_{\epsilon_\tau\epsilon_\sigma}(\sigma^\circ),\epsilon_\tau\cdot\mathbf{x}))$.

The $K$-algebra isomorphism $\tagfunction_{\epsilon_\sigma}\tagfunction_{\epsilon_\tau\epsilon_\sigma}^{-1}\varphi\theta_{\epsilon_\tau}^{-1}:
\RA{\mu_{\arc}(Q(\tau))}
\rightarrow
\RA{Q(\sigma)}$ satisfies
\begin{eqnarray}\nonumber
\tagfunction_{\epsilon_\sigma}\tagfunction_{\epsilon_\tau\epsilon_\sigma}^{-1}\varphi\theta_{\epsilon_\tau}^{-1}(\mu_i(S(\tau,\mathbf{x}))) &=&
\tagfunction_{\epsilon_\sigma}\tagfunction_{\epsilon_\tau\epsilon_\sigma}^{-1}\varphi(\mu_{\tagfunction_{\mathbf{1}}(i^\circ)}(S(\tagfunction_{\mathbf{1}}(\tau^\circ),\epsilon_\tau\cdot\mathbf{x})))\\
\nonumber
&\sim_{\operatorname{cyc}}&
\tagfunction_{\epsilon_\sigma}\tagfunction_{\epsilon_\tau\epsilon_\sigma}^{-1}(S(\tagfunction_{\epsilon_\tau\epsilon_\sigma}(\sigma^\circ),\epsilon_\tau\cdot\mathbf{x}))\\
\nonumber
&=&
S(\sigma,\mathbf{x}).
\end{eqnarray}
Furthermore, for $\ell\in\sigma\cap\tau$ we have $\tagfunction_{\mathbf{1}}\tagfunction_{\epsilon_\tau}^{-1}(\ell)\in \tagfunction_{\epsilon_\tau\epsilon_\sigma}(\sigma^\circ)$, and  $\tagfunction_{\epsilon_\tau\epsilon_\sigma}^{-1}\tagfunction_{\mathbf{1}}\tagfunction_{\epsilon_\tau}^{-1}(\ell)=\tagfunction_{\epsilon_\sigma}^{-1}(\ell)$, and hence
$\tagfunction_{\epsilon_\sigma}\tagfunction_{\epsilon_\tau\epsilon_\sigma}^{-1}\varphi\theta_{\epsilon_\tau}^{-1}(\ell)=
\tagfunction_{\epsilon_\sigma}\tagfunction_{\epsilon_\tau\epsilon_\sigma}^{-1}\varphi(\tagfunction_{\mathbf{1}}\tagfunction_{\epsilon_\tau}^{-1}(\ell))=
\tagfunction_{\epsilon_\sigma}\tagfunction_{\epsilon_\tau\epsilon_\sigma}^{-1}\tagfunction_{\mathbf{1}}\tagfunction_{\epsilon_\tau}^{-1}(\ell)=
\tagfunction_{\epsilon_\sigma}\tagfunction_{\epsilon_\sigma}^{-1}(\ell)=\ell$. Therefore, $\tagfunction_{\epsilon_\sigma}\tagfunction_{\epsilon_\tau\epsilon_\sigma}^{-1}\varphi\theta_{\epsilon_\tau}^{-1}$ acts as the identity on the elements of $\sigma\cap\tau$. Since $\sigma\cap\tau$ contains all but one of the vertices of the quiver $\mu_i(\qtau)$ (resp. $\qsigma$), we conclude that $\tagfunction_{\epsilon_\sigma}\tagfunction_{\epsilon_\tau\epsilon_\sigma}^{-1}\varphi\theta_{\epsilon_\tau}^{-1}$ is a right-equivalence $\mu_i\qstau\rightarrow\qssigma$.
\end{mainthmcase}

Theorem \ref{thm:tagged-flips<->mutations} is proved.
\end{proof}

\section{Non-degeneracy of the QPs $\qstau$}\label{sec:nondegeneracy}

The following is an immediate consequence of Theorem \ref{thm:tagged-flips<->mutations}.

\begin{coro}\label{coro:non-degenerate} The QPs associated to tagged triangulations of $\surf$ are non-degenerate provided $\surf$ is not a sphere with less than 6 punctures, that is, provided $\surf$ is one of the following:
\begin{itemize}\item a surface with non-empty boundary, with or without punctures, and arbitrary genus;
\item a positive-genus surface without boundary, and any number of punctures;
\item a sphere with at least 6 punctures.
\end{itemize}
Thus, Conjecture 33 of \cite{Labardini1} holds for all these surfaces.
\end{coro}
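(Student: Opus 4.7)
The plan is to deduce non-degeneracy directly from Theorem \ref{thm:tagged-flips<->mutations} by a straightforward induction on the length of the mutation sequence. Recall that a QP $(Q,S)$ is \emph{non-degenerate} if for every finite sequence of admissible vertices $i_1,i_2,\ldots,i_n$ the iterated mutation $\mu_{i_n}\cdots\mu_{i_1}(Q,S)$ is well-defined, equivalently, if the underlying quiver of the reduced QP produced at each intermediate step has no $2$-cycles so that the next mutation can be performed.

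First I would fix a tagged triangulation $\tau_0$ of $\surf$ and an arbitrary sequence of vertices $i_1,\ldots,i_n$ of $\qtau[\tau_0]$. Two elementary combinatorial facts do all the work: (a)~for every tagged triangulation $\rho$ of $\surf$ and every tagged arc $k\in\rho$, the flip $f_k(\rho)$ exists and is again a tagged triangulation (so the vertices of $Q(\rho)$ may be canonically relabeled by the arcs of $f_k(\rho)$, and the sequence of flips $\tau_k=f_{i_k}(\tau_{k-1})$ is well-defined); and (b)~for any tagged triangulation $\rho$, the signed-adjacency quiver $Q(\rho)$ is $2$-acyclic by definition, being obtained from $\widehat{Q}(\rho)$ precisely by deleting all $2$-cycles. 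Consequently $\qstau[\rho]$ is automatically a reduced QP with $2$-acyclic underlying quiver.

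Then I would induct on $k$, the inductive hypothesis being that
\[
\mu_{i_{k-1}}\cdots\mu_{i_1}(\qtau[\tau_0],S(\tau_0,\mathbf{x}))
\quad\text{is right-equivalent to}\quad
(\qtau[\tau_{k-1}],S(\tau_{k-1},\mathbf{x})).
\]
Because $Q(\tau_{k-1})$ is $2$-acyclic, the mutation $\mu_{i_k}$ is legal on the right-hand side, and Theorem \ref{thm:tagged-flips<->mutations} applied to the flip $\tau_k=f_{i_k}(\tau_{k-1})$ yields that $\mu_{i_k}(\qtau[\tau_{k-1}],S(\tau_{k-1},\mathbf{x}))$ is right-equivalent to $(\qtau[\tau_k],S(\tau_k,\mathbf{x}))$, whose underlying quiver is again $2$-acyclic. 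Combined with the compatibility of mutation with right-equivalence (\cite[Corollary 5.4]{DWZ1}), this advances the induction one step and shows that the mutation sequence can be continued indefinitely.

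In short, there is essentially no obstacle: the corollary is a direct bookkeeping consequence of the flip/QP-mutation compatibility established in Theorem \ref{thm:tagged-flips<->mutations}, together with the trivial observation that every tagged triangulation admits the flip of every one of its tagged arcs and produces a $2$-acyclic signed-adjacency quiver. The only surface excluded is the $5$-punctured sphere, precisely because Theorem \ref{thm:tagged-flips<->mutations} is proved under that assumption.
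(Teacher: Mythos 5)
Your argument is correct and is precisely the one the paper intends: the corollary is stated there as an ``immediate consequence'' of Theorem \ref{thm:tagged-flips<->mutations}, and the induction you spell out (every tagged arc of every tagged triangulation is flippable, every signed-adjacency quiver is $2$-acyclic, and each mutation step is matched by a flip via the theorem together with \cite[Corollary 5.4]{DWZ1}) is exactly the bookkeeping the author leaves implicit.
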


\begin{remark}\begin{enumerate}\item In \cite[Theorem 31]{Labardini1} it was shown that the QPs associated to ideal triangulations of (possibly punctured) surfaces with non-empty boundary are rigid, and this automatically implied their non-degeneracy by \cite[Corollary 8.2]{DWZ1}. Here we have presented a different proof of the non-degeneracy of these QPs: no allusion to rigidity has been made, and the potentials corresponding to tagged triangulations, missing in \cite{Labardini1}, have been calculated.
\item Since positive-genus closed surfaces with exactly one puncture do not posses ideal triangulations with self-folded triangles whatsoever, Theorem \ref{thm:ideal-flips<->mutations} (which is Theorem 30 of \cite{Labardini1}) already implies the non-degeneracy of the QPs associated to triangulations of these surfaces, so the Popping Theorem is completely superfluous in this case.
\item Before the present paper was posted, the only empty-boundary surfaces for which the non-degeneracy of the QPs $\qstau$ had been shown, are the positive-genus closed surfaces with exactly one puncture. Corollary \ref{coro:non-degenerate} is therefore a strong improvement regarding the question of non-degeneracy, solving it in practically all cases.
\item It has been shown in \cite{GLFS} that if $\tau$ and $\sigma$ are tagged triangulations of a 5-punctured sphere, and $\sigma=f_{i}(\tau)$ for a tagged arc $i\in\tau$, then $\mu_i\qstau$ is right-equivalent to $(\qsigma,\lambda\ssigma)$ for some non-zero scalar $\lambda\in K$. This implies the non-degeneracy of the QPs $\qstau$ associated to the tagged triangulations of the 5-punctured sphere; that is, Corollary \ref{coro:non-degenerate} holds for the 5-punctured sphere too.
\end{enumerate}
\end{remark}

\section{Irrelevance of signs and scalars attached to the punctures}\label{sec:irrelevant}

Let $\surf$ be any surface. A standard construction associates a \emph{dimer model} $D(\sigma)$ to each ideal triangulation $\sigma$ of $\surf$ without self-folded triangles. For the reader's convenience we give a quick description of this construction.

Pick an arbitrary point $q_{\triangle}$ inside each ideal triangle $\triangle$ of $\sigma$, in such a way that $q_\triangle$ does not lie on an arc of $\sigma$. Define a bipartite graph $D(\sigma)$ as follows:
\begin{itemize}
\item The vertex set of $D(\sigma)$ is $\marked\cup\{q_\triangle\suchthat\triangle$ is an ideal triangle of $\sigma\}$;
\item given $p\in\marked$ and $q_\triangle\in\triangle$ with $\triangle$ ideal triangle of $\sigma$, connect $p$ and $q_\triangle$ with an edge if and only if $p\in\triangle$;
\item besides the edges just introduced, do not include more edges between vertices of $D(\sigma)$.
\end{itemize}

Since $\sigma$ does not have self-folded triangles, we can suppose that the edges of $D(\sigma)$ have been drawn on the surface, and that whenever $p\in\marked$ and $q_\triangle\in\triangle$ are connected by an edge of $D(\sigma)$, such edge is entirely contained in $\triangle$ and the only point in common it has with any arc of $\tau$ is $p$.

The straightforward proof of the following lemma is left to the reader.

\begin{lemma}\label{lemma:nice-dimer} Suppose $\partial\Sigma\neq\varnothing$. Then $\surf$ has an ideal triangulation $\tau$ without self-folded triangles such that
every puncture of $\surf$ can be connected to a marked point lying on $\partial\Sigma$ through a simple walk on $D(\tau)$ (that is, a walk that does not repeat vertices of $D(\tau)$).
\end{lemma}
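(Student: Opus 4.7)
The plan is to build $\tau$ as a ``star'' of arcs emanating from a single boundary marked point $b$ to every puncture of $\surf$, supplemented with enough additional arcs at each puncture to rule out self-folded triangles, and then to extend this compatible collection to an ideal triangulation.

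First I would fix a marked point $b\in\marked\setminus\punct$ lying on $\partial\Sigma$, which exists because $\partial\Sigma\neq\varnothing$. For each puncture $p\in\punct$ I would choose a simple ordinary arc $\alpha_p$ joining $b$ to $p$, done in a fan-like pattern inside a small disk neighborhood of $b$ so that the arcs $\{\alpha_p\}_{p\in\punct}$ are pairwise disjoint in their interiors, and hence form a pairwise compatible collection. Next, for each puncture $p$ I would add one more arc $\beta_p$ incident to $p$, compatible with everything chosen so far, so that the full collection $\{\alpha_p,\beta_p\}_{p\in\punct}$ is pairwise compatible and every puncture is incident to at least two of its arcs. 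The existence of such a $\beta_p$ relies on the exclusions in Definition~\ref{def:surf-with-marked-points}, which precisely forbid those degenerate surfaces where a puncture admits only one compatible incident arc.

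I would then extend $\{\alpha_p,\beta_p\}_{p\in\punct}$ to an ideal triangulation $\tau$ of $\surf$, which is always possible because any finite compatible collection of ordinary arcs is contained in some ideal triangulation. Since every puncture has at least two arcs of $\tau$ incident to it, no puncture can be the interior puncture of a self-folded triangle of $\tau$; hence $\tau$ has no self-folded triangles at all. For each puncture $p$, the arc $\alpha_p\in\tau$ is the common side of two ideal triangles of $\tau$, each of which has both $p$ and $b$ among its corners. Choosing either such triangle $\triangle$, the sequence $p,\,q_\triangle,\,b$ is a simple walk in $D(\tau)$ from $p$ to the boundary marked point $b$, as required.

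The main obstacle will be the compatible construction of the auxiliary arcs $\beta_p$ in the smallest or most degenerate cases (for instance a once-punctured triangle, or a twice-punctured digon): this is a topological-combinatorial step that either requires a short case analysis or a uniform argument using the genus and boundary data, and it is precisely here that the assumptions in Definition~\ref{def:surf-with-marked-points} are used. Once this step is settled, the rest of the proof is immediate from the local description of $D(\tau)$ around the arc $\alpha_p$.
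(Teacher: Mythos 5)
The paper gives no proof of this lemma (it is explicitly ``left to the reader''), so there is nothing to compare against; judged on its own, your construction is sound and its skeleton is complete. The reduction is correct on all the points that matter: a fan of pairwise compatible arcs $\alpha_p$ from a single boundary marked point $b$ to all punctures exists (cutting along an arc with one endpoint on $\partial\Sigma$ never disconnects the surface, so the arcs can be drawn one at a time); any puncture incident to at least two arcs of $\tau$ cannot be the puncture enclosed by a self-folded triangle, so your incidence condition does kill all self-folded triangles; and since $\alpha_p$ is then a side of two distinct ideal triangles $\triangle$, each having both $p$ and $b$ as corners, the length-two walk $p,\,q_\triangle,\,b$ is simple and reaches $\partial\Sigma$. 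This is in fact a stronger conclusion (walks of length $2$) than the lemma demands.

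The only step you leave open is the existence of the auxiliary arcs $\beta_p$, and here you can avoid the case analysis entirely, so let me close it for you. Complete the compatible collection $\{\alpha_p\}_{p\in\punct}$ to an arbitrary ideal triangulation $T$. If some puncture $p$ is incident to exactly one arc of $T$, that arc must be $\alpha_p$, so $p$ sits inside a self-folded triangle whose loop $\ell_p$ is based at $b$; in particular $\ell_p\notin\{\alpha_q\}_{q\in\punct}$. Flipping $\ell_p$ removes only $\ell_p$ (which is incident to no puncture) and produces a second arc at $p$, so no other puncture loses incident arcs and all the $\alpha_q$ survive. Performing this flip once for each self-folded triangle of $T$ yields the desired $\tau$; the exclusions of Definition \ref{def:surf-with-marked-points} are not actually needed beyond guaranteeing that ideal triangulations exist at all. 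With this substitution your argument is a complete proof.
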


The absence of self-folded triangles in $\tau$ allows us to draw the unreduced signed-adjacency quiver $\unredqtau$ on $\surf$ as well. We notice then that $\unredqtau$ is a subquiver of the \emph{dimer dual} of $D(\tau)$.

We are now ready to show that the signs and scalars appearing in $\stau$ are irrelevant whenever $\tau$ is a tagged triangulation of a surface with non-empty boundary. More precisely:

\begin{prop}\label{prop:scalars-are-irrelevant} Let $\surf$ be a surface with non-empty boundary and let $\mathbf{x}=(x_p)_{p\in\punct}$ and $\mathbf{y}=(y_p)_{p\in\punct}$ be any two choices of non-zero scalars. Then for every tagged triangulation $\tau$ of $\surf$, the QPs $(Q(\tau),S(\tau,\mathbf{x}))$ and $(Q(\tau),S(\tau,\mathbf{y}))$ are right-equivalent.
\end{prop}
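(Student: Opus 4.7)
The plan is to reduce the proposition to a single, carefully chosen ideal triangulation and then construct the required right-equivalence by rescaling arrows along walks in the dimer model.

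First I would invoke Theorem \ref{thm:tagged-flips<->mutations} together with \cite[Corollary 5.4]{DWZ1}: if $\sigma=f_i(\tau)$ and the statement is known for $\tau$, then applying $\mu_i$ to the two right-equivalent QPs $(Q(\tau),S(\tau,\mathbf{x}))$ and $(Q(\tau),S(\tau,\mathbf{y}))$ produces right-equivalent QPs whose reduced parts coincide (up to right-equivalence) with $(Q(\sigma),S(\sigma,\mathbf{x}))$ and $(Q(\sigma),S(\sigma,\mathbf{y}))$. Because $\partial\Sigma\neq\varnothing$, Proposition \ref{prop:ideal-triangs-seqs-of-flips} guarantees that any two tagged triangulations of $\surf$ are connected by a flip-sequence, so it suffices to establish the proposition for one specific tagged triangulation. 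I take $\tau_0$ to be the ideal triangulation without self-folded triangles provided by Lemma \ref{lemma:nice-dimer}, so that in $D(\tau_0)$ every puncture is joined to some boundary marked point by a simple walk.

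For this $\tau_0$ the weak signature $\epsilon_{\tau_0}$ is identically $1$; since there are no self-folded triangles every term $\widehat{U}^\triangle$ vanishes and no folded-side contribution appears, hence
\[
\widehat{S}(\tau_0,\mathbf{x}) \;=\; \sum_{\triangle}\alpha^{\triangle}\beta^{\triangle}\gamma^{\triangle} \;+\; \sum_{p\in\punct} x_p\, a^p_1\cdots a^p_{d_p},
\]
and passing from $\mathbf{x}$ to $\mathbf{y}$ changes only the scalars attached to the puncture cycles. Under the natural bijection between arrows of $\widehat{Q}(\tau_0)$ and edges of $D(\tau_0)$ (each corner of a triangle at a marked point yielding one arrow/edge), a triangle cycle is the product of the edges meeting at a white vertex $q_\triangle$ and a puncture cycle is the product of the edges meeting at a black vertex $p\in\punct$, while edges incident to boundary marked points contribute no cycle at all. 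I will then transport each change $x_p\mapsto y_p$ to the boundary along such a walk, one puncture at a time. Given $p\in\punct$, fix a simple walk $p=v_0,e_1,v_1,\ldots,e_{2m},v_{2m}=q$ in $D(\tau_0)$ ending at a boundary marked point $q$, and let $a_j$ be the arrow corresponding to $e_j$. Define an $R$-algebra automorphism $\varphi_p$ of $R\langle\langle\widehat{Q}(\tau_0)\rangle\rangle$ by $a_j\mapsto (y_p/x_p)^{(-1)^{j+1}}a_j$ and the identity on every other arrow. Simplicity of the walk guarantees that each intermediate vertex $v_j$ with $0<j<2m$ (white or black) is incident to \emph{exactly} the two walk-edges $e_j,e_{j+1}$, whose combined scalar is $(y_p/x_p)^{(-1)^{j+1}+(-1)^{j+2}}=1$, so every triangle cycle and every puncture cycle at $v_j$ keeps its current scalar; at $v_0=p$ only $e_1$ is touched, so the $p$-cycle is multiplied by $y_p/x_p$; and $v_{2m}=q$ contributes no cycle. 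Thus $\varphi_p$ is a right-equivalence that updates only the $p$-scalar from its current value to $y_p$.

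Composing the $\varphi_p$ over all $p\in\punct$ in any order produces a right-equivalence $(\widehat{Q}(\tau_0),\widehat{S}(\tau_0,\mathbf{x}))\to(\widehat{Q}(\tau_0),\widehat{S}(\tau_0,\mathbf{y}))$, which by the Splitting Theorem \cite[Theorem 4.6]{DWZ1} descends to a right-equivalence between the reduced QPs $(Q(\tau_0),S(\tau_0,\mathbf{x}))$ and $(Q(\tau_0),S(\tau_0,\mathbf{y}))$; combined with the first step this finishes the proof. The main obstacle is precisely what Lemma \ref{lemma:nice-dimer} is tailored to remove: without a \emph{simple} walk one would revisit some vertex and be forced to impose two conflicting scalars on the same edge, and without an endpoint on $\partial\Sigma$ one would have nowhere to discharge the factor $y_p/x_p$. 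This is where the hypothesis of non-empty boundary is used in an essential way, consistent with the fact that the analogous statement requires different arguments (or fails) in the empty-boundary setting.
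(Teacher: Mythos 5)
Your proposal is correct and follows essentially the same route as the paper: reduce via Theorem \ref{thm:tagged-flips<->mutations} and flip-connectedness to the triangulation of Lemma \ref{lemma:nice-dimer}, then rescale the arrows along a simple walk in $D(\tau)$ from each puncture to the boundary with alternating factors, one puncture at a time, and pass to reduced parts. The only (immaterial) differences are that the paper normalizes each scalar to $1$ rather than transporting $x_p$ directly to $y_p$, and that the correspondence between walk-edges and arrows is an injection rather than a bijection (the last edge, at the boundary marked point, may fail to determine an arrow), which your argument already absorbs.
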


\begin{proof} By Theorem \ref{thm:tagged-flips<->mutations}, Lemma \ref{lemma:nice-dimer} and the fourth assertion of Proposition \ref{prop:ideal-triangs-seqs-of-flips}, we can assume, without loss of generality, that $\tau$ is as in the conclusion of Lemma \ref{lemma:nice-dimer}.

Let $p_1,\ldots,p_{|\punct|}$, be any ordering of the punctures of $\surf$. For $\ell=0,1,\ldots,|\punct|$, let
$\mathbf{y}_{\ell}=(y_{\ell,p})_{p\in\punct}$ be the choice of scalars defined as follows:
$$
y_{\ell,p}=\begin{cases}
1 & \text{if $p\in\{p_1,\ldots,p_\ell\}$},\\
x_p & \text{otherwise}.
\end{cases}
$$
Since compositions of right-equivalences is right-equivalence, to prove the proposition it is sufficient to show that
$(Q(\tau),S(\tau,\mathbf{y}_\ell))$ is right-equivalent to $(Q(\tau),S(\tau,\mathbf{y}_{\ell+1}))$ for $\ell=0,\ldots,|\punct|-1$ (notice that $(Q(\tau),S(\tau,\mathbf{y}_0))=\qstau$), and in order to do so, it is sufficient to show that
$(\widehat{Q}(\tau),\widehat{S}(\tau,\mathbf{y}_\ell))$ is right-equivalent to $(\widehat{Q}(\tau),\widehat{S}(\tau,\mathbf{y}_{\ell+1}))$ (this is because reduced parts of QPs are uniquely determined up to right-equivalence).

Let $w=(q_1,e_1,q_2,e_2,\ldots,q_{t-1},e_{t-1},q_t)$ be a simple walk on $D(\tau)$ such that
\begin{itemize}
\item $q_1=p_{n+1}$,
\item $q_t$ lies on $\partial\Sigma$;
\item none of the vertices $q_1,\ldots,q_{t-1}$ of $D(\tau)$ lies on $\partial\Sigma$.
\end{itemize}
Each edge $e_s$, $s=1,\ldots,t-2$, uniquely determines an arrow $\alpha_s$ of $\unredqtau$, whereas $e_{t-1}$ may or may not do so. In any case, all arrows determined by these edges are pairwise distinct. This implies that the $R$-algebra automorphism $\varphi$ of $\RA{\unredqtau}$ defined by
$$
\varphi(\alpha_s)=\begin{cases}
x_{p_{n+1}}^{-1}\alpha_s & \text{if $s$ is odd},\\
x_{p_{n+1}}\alpha_s & \text{if $s$ is even},
\end{cases}
$$
is a right-equivalence $\varphi:(\widehat{Q}(\tau),\widehat{S}(\tau,\mathbf{y}_{\ell}))\rightarrow(\widehat{Q}(\tau),\widehat{S}(\tau,\mathbf{y}_{\ell+1}))$.
\end{proof}

\begin{ex}\label{ex:walk-on-dimer} In Figure \ref{Fig:dimer_3punc_hexagon} we can see an ideal triangulation $\tau$, with the bipartite graph $D(\tau)$ drawn on the surface as well. The walk on $D(\tau)$ that consists of the edges of $D(\tau)$ that have been drawn bolder, helps to get rid of the scalar attached to the puncture in the center of the surface. $\hfill{\blacktriangle}$
        \begin{figure}[!h]
                \caption{Getting rid of scalars attached to the punctures}\label{Fig:dimer_3punc_hexagon}
                \centering
                \includegraphics[scale=.7]{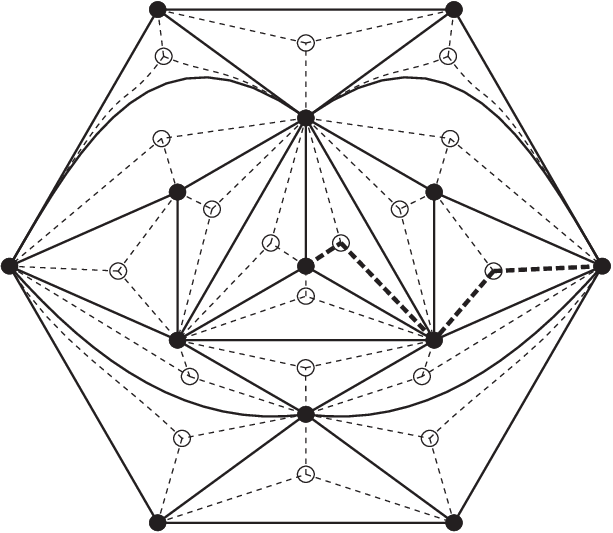}
        \end{figure}

\end{ex}

Proposition \ref{prop:scalars-are-irrelevant} implies that the QP defined in \cite{CI-LF} for an arbitrary tagged triangulation of a surface with non-empty boundary is right-equivalent to the QP $\qstau$ given in Definition \ref{def:QP-of-tagged-triangulation} above. Indeed, the one defined in \cite{CI-LF} and the one defined here differ only by the appearance of negative signs at some cycles corresponding to punctures.

For surfaces with empty boundary, we have the following :

\begin{prop}[{\cite{GLFS}}]\label{prop:scalars-are-irrelevant-empty-bound} Let $\surf$ be a surface with empty boundary and let $\mathbf{x}=(x_p)_{p\in\punct}$ and $\mathbf{y}=(y_p)_{p\in\punct}$ be any two choices of non-zero scalars. If the ground field $K$ is algebraically closed, then for every tagged triangulation $\tau$ of $\surf$, the QPs $(Q(\tau),S(\tau,\mathbf{x}))$ and $(Q(\tau),\lambda S(\tau,\mathbf{y}))$ are right-equivalent for some non-zero scalar $\lambda$.
\end{prop}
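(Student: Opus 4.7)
First I would reduce to the case where $\tau$ is an ideal triangulation $T$ without self-folded triangles. Such a $T$ exists by Proposition \ref{prop:ideal-triangs-seqs-of-flips}, and (modulo the once-punctured case, where one also needs a fully notched counterpart of $T$ handled by the obvious symmetry) every tagged triangulation of $\surf$ is connected to $T$ by a finite sequence of flips. Combining Theorem \ref{thm:tagged-flips<->mutations} with the fact that $\mu_k(Q,\lambda S)\simeq(Q',\lambda S')$ whenever $\mu_k(Q,S)\simeq(Q',S')$, the proposition for $T$ would imply it for every other tagged triangulation with the same $\lambda$. The commutativity of scalar multiplication with mutation can be verified by noting that premutation transforms $\lambda[S]+\Delta_k(Q)$ into $\lambda([S]+\Delta_k(Q))$ via the $R$-algebra automorphism $a^*\mapsto\lambda a^*$ together with the identity on all other arrows, and that Derksen--Weyman--Zelevinsky's reduction trivially commutes with scalar multiplication.

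Fix such a $T$. Its potential takes the simple form
$$S(T,\mathbf{x}) \;=\; \sum_{\triangle}\widehat{S}^{\triangle}(T) \;+\; \sum_{p\in\punct} x_p\,\widehat{S}^p(T),$$
and I would seek the desired right-equivalence as a diagonal $R$-algebra automorphism $\varphi:a\mapsto c_a a$ of $R\langle\langle Q(T)\rangle\rangle$. Applying $\varphi$ multiplies the coefficient of each triangle cycle $\widehat{S}^{\triangle}$ by $\prod_{a\in\triangle}c_a$ and the coefficient of each puncture cycle $\widehat{S}^p$ by $\prod_{a\text{ at }p}c_a$, so $\varphi$ serves as a right-equivalence $(Q(T),S(T,\mathbf{x}))\to(Q(T),\lambda S(T,\mathbf{y}))$ precisely when
$$\prod_{a\in\triangle}c_a = \lambda \text{ for every }\triangle, \qquad \prod_{a\text{ at }p}c_a = \lambda\,y_p/x_p \text{ for every }p\in\punct.$$

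I would interpret this system as a condition on the multiplicative boundary map of the bipartite \emph{dimer graph} $D(T)$, whose vertex set is $\punct\sqcup\{q_\triangle\}_\triangle$ and whose edges are in canonical bijection with the arrows of $Q(T)$ via the corners they represent. Since $\partial\Sigma=\varnothing$, each arrow contributes to exactly one triangle cycle and exactly one puncture cycle, and since $D(T)$ is connected, the image of the multiplicative boundary map $\partial\colon(\field^*)^{\text{edges}}\to(\field^*)^{\text{vertices}}$ is precisely $\{f:\prod_{\triangle}f(\triangle)=\prod_p f(p)\}$. Substituting our target values yields the single equation $\lambda^{F-V} = \prod_p y_p/x_p$, where $F$ and $V$ count respectively the triangles and punctures of $T$. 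An easy Euler characteristic computation ($2E=3F$ combined with $V-E+F=2-2g$) gives $F-V=V-4+4g$, which is non-zero for every empty-boundary surface other than the four-punctured sphere. In all those cases one simply takes $\lambda$ to be a $(V-4+4g)$-th root of $\prod_p y_p/x_p$ in $\field^*$.

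The hard step is the four-punctured sphere, where $F=V=4$ forces the constraint $\prod_p y_p = \prod_p x_p$ which is not satisfied for generic $\mathbf{x},\mathbf{y}$. There the purely diagonal rescaling is inadequate and must be supplemented by non-diagonal unitriangular automorphisms in the spirit of the techniques of Section~\ref{sec:pop-is-re}, or alternatively one can exploit the very explicit presentation of the QPs arising from ideal triangulations of the four-punctured sphere in order to produce directly a right-equivalence absorbing the extra scalar into the non-diagonal part of an automorphism. This special surface is the only genuinely delicate step of the proof; the rest is essentially a combinatorial exercise on the bipartite graph $D(T)$.
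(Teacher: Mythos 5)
The paper does not actually prove this proposition: it is quoted from \cite{GLFS}, a preprint in preparation, so there is no in-text argument to measure yours against. Your strategy is nevertheless the natural empty-boundary analogue of the paper's proof of Proposition \ref{prop:scalars-are-irrelevant}: there the scalars are pushed one puncture at a time along simple walks of the dimer graph $D(\tau)$ until they fall off the boundary, whereas you solve the corresponding multiplicative system globally on $D(T)$ and absorb the one-dimensional cokernel of the bipartite incidence map (the relation $\prod_\triangle=\prod_p$) into the scalar $\lambda$. For every closed surface with $F-V=V-4+4g\neq 0$ this is correct, modulo three points you should make explicit: (i) your reduction of an arbitrary tagged triangulation to $T$ uses Theorem \ref{thm:tagged-flips<->mutations}, which excludes the $5$-punctured sphere, so for non-ideal tagged triangulations of that surface the first step has no support in this paper (for ideal triangulations Theorem \ref{thm:ideal-flips<->mutations} suffices); (ii) $Q(T)$ may differ from $\unredqtau$ when some puncture has valency $2$, so either choose $T$ avoiding this or run the diagonal rescaling on $(\unredqtau,\unredstau)$ and invoke uniqueness of reduced parts, exactly as the paper does in Proposition \ref{prop:scalars-are-irrelevant}; (iii) the existence of $\lambda$ with $\lambda^{F-V}=\prod_p y_p/x_p$ needs $\field^*$ to contain such a root, which is automatic over $\mathbb{C}$ but not over an arbitrary field (the surjectivity of the incidence map onto its saturation, by contrast, holds over any field by total unimodularity of bipartite incidence matrices, so that part is fine).

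The genuine gap is your last step. For the $4$-punctured sphere the difficulty is not that the diagonal Ansatz is ``inadequate and must be supplemented by non-diagonal unitriangular automorphisms'': no automorphism whatsoever can work when $\prod_p x_p\neq\prod_p y_p$. Take $T$ to be the tetrahedral triangulation ($V=F=4$, $E=6$, all punctures of valency $3$). Between any two vertices of $Q(T)$ there is at most one arrow, so the linear part $\varphi^{(1)}$ of \emph{any} right-equivalence is a diagonal rescaling $a\mapsto c_a a$; moreover every cycle occurring in $S(T,\mathbf{x})$ has length $3$, and the eight cycles $\widehat{S}^\triangle$, $\widehat{S}^p$ lie in pairwise distinct rotation classes. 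Comparing degree-$3$ components therefore forces $\prod_{a\in\triangle}c_a=\lambda$ for each triangle and $x_p\prod_{a\text{ at }p}c_a=\lambda y_p$ for each puncture; multiplying the four triangle equations gives $\prod_a c_a=\lambda^4$, multiplying the four puncture equations gives $\bigl(\prod_p x_p\bigr)\prod_a c_a=\lambda^4\prod_p y_p$, whence $\prod_p x_p=\prod_p y_p$. So $\prod_p y_p/\prod_p x_p$ is an invariant of weak right-equivalence for the $4$-punctured sphere (consistent with the tubular phenomena attached to this surface, cf.\ \cite{BG}), and by Theorem \ref{thm:ideal-flips<->mutations} the same obstruction propagates to every triangulation of it. Your ``hard step'' is thus not a step awaiting completion but an impossibility: the statement can only hold for the $4$-punctured sphere under the extra hypothesis $\prod_p x_p=\prod_p y_p$, and your proof should either exclude that surface or record this restriction.
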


\begin{remark} The original motivation for working with arbitrary choices $(x_p)_{p\in\punct}$ of non-zero scalars, rather than with the particular choices $\mathbf{1}=(1)_{p\in\punct}$ and $-\mathbf{1}=(-1)_{p\in\punct}$, came from the wish of being able to obtain as many non-degenerate potentials as possible.
\end{remark}

\section{Jacobian algebras are finite-dimensional}\label{sec:Jacobi-finiteness}

It is natural to ask whether the Jacobian algebras of the QPs arising from (tagged) triangulations are finite-dimensional or not.
For polygons with at most one puncture, the answer was known to Caldero, Chapoton and Schiffler (cf. \cite{CCS1} and \cite{CCS2})
and Schiffler (cf. \cite{S}), although these authors did not use the language of potentials and Jacobian algebras in the referred
papers.
For more general surfaces, the first partial answers to the Jacobi-finiteness question were given in \cite{ABCP}, \cite{Labardini1} and \cite{Labardini2}.

Assem, Br\"{u}stle, Charbonneau-Jodoin and Plamondon proved:

\begin{thm}[{\cite{ABCP}}]\label{thm:unpunct-fin-dim} If $\surf$ is an unpunctured surface, then all QPs of ideal triangulations of $\surf$ have finite-dimensional Jacobian algebras.
\end{thm}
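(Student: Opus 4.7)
The plan is to compute the Jacobian relations explicitly in the unpunctured case and show directly that nonzero paths in the Jacobian algebra of $\qstau$ are uniformly bounded in length.

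First I would unravel what $\stau$ looks like when $\punct=\varnothing$. By Definition~\ref{def:obvious-cycles} the contributions $\widehat{U}^\triangle(\tau,\mathbf{x})$ and $\widehat{S}^p(\tau,\mathbf{x})$ are vacuous, so $\stau=\sum_\triangle\widehat{S}^\triangle(\tau)$ reduces to a sum of oriented $3$-cycles $\alpha_\triangle\beta_\triangle\gamma_\triangle$ over the interior triangles of $\tau$. No reduction is ever needed since in the unpunctured case no puncture can be incident to exactly two arcs of $\tau$, so $\qtau=\unredqtau$. The cyclic derivatives are generated by the length-$2$ subpaths of these oriented triangles, hence in the Jacobian algebra any composition $\alpha\beta$ vanishes exactly when $\alpha$ and $\beta$ are two consecutive sides of a single interior-triangle $3$-cycle.

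Next I would describe nonzero paths combinatorially. Each arrow $\alpha:i\to j$ of $\qtau$ lives at a common marked point $p(\alpha)\in\marked$ of the arcs $i$ and $j$, and is associated with a unique triangle $\triangle(\alpha)$ of $\tau$ having $i$ and $j$ as two of its sides. A product $\alpha_1\cdots\alpha_n$ can be nonzero in the Jacobian algebra only if, for every consecutive pair, either $p(\alpha_s)\neq p(\alpha_{s+1})$, meaning the path \emph{crosses} an arc from one endpoint to the other, or $p(\alpha_s)=p(\alpha_{s+1})$ and $\triangle(\alpha_s)\neq\triangle(\alpha_{s+1})$, meaning the path \emph{rotates} one notch counterclockwise around a single marked point. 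Every nonzero path thus decomposes into blocks of rotations around a single marked point, separated by crossings.

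Finally I would exploit the unpunctured hypothesis: every $p\in\marked$ lies on $\partial\Sigma$, so the arcs of $\tau$ incident to $p$ form a \emph{linear} rather than cyclic order. A maximal rotation block around $p$ therefore has length at most $d_p-1$, where $d_p$ counts the arcs of $\tau$ at $p$, and once such a block ends on a boundary-side arc there is no further triangle to continue it. Careful bookkeeping, tracking how long a nonzero path can return to a previously visited triangle, then shows that each interior triangle is traversed at most a bounded number of times (in fact two turns out to suffice), which yields a uniform bound on the length of any nonzero path in terms of $|\tau|$. Since $\qtau$ is a finite quiver, this bound yields finite-dimensionality.

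The hard part will be the combinatorial bookkeeping of the last paragraph: ruling out \emph{infinite winding} of nonzero paths. This step breaks down for punctured surfaces (the once-punctured torus already admits nonzero paths winding indefinitely around the puncture, because the arcs at that puncture are cyclically ordered), so the absence of punctures must be used in an essential way, precisely to interrupt every potential cyclic rotation at a boundary arc.
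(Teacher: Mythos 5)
The paper itself offers no proof of Theorem \ref{thm:unpunct-fin-dim}: it is quoted verbatim from \cite{ABCP}. So I compare your sketch with the standard argument rather than with an internal one. Your first two paragraphs are fine: for unpunctured $\surf$ one indeed has $\qtau=\unredqtau$, the potential is the sum of the oriented $3$-cycles of the interior triangles, the Jacobian ideal is the monomial ideal generated by the length-$2$ subpaths of those $3$-cycles, and a path is nonzero precisely when no two consecutive arrows are consecutive arrows of one interior triangle.

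The gap is in your last two paragraphs, and it is not merely deferred bookkeeping: your taxonomy of surviving compositions is wrong, and correcting it \emph{is} the proof. Given a composable pair $\alpha:i\to j$, $\beta:j\to k$, record at which \emph{end} of $j$ each arrow sits ($\alpha$ arrives at the end of $j$ contained in the angle of $\triangle(\alpha)$ at $p(\alpha)$; $\beta$ departs from the end contained in the angle of $\triangle(\beta)$ at $p(\beta)$; one must speak of ends of arcs rather than marked points because $j$ may be a loop). Checking the orientation convention locally around $j$ shows: if $\beta$ departs from the same end at which $\alpha$ arrives, then $\triangle(\alpha)\neq\triangle(\beta)$ and $\alpha\beta$ survives (your ``rotation''); if $\beta$ departs from the \emph{other} end, then necessarily $\triangle(\alpha)=\triangle(\beta)$, this triangle has three arc sides and is therefore interior, and $\alpha\beta$ is a relation. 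In other words, your ``crossings'' are exactly the relations and never occur in a nonzero path. Hence every nonzero path is a chain of consecutive arrows in the fan of arc-ends at a single marked point $p$; since $\punct=\varnothing$ forces $p\in\partial\Sigma$, that fan is linearly (not cyclically) ordered, so the path has length at most $2|\tau|$ and finite-dimensionality follows at once. As written, your argument treats crossings as legitimate moves and then asserts, without proof, that a nonzero path meets each triangle at most twice; without the local lemma above it is unclear how you would exclude a path that alternates rotations and crossings so as to wind indefinitely around a handle or an annular core. The ``hard part'' you postpone is thus not combinatorial bookkeeping but this single local observation, which also explains your correct remark about punctures: at a puncture the fan of arc-ends is cyclic, so rotations alone already wind forever.
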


The following more general result was simultaneously found by the author.

\begin{thm}[{\cite[Theorem 36]{Labardini1}}]\label{thm:non-empty-bound-fin-dim} If the surface $\surf$ has non-empty boundary, then for every ideal triangulation $\tau$ of $\surf$ the Jacobian algebra $\mathcal{P}\qstau$ is finite-dimensional.
\end{thm}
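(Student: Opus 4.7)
The strategy is to show that there exists an integer $N$ such that every path of length at least $N$ in $\qtau$ belongs to the closure of the Jacobian ideal $J(\stau)$; since $\qtau$ is a finite quiver, this immediately yields $\dim_{\field}\mathcal{P}\qstau<\infty$. To this end I would first analyze the cyclic derivatives explicitly. Each arrow $a\colon i\to j$ of $\qtau$ arises from a common endpoint of two arcs of $\tau$ lying in a (non-self-folded) ideal triangle $\triangle$, and hence $a$ appears in exactly one triangle $3$-cycle summand $\alpha\beta\gamma$ of $\stau$; in addition, $a$ may appear in up to two puncture summands $\widehat{S}^p(\tau,\mathbf{x})$ and possibly in a $\widehat{U}^\triangle$-summand if $\triangle$ is adjacent to two self-folded triangles. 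Cyclic differentiation $\partial_a\stau = 0$ therefore equates, modulo $J(\stau)$, the $2$-path $\beta\gamma$ to a $\field$-linear combination of longer paths winding around punctures at the endpoints of $a$. This is the basic local rewriting rule that I would exploit throughout.

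The core of the argument is to upgrade these local rewrites into nilpotency of the puncture-surrounding cycles in $\mathcal{P}\qstau$. For each puncture $p\in\punct$, let $C_p$ denote the cycle around $p$ appearing in $\stau$. Iterating the triangle-derivative rewriting rules to transport one traversal of $C_p^m$ around the triangulation, and combining with the puncture-derivative relations $\partial_{a_j^p}\stau=0$ (which encode the fact that a puncture cycle read from any starting vertex lies in $J(\stau)$), I would prove that a suitable power $C_p^{N_p}$ lies in $J(\stau)$. The non-empty boundary hypothesis $\partial\Sigma\neq\varnothing$ enters decisively here: at an arc incident to a boundary segment, some expected continuations of $2$-paths simply do not exist in $\tau$, so several terms in the corresponding $\partial_a\stau$ that would otherwise be present are outright absent. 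This seeds a recursion that propagates nilpotency inward from the boundary, essentially extending the Assem--Br\"{u}stle--Charbonneau-Jodoin--Plamondon unpunctured analysis (Theorem \ref{thm:unpunct-fin-dim}) to cover the puncture cycles.

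With $C_p^{N_p}\in J(\stau)$ in hand for every $p$, I would conclude by a purely combinatorial argument on paths in $\qtau$. Choosing $N$ large enough in terms of $|Q(\tau)_0|$, $|\punct|$ and $\max_p N_p$, a pigeonhole count on the vertex sequence of any path $w$ of length $\geq N$, combined with iterated triangle rewriting, would exhibit a representative of $w$ modulo $J(\stau)$ containing some $C_p^{N_p}$ as a factor, whence $w\in J(\stau)$ and we are done.

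The main obstacle I expect is the bookkeeping needed when an arrow simultaneously belongs to a triangle cycle, to two distinct puncture cycles, and possibly to a $\widehat{U}^\triangle$-summand: the derivative then has several terms and iterating the rewrites can increase the number of summands. Controlling this combinatorial explosion requires a carefully chosen "potential function" on paths (for example a weighted combination of distance to the boundary and total winding number around punctures) that strictly decreases under each rewrite, so that the process terminates. It is precisely here that $\partial\Sigma\neq\varnothing$ plays its role, since in the empty-boundary case no such termination function exists and the analogous statement is genuinely more delicate (cf. the discussion in Section \ref{sec:Jacobi-finiteness}).
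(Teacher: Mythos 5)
The paper does not actually prove this statement: Theorem \ref{thm:non-empty-bound-fin-dim} is quoted verbatim from \cite[Theorem 36]{Labardini1}, so there is no in-paper argument to compare yours with, and I can only measure your plan against the known proof. Your plan has the right raw ingredients --- reduce finite-dimensionality to showing that $\maxid^N$ lies in the closed Jacobian ideal, exploit that $\partial_a\stau$ consists of the triangle $2$-path alone (no puncture term) when $a$ is the angle arrow at a boundary marked point, and control the winding of long paths around punctures --- but the two steps that constitute the actual content are asserted rather than proved, and one of them is dubious as stated.

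First, the central claim $C_p^{N_p}\in J(\stau)$ is supported only by ``a recursion that propagates nilpotency inward from the boundary'' together with an unspecified weight function that is supposed to decrease under rewriting. No induction is actually set up, and exhibiting such a decreasing weight is precisely the hard point: the relations rewrite in both directions (a triangle $2$-path $\beta\gamma$ becomes a \emph{longer} path around a puncture via $\partial_\alpha\stau=0$, while an almost-complete puncture cycle becomes a \emph{shorter} triangle $2$-path via the derivative with respect to an arrow of that cycle), so naive iteration can loop, and you have correctly identified --- but not removed --- the obstacle. Second, the concluding pigeonhole step is unsound as formulated: it is not true that every composable pair of arrows of $\qtau$ is either a triangle $2$-path or a consecutive pair in some puncture cycle. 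A pair of consecutive arrows can turn around a boundary marked point, or around the endpoint of the shared arc opposite to the one being wound around; such ``neutral'' junctions appear in no term of $\stau$ (this is exactly why the unpunctured case of Theorem \ref{thm:unpunct-fin-dim} produces gentle algebras whose nonzero long paths wind around boundary marked points). Hence a long path need not, even after rewriting, contain a power of any single $C_p$ as a factor, and the reduction to nilpotency of the $C_p$ is itself a nontrivial lemma. Two smaller inaccuracies: an arrow of $\unredqtau$ is the angle arrow of one triangle at one marked point and therefore occurs in at most \emph{one} puncture summand $\widehat{S}^p(\tau,\mathbf{x})$, not two; and the composite arrows created by the reduction of $\unredqstau$ near self-folded triangles (Remark \ref{rem:reduction-already-done}) are never addressed, although they change which cyclic derivatives are available. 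As it stands the proposal is a programme with the correct ingredients rather than a proof.
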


\begin{ex}[{\cite[Example 8.2]{Labardini2}}] Let $\tau$ be any ideal triangulation of the once-punctured torus (with empty boundary), then the Jacobian algebra $\mathcal{P}\qstau$ is finite-dimensional.
\end{ex}

Trepode and Valdivieso-D\'{i}az have recently shown:

\begin{thm}[\cite{TV}]\label{thm:TV} Let $\surf$ be a sphere with $n\geq 5$ punctures. Then for any ideal triangulation $\tau$ of $\surf$, the Jacobian algebra $\mathcal{P}\qstau$ is finite-dimensional.
\end{thm}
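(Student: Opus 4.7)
The plan is to reduce Theorem~\ref{thm:TV} to the Jacobi-finiteness of a single ideal triangulation per $n\ge 5$, and then verify that reduced case by an explicit analysis of the Jacobian algebra.

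First I would use Proposition~\ref{prop:ideal-triangs-seqs-of-flips} to connect any two ideal triangulations of the $n$-punctured sphere by an ideal flip-sequence, and Theorem~\ref{thm:ideal-flips<->mutations} to see that the QPs along such a sequence are related by successive QP-mutations. The Jacobi-finiteness property is invariant under QP-mutation: the pre-mutation $\widetilde{\mu}_k$ preserves the Jacobian algebra up to isomorphism (the change of variables used to define it is a $\field$-algebra automorphism), and reduction splits off a trivial QP whose Jacobian algebra vanishes, so $\mathcal{P}(Q,S)$ is finite-dimensional if and only if $\mathcal{P}(\mu_k(Q,S))$ is. It therefore suffices to exhibit, for every $n\ge 5$, one ideal triangulation $\tau_n$ of the $n$-punctured sphere for which $\mathcal{P}(Q(\tau_n),S(\tau_n,\mathbf{x}))$ is finite-dimensional.

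For $\tau_n$ I would choose an ``antiprism'' triangulation: mark two of the punctures, say $p_0$ and $p_\infty$, as ``poles'', and arrange the remaining $n-2$ punctures $p_1,\ldots,p_{n-2}$ cyclically along an ``equator''. The arcs of $\tau_n$ are the equatorial segments $e_i=[p_ip_{i+1}]$ (with indices taken cyclically) together with the polar arcs $a_i=[p_ip_0]$ and $b_i=[p_ip_\infty]$. This yields $2(n-2)$ ideal triangles, none of them self-folded, and makes $Q(\tau_n)$ a ``double wheel'' quiver on $n$ vertices. The potential reads
\[
S(\tau_n,\mathbf{x})=\sum_i T_i^{(0)}+\sum_i T_i^{(\infty)}+x_{p_0}\,\Omega_0+x_{p_\infty}\,\Omega_\infty+\sum_{i=1}^{n-2} x_{p_i}\,C_i,
\]
where the $T_i^{(\bullet)}$ are the triangular $3$-cycles coming from the ideal triangles, $\Omega_0$ and $\Omega_\infty$ are the polar cycles of length $n-2$ surrounding the poles, and each $C_i$ is the $4$-cycle surrounding the equatorial puncture $p_i$.

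The main technical step is to show that $\mathcal{P}(Q(\tau_n),S(\tau_n,\mathbf{x}))$ is finite-dimensional. I would argue that each arrow $\alpha$ of $Q(\tau_n)$ can, modulo the Jacobian ideal, be rewritten as a $\field$-linear combination of paths between the endpoints of $\alpha$ of strictly greater length. Cyclic derivatives of the triangular cycles yield basic rewrites of the form $\alpha\equiv -x\,\beta\gamma$ whenever $\alpha\beta\gamma$ is an ideal-triangle cycle, and combining these with the longer relations coming from the cyclic derivatives of the polar and equatorial cycles, iteration produces representatives of $\alpha$ in arbitrarily high powers of the arrow ideal. By the completeness of the path algebra this forces $\alpha$ to vanish in a suitable finite truncation of $\mathcal{P}(Q(\tau_n),S(\tau_n,\mathbf{x}))$, and a standard count on the number of paths of bounded length then yields the desired finite-dimensionality. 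The hard part will be carrying out the rewrite uniformly in $n$ and controlling the interaction between the triangular, polar, and equatorial relations; a clean organizational tool would be a filtration of paths by the number of times each polar cycle has been ``unwound''. An alternative route is induction on $n$, with the base case $n=5$ handled by a direct (possibly computer-assisted) calculation and the inductive step implemented by a local modification of $\tau_n$ near a triangle that introduces one new puncture; the main obstacle in either approach is producing a quantitative bound guaranteeing termination of the rewrite process, which is precisely where the combinatorics of the double wheel becomes delicate.
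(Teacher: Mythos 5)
First, note that the paper does not prove Theorem \ref{thm:TV} at all: it is imported verbatim from Trepode--Valdivieso-D\'{i}az \cite{TV}, so there is no internal proof to compare yours against. Your reduction step is sound and mirrors the way the paper itself uses mutation-invariance in Section \ref{sec:Jacobi-finiteness}: ideal triangulations are connected by ideal flip-sequences (Proposition \ref{prop:ideal-triangs-seqs-of-flips}), ideal flips correspond to QP-mutations (Theorem \ref{thm:ideal-flips<->mutations}), and Jacobi-finiteness is a mutation invariant by \cite[Corollary 6.6]{DWZ1}. Your ``antiprism'' (bipyramid) triangulation is also legitimate: it has $3(n-2)=3n-6$ arcs and $2n-4$ ideal triangles, none self-folded, and the potential you write down is the correct one.

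The genuine gap is the core step, which you yourself flag: you never actually prove that $\mathcal{P}(Q(\tau_n),S(\tau_n,\mathbf{x}))$ is finite-dimensional. Two concrete problems. First, the basic rewriting rule is misstated: the cyclic derivative of $S(\tau_n,\mathbf{x})$ with respect to an arrow $\alpha$ of a triangle cycle $\alpha\beta\gamma$ has the form $\beta\gamma + x_p\,(\text{path of length } d_p-1)$, so what gets rewritten modulo the Jacobian ideal is the length-two path $\beta\gamma$, not the arrow $\alpha$ itself; single arrows are never congruent to strictly longer paths in these algebras (if they were, every arrow would lie in $\bigcap_k(\maxid^k+\overline{J(S)})=\overline{J(S)}$ and the Jacobian algebra would collapse to $R$). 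Second, and more seriously, termination of the rewriting process is precisely the hard content of the theorem: one needs $\maxid^N\subseteq \overline{J(S)}$ for some $N$, i.e.\ that every sufficiently long path can be pushed into arbitrarily high powers of $\maxid$ modulo $J(S)$, and the danger is that after a substitution the resulting paths no longer contain a replaceable length-two subpath completing a triangle. For surfaces with non-empty boundary this is resolved because paths eventually ``reach the boundary'' and die, which is how \cite[Theorem 36]{Labardini1} goes; for closed surfaces there is no such escape, which is exactly why the closed case remained open and required the separate arguments of \cite{TV} and \cite{Ladkani}. Your proposed organizing tool (a filtration by the number of unwindings of the polar cycles) is only named, not constructed, so the proof is incomplete at its decisive point.
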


Ladkani proved the following generalization.

\begin{thm}[{\cite{Ladkani}}]\label{thm:Ladkani} The Jacobian algebra $\mathcal{P}\qstau$ is finite-dimensional for every ideal triangulation $\tau$ of a surface with empty boundary.
\end{thm}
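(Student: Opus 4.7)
The plan is to reduce the theorem, via the flip/mutation compatibility established earlier, to a finite list of case studies, and then to exploit the boundary case (Theorem \ref{thm:non-empty-bound-fin-dim}) together with Theorem \ref{thm:TV} to dispatch all but a "bulk" of closed positive-genus surfaces, which must be handled by an explicit combinatorial analysis.

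First I would invoke the standard consequence of Derksen--Weyman--Zelevinsky's Splitting Theorem (\cite[Theorem 4.6]{DWZ1}): if two QPs are related by a QP-mutation, then one of their Jacobian algebras is finite-dimensional if and only if the other one is. Combined with Theorem \ref{thm:tagged-flips<->mutations} and Proposition \ref{prop:ideal-triangs-seqs-of-flips}, this reduces Theorem \ref{thm:Ladkani} to exhibiting, for each empty-boundary surface $\surf$ allowed in Definition \ref{def:surf-with-marked-points}, a \emph{single} ideal triangulation $\tau_\surf$ for which $\mathcal{P}(Q(\tau_\surf),S(\tau_\surf,\mathbf{x}))$ is finite-dimensional. (For the 5-punctured sphere, where Theorem \ref{thm:tagged-flips<->mutations} is not available, one replaces this reduction step by a finite direct verification covering each of the finitely many mutation classes occurring, or invokes Theorem \ref{thm:TV} directly.)

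Next I would split the allowed empty-boundary surfaces into three cases. When $\surf$ is a sphere with at least 5 punctures, Theorem \ref{thm:TV} is immediate. When $\surf$ is a positive-genus closed surface with exactly one puncture, no ideal triangulation carries a self-folded triangle, and Jacobi-finiteness for at least one such $\tau_\surf$ is already on record (e.g.\ the once-punctured torus computation of \cite[Example 8.2]{Labardini2}, whose argument extends to arbitrary genus by the same kind of explicit rewriting). The substantive remaining case is therefore that of a positive-genus closed surface $\surf$ with at least two punctures. For such a $\surf$ I would choose $\tau_\surf$ to contain, for every puncture $p$, a loop $j_p$ enclosing a once-punctured monogon around $p$ together with its folded side $i_p$, except for one distinguished puncture $p_0$ around which the remaining arrows of $\qtau$ form a single ``big" oriented cycle (such a $\tau_\surf$ exists whenever the number of punctures is at least $2$ and the genus is positive, by the connected-sum construction used in Subsection \ref{subsec:proof-Pop-Thm-empty-boundary-pos-genus}). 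The Jacobian ideal of the associated $\stau$ then contains, for each puncture $p$, a ``cycle-around-$p$" relation obtained as a cyclic derivative, together with the usual triangle relations.

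The main obstacle will be the combinatorial argument that these relations suffice to force $\dim_K \mathcal{P}\qstau < \infty$. Unlike the non-empty-boundary case, long paths in $\qtau$ are not killed by reaching the boundary, so termination of the rewriting procedure must come purely from the combinatorics of the chosen $\tau_\surf$. My strategy here would be to prove, by induction on path length, that every path in $\qtau$ of length exceeding an explicit bound $N=N(\tau_\surf)$ lies in the Jacobian ideal: one uses the puncture-cycle relations to ``unwind'' any subpath that loops around a given puncture too many times, and the triangle relations to compress triangular detours, showing that a path of excessive length always contains a subword equal (modulo the Jacobian ideal) to a strictly shorter path times a scalar. The finiteness of $\punct$ and the absence of ``free'' homotopy classes in $\qtau_\surf$ not already trapped by the potential should then guarantee that only finitely many equivalence classes of paths survive, closing the proof once this bound is combined with the first-paragraph reduction.
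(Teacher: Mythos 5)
The paper does not actually prove Theorem \ref{thm:Ladkani}; it is quoted as an external result of Ladkani \cite{Ladkani} (with the sphere case due to \cite{TV}), so any argument you supply is by definition a different route. Your reduction step is sound: Jacobi-finiteness is a mutation invariant by \cite[Corollary 6.6]{DWZ1}, and combining this with the first assertion of Proposition \ref{prop:ideal-triangs-seqs-of-flips} and with Theorem \ref{thm:ideal-flips<->mutations} (which, unlike Theorem \ref{thm:tagged-flips<->mutations}, carries no restriction excluding the $5$-punctured sphere and is all that is needed here, since the statement concerns only ideal triangulations) does reduce the theorem to exhibiting one Jacobi-finite ideal triangulation per closed surface. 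The sphere case is then covered by Theorem \ref{thm:TV}.

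The genuine gap is in your core combinatorial step for closed surfaces of positive genus. You propose to show that every path of length exceeding some bound $N$ lies in the Jacobian ideal because such a path ``contains a subword equal (modulo the Jacobian ideal) to a strictly shorter path times a scalar.'' If that rewriting were available, an induction on length would show that every path is congruent, modulo the \emph{non-closed} ideal $J_0(\stau)$ generated by the cyclic derivatives, to a linear combination of paths of length at most $N$, so that $R\langle\qtau\rangle/J_0(\stau)$ would already be finite-dimensional; but the Remark closing Section \ref{sec:Jacobi-finiteness} records that this quotient is \emph{infinite}-dimensional whenever $\partial\Sigma=\varnothing$. The relations in fact point the other way: the cyclic derivative of $\stau$ with respect to an arrow occurring in a triangle term equates a path of length two with a non-zero scalar multiple of a long cycle around a puncture, so rewriting replaces short subwords by much longer ones. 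The mechanism that actually works (in \cite[Example 8.2]{Labardini2} and in \cite{Ladkani}) is the opposite limiting argument: one shows that every sufficiently long path is congruent to scalar multiples of paths of arbitrarily large length, hence lies in $J_0(\stau)+\maxid^n$ for every $n$ and therefore in the closed Jacobian ideal, vanishing in the completed Jacobian algebra $\mathcal{P}\qstau$. Carrying this out for an arbitrary closed surface of positive genus with several punctures is precisely the non-trivial content of Ladkani's paper, and the parallel assertion that the once-punctured torus computation ``extends to arbitrary genus by the same kind of explicit rewriting'' is likewise a claim, not a proof. As written, the proposal therefore does not establish the theorem.
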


Since finite-dimensionality of Jacobian algebras is preserved by QP-mutations (cf. \cite[Corollary 6.6]{DWZ1}), a straightforward combination of Theorem \ref{thm:tagged-flips<->mutations} with Theorems \ref{thm:unpunct-fin-dim}, \ref{thm:non-empty-bound-fin-dim}, \ref{thm:TV} and \ref{thm:Ladkani},  yields:

\begin{coro} All the QPs associated to tagged triangulations of the surfaces considered in the present note have finite-dimensional Jacobian algebras.
\end{coro}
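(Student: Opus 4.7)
The proof is a bookkeeping combination of three ingredients already available. First, the cited Theorems \ref{thm:unpunct-fin-dim}, \ref{thm:non-empty-bound-fin-dim}, \ref{thm:TV} and \ref{thm:Ladkani} together guarantee that for \emph{every} ideal triangulation $\tau_0$ of any surface $\surf$ under consideration (the excluded case being only the 5-punctured sphere, plus the degenerate surfaces ruled out from the outset in Definition \ref{def:surf-with-marked-points}), the Jacobian algebra $\mathcal{P}(Q(\tau_0),S(\tau_0,\mathbf{x}))$ is finite-dimensional. Second, Theorem \ref{thm:tagged-flips<->mutations} states that whenever two tagged triangulations $\tau,\sigma$ of $\surf$ are related by the flip of a tagged arc $i$, we have $\mu_i\qstau\simeq\qssigma$ as QPs. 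Third, finite-dimensionality of Jacobian algebras is invariant under right-equivalence and is preserved by QP-mutation by \cite[Corollary 6.6]{DWZ1}.

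The plan is therefore to fix an arbitrary tagged triangulation $\tau$ of $\surf$ and show that $\qstau$ is right-equivalent to the QP obtained from an ideal triangulation by a finite sequence of QP-mutations. Assuming first that $\surf$ is not a once-punctured closed surface, Proposition \ref{prop:ideal-triangs-seqs-of-flips}(4) supplies a flip-sequence $(\tau=\tau_k,\tau_{k-1},\ldots,\tau_1,\tau_0)$ with $\tau_0$ any chosen ideal triangulation. Iterating Theorem \ref{thm:tagged-flips<->mutations} along this sequence yields a right-equivalence
\[
\qstau\simeq \mu_{i_k}\mu_{i_{k-1}}\cdots\mu_{i_1}\bigl(Q(\tau_0),S(\tau_0,\mathbf{x})\bigr),
\]
and since the right-hand side has finite-dimensional Jacobian algebra by the two facts recalled above, so does the left-hand side.

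The only case left is when $\surf$ is a closed surface with exactly one puncture (necessarily of positive genus, since the once-punctured sphere is excluded). Here Proposition \ref{prop:ideal-triangs-seqs-of-flips}(4) does not apply, and the tagged exchange graph has two connected components, distinguished by the weak signature $\epsilon_\tau\in\{\pm 1\}$ at the unique puncture. The component with $\epsilon_\tau=\mathbf{1}$ contains every ideal triangulation of $\surf$, so the previous argument applies verbatim within it using Proposition \ref{prop:ideal-triangs-seqs-of-flips}(1) and Theorem \ref{thm:tagged-flips<->mutations}. For the other component, I would observe that Definition \ref{def:QP-of-tagged-triangulation} builds $\qstau$ from $(Q(\tau^\circ),S(\tau^\circ,\mathbf{x}))$ via the bijection $\tagfunction_{\epsilon_\tau}$ and the insertion of the sign $\epsilon_\tau(p)$ in the puncture-cycle term; when $\epsilon_\tau\equiv -1$ this amounts to a pure relabeling of arrows together with a global sign change in the single puncture-cycle term, which can be absorbed by the $R$-algebra automorphism rescaling one of the arrows in that cycle by $-1$. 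Consequently $\qstau$ is right-equivalent to $(Q(\tau^\circ),S(\tau^\circ,\mathbf{x}'))$ for a suitable choice $\mathbf{x}'$ of scalars, whose Jacobian algebra is finite-dimensional by Theorem \ref{thm:Ladkani}.

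No step is a genuine obstacle: the substance is entirely in the already-proved Theorem \ref{thm:tagged-flips<->mutations} and in the cited Jacobi-finiteness results. The only mildly delicate point is handling the once-punctured closed surfaces, where one has to justify passage from the notched component to an ideal triangulation without invoking Proposition \ref{prop:ideal-triangs-seqs-of-flips}(4); this is purely a matter of reading off Definition \ref{def:QP-of-tagged-triangulation}.
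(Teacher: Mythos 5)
Your proof is correct and follows essentially the same route as the paper's one-line argument: combine Theorem \ref{thm:tagged-flips<->mutations} with the cited Jacobi-finiteness theorems for ideal triangulations, using that QP-mutation and right-equivalence preserve finite-dimensionality of Jacobian algebras. Your explicit treatment of once-punctured closed surfaces---where the tagged exchange graph is disconnected so Proposition \ref{prop:ideal-triangs-seqs-of-flips}(4) does not apply, but the all-notched QPs are, by Definition \ref{def:QP-of-tagged-triangulation}, just vertex-relabelings of $(Q(\tau^\circ),S(\tau^\circ,\mathbf{x}'))$ with the sign absorbed into the scalar attached to the unique puncture---is a genuine detail that the paper's ``straightforward combination'' leaves implicit, and you handle it correctly.
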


\begin{remark} This last corollary does not imply that a Jacobian algebra of the form $\mathcal{P}(\qtau,S(\tau,\mathbf{x}))$ is necessarily isomorphic to the quotient $R\langle\qtau\rangle/J_0(S(\tau,\mathbf{x}))$ (by definition, $J_0(S(\tau,\mathbf{x}))$ is the ideal of $R\langle\qtau\rangle$ generated by the cyclic derivatives of $S(\tau,\mathbf{x})$), nor that $J_0(S(\tau,\mathbf{x}))$ is an admissible ideal of $R\langle\qtau\rangle$ (cf. \cite[Remark 5.2]{CI-LF}). Indeed, if the underlying surface has empty boundary, then $R\langle\qtau\rangle/J_0(S(\tau,\mathbf{x}))$ is infinite-dimensional, while $\mathcal{P}(\qtau,S(\tau,\mathbf{x}))$ is not.  On the other hand, in the non-empty boundary situation, $J_0(S(\tau,\mathbf{x}))$ is always an admissible ideal of $R\langle\qtau\rangle$, and $\mathcal{P}(\qtau,S(\tau,\mathbf{x}))$ is always isomorphic to $R\langle\qtau\rangle/J_0(S(\tau,\mathbf{x}))$ (cf. \cite{ABCP} and \cite[Theorem 5.5]{CI-LF}).
\end{remark}

\end{document}